\newtheorem{thm}{Theorem}
\newtheorem{lem}[thm]{Lemma}
\newtheorem{cor}[thm]{Corollary}
\newtheorem{prop}[thm]{Proposition}
\theoremstyle{definition}
\newtheorem{defn}[thm]{Definition}
\newtheorem{say}[thm]{}
\newtheorem{exmp}[thm]{Example}
\newtheorem{ques}[thm]{Question}    
\newtheorem*{ack}{Acknowledgments}      
\newtheorem{defn-thm}[thm]{Definition--Theorem}  
\newtheorem{defn-lem}[thm]{Definition--Lemma}  
\theoremstyle{remark}
\renewcommand{\c}[0]{{\mathbb C}}  
\renewcommand{\o}[0]{{\mathcal O}} 
\newcommand{\z}[0]{{\mathbb Z}}
\renewcommand{\r}[0]{{\mathbb R}}
\newcommand{\p}[0]{{\mathbb P}}
\newcommand{\q}[0]{{\mathbb Q}}
\newcommand{\map}[0]{\dasharrow}
\newcommand{\qtq}[1]{\quad\mbox{#1}\quad}
\newcommand{\mult}[0]{\operatorname{mult}}
\newcommand{\supp}[0]{\operatorname{Supp}}    
\newcommand{\codim}[0]{\operatorname{codim}}    
\newcommand{\im}[0]{\operatorname{im}}    
\newcommand{\proj}[0]{\operatorname{Proj}}
\newcommand{\sing}[0]{\operatorname{Sing}}    
\newcommand{\ex}[0]{\operatorname{Ex}}    
\newcommand{\diffg}[0]{\operatorname{Diff}^*}
\newcommand{\rdown}[1]{\lfloor{#1}\rfloor}
\newcommand{\onto}[0]{\twoheadrightarrow}
\newcommand{\simq}[0]{\sim_{\q}}
\newcommand{\coeff}[0]{\operatorname{coeff}}
\newcommand{\OO}{\mathrm{O}}
\def\into{\DOTSB\lhook\joinrel\to}
\def\loccoh#1.#2.#3.#4.{H^{#1}_{#2}(#3,#4)}
\DeclareMathAlphabet{\mathchanc}{OT1}{pzc}%
                                {m}{it}
\newcommand{\api}[0]{\operatorname{\hat{\pi}_1}}
\newcommand{\DMR}{{\mathcal{DMR}}}
\newcommand{\D}{{\mathcal{D}}}
\begin{document}
\bibliographystyle{amsalpha}

\today

\title{The dual complex of Calabi--Yau pairs} 
\author{J\'anos Koll\'ar and Chenyang Xu}
\begin{abstract} A log Calabi--Yau  pair consists of a proper variety $X$
and a divisor $D$ on it such that $K_X+D$ is numerically trivial.
A folklore conjecture predicts that the dual complex of $D$ is
homeomorphic to the quotient of a sphere by a finite group. 
The main result of the paper shows that the fundamental group of the
 dual complex of $D$ is a quotient of the fundamental group of the
smooth locus of $X$, hence its pro-finite completion is finite. 
This leads to a positive answer in dimension $\leq 4$.
We also study the dual complex of degenerations of  Calabi--Yau 
varieties. The key technical result we prove is that, 
after a  volume preserving 
birational equivalence, the  transform of $D$ supports an ample divisor.
\end{abstract}

\maketitle

A {\it log Calabi--Yau  pair}, abbreviated as {\it logCY,} is a pair
$(X, \Delta)$ consisting of a proper variety $X$ and an effective
$\q$-divisor  $\Delta$ such that $(X, \Delta)$ is log canonical
and $K_X+\Delta$ is $\q$-linearly equivalent to 0.
Any  Calabi--Yau variety $X$ can be naturally 
identified with  the log Calabi--Yau pair  $(X, 0)$.
At the other extreme, if $X$ is a Fano variety and
$\Delta\simq -K_X$ is an effective divisor then
 $(X, \Delta)$ is also logCY (provided that it is log canonical).

Here we are interested in these Fano--type logCYs, 
especially when $\Delta$ is
``large.''  Being  Fano is not preserved
under birational equivalence, thus it is better to define
 ``large'' without reference to Fano varieties.
There are several natural candidates for this notion; we
were guided by the concepts of  
large complex structure limit and maximal unipotent degeneration
used in Mirror Symmetry.

\begin{defn} Let $(X, \Delta)$ be a log canonical pair of dimension $n$
and $g: (Y, \Delta_Y)\to (X, \Delta)$ a crepant log resolution.
That is, $g_*\Delta_Y=\Delta$, $K_Y+\Delta_Y\simq g^*(K_X+\Delta)$,
$X$ is smooth 
and $\ex(g)\cup\supp\Delta_Y$ is a simple normal crossing divisor.

Note that  $\Delta_Y$ is usually not effective but, since
 $(X, \Delta)$ is log canonical, 
all divisors appear in $\Delta_Y$ with coefficient $\leq 1$.
Let $\Delta_Y^{=1}$ denote the union of all irreducible components
of $\Delta_Y$ whose coefficient equals $1$.

  The combinatorics of $\Delta_Y^{=1}$ is encoded in its {\it dual complex,}
denoted by $\D\bigl(\Delta_Y^{=1}\bigr)$; see Definition \ref{dual.complex.defn}.
By \cite{dkx}   $\D\bigl(\Delta_Y^{=1}\bigr)$ is
independent of the choice of $ (Y, \Delta_Y)$, up-to 
PL-homeomorphism. We call this PL-homeomorphism type the
{\it dual complex} of $(X, \Delta)$ and
denote it by $\DMR(X, \Delta)$.

Note that  $\dim_{\r}\DMR(X, \Delta)\leq \dim X-1$ 
since, on a variety of dimension $n$, at most $n$  irreducible components
of a simple normal crossing divisor meet at a point.
We say that    $(X, \Delta)$ has {\it maximal intersection}
if equality holds.
\end{defn}

By \cite{k-dual2}, every finite simplicial complex 
of dimension $n-1$ appears
as $\D(X, \Delta)$ for some $n$-dimensional  simple normal crossing
pair $(X, \Delta)$. Thus it is
 interesting to understand which algebraic
restrictions on $(X, \Delta)$ have meaningful topological
consequences for $\DMR(X, \Delta)$. 
The  aim of this paper is to study 
the dual complex  of logCY pairs. The main result is the following.

\begin{thm}\label{main.top.thm}
 Let $(X, \Delta)$ be a logCY pair 
and $\DMR(X, \Delta)$ its dual complex. Assume that
$\dim_{\r}  \DMR(X, \Delta)\geq 2$.
Then the following hold.
\begin{enumerate}
\item  $\DMR(X, \Delta)$ 
has the same dimension at every point.
\item $H^i\bigl( \DMR(X, \Delta), \q\bigr)=0$ for
$0<i<\dim_{\r}\DMR(X, \Delta)$.
\item There is a natural surjection
$\pi_1\bigl(X^{\rm sm}\bigr)\onto \pi_1\bigl( \DMR(X, \Delta)\bigr)$.
\item The pro-finite completion $\api \bigl( \DMR(X, \Delta)\bigr)$ is finite. 
\item The cover $\widetilde{\DMR}(X, \Delta)\to \DMR(X, \Delta)$
corresponding to $\api \bigl( \DMR(X, \Delta)\bigr)$ is the
dual complex of a  quasi-\'etale cover
 $(\tilde X, \tilde \Delta)\to (X, \Delta)$.
\end{enumerate}
\end{thm}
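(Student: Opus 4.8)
The plan is to pass to a convenient volume-preserving birational model, read off (1) and (2) from vanishing theorems on that model, and obtain (3)--(5) together from a dictionary between finite covers of the dual complex and quasi-\'etale covers of the pair.

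\emph{Reduction.} By \cite{dkx} the PL-homeomorphism type $\DMR(X,\Delta)$ is a volume-preserving birational invariant, so by the key technical result I may assume $(X,\Delta)$ is a $\q$-factorial dlt logCY pair whose non-klt locus $\supp\rdown{\Delta}$ is the support of an ample divisor $H=\sum a_iD_i$, all $a_i>0$. Then $U:=X\setminus\supp\rdown{\Delta}$ is affine of dimension $n:=\dim X$. Since $\Delta\geq\rdown{\Delta}$ and $\rdown{\Delta}$ dominates a positive multiple of $H$, the divisor $-K_X\simq\Delta$ is big, so $X$ is of Fano type; in particular $\api(X^{\rm sm})$ is finite by the known finiteness results for fundamental groups of Fano-type varieties. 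I will use throughout that, by adjunction, for every lc center $Z$ the pair $(\overline{Z},\Delta_{\overline{Z}})$ (with $\Delta_{\overline{Z}}$ the different) is again logCY, and that the link of the face of $\DMR(X,\Delta)$ attached to $Z$ is PL-homeomorphic to $\DMR(\overline{Z},\Delta_{\overline{Z}})$.

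\emph{Parts (1) and (2).} I would prove these by simultaneous induction on $n$. The complex $\DMR(X,\Delta)$ is connected by Koll\'ar--Shokurov connectedness, and the same argument applied to each adjoint pair shows all the links are connected; by the inductive hypothesis each link is then pure-dimensional, so each star of $\DMR(X,\Delta)$ is a cone over a pure complex, the local dimension is constant, and $\DMR(X,\Delta)$ is pure-dimensional -- this is (1). For (2): the mixed Hodge theory of lc pairs identifies $\widetilde H^{k}(\DMR(X,\Delta),\q)$ with $\operatorname{Gr}^W_0 H^{k+1}_c(U,\q)$; since $U$ is affine of dimension $n$, Artin's vanishing theorem gives $H^{i}_{c}(U,\q)=0$ for $i<n$, whence $\widetilde H^{k}(\DMR(X,\Delta),\q)=0$ for $k<n-1$. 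Together with $\dim_{\r}\DMR(X,\Delta)\leq n-1$ and (1) this proves (2).

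\emph{Parts (3), (4) and (5).} These rest on a correspondence between finite covers of $\DMR(X,\Delta)$ and quasi-\'etale covers of $(X,\Delta)$. One direction is easy: a quasi-\'etale cover $(\tilde X,\tilde\Delta)\to(X,\Delta)$ restricts to finite \'etale covers of the open strata of $\rdown{\Delta}$, hence induces a finite cover $\DMR(\tilde X,\tilde\Delta)\to\DMR(X,\Delta)$; this functoriality makes the map $\pi_1(X^{\rm sm})\onto\pi_1(\DMR(X,\Delta))$ of (3) canonical and gives meaning to (5). The hard direction -- and this is the main obstacle of the whole theorem -- is that \emph{every} finite cover $\Sigma\to\DMR(X,\Delta)$ is induced this way. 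I would construct the corresponding cover in two steps: over a neighbourhood of $\rdown{\Delta}$, reconstruct it from $\Sigma$ using the explicit local structure of a dlt pair along its lc centers together with the finiteness of the local fundamental groups of the klt strata; then extend the resulting cover of $X^{\rm sm}\setminus\rdown{\Delta}$ across the affine $U$ by the Zariski--Nagata purity theorem -- it is exactly the ampleness of $H$ (so that $U$ is affine) and the Fano-type property of $X$ that drive this last step. Granting this, the quasi-\'etale cover attached to $\Sigma$ is connected whenever $\Sigma$ is, which gives the surjectivity asserted in (3); applying the construction to the cover of $\DMR(X,\Delta)$ corresponding to $\api(\DMR(X,\Delta))$ gives (5). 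Finally (4) follows at once: $\pi_1(\DMR(X,\Delta))$ is a quotient of $\pi_1(X^{\rm sm})$, so $\api(\DMR(X,\Delta))$ is a quotient of the finite group $\api(X^{\rm sm})$.
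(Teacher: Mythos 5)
Your overall shape (pass to a volume-preserving model with ample boundary, use vanishing for (2), use a covering-space dictionary for (3)--(5)) matches the paper's strategy, but the proposal has three genuine gaps. First, the reduction is overstated. The key technical result (Theorem \ref{p-bir}) produces a model $(\bar X,\bar\Delta)$ that is in general neither $\q$-factorial nor dlt, and --- outside the maximal intersection case --- it only makes $\bar\Delta^{=1}$ support a divisor that is ample \emph{relative to a fibration} $q:\bar X\to Z$. You cannot in general arrange a dlt logCY model on which $\rdown{\Delta}$ supports a globally ample divisor: for instance $(\p^1,\{0\}+\{\infty\})^3\times Y$ with $Y$ a K3 surface has $\dim_{\r}\DMR=2$ but contains complete fibers $\{pt\}\times Y$ in the complement of any boundary, so that complement is never affine and $X$ is not of Fano type. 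Consequently your claims that $U$ is affine and that $\api(X^{\rm sm})$ is finite ``because $X$ is Fano type'' fail in general; the paper handles this case by descending to the generic fiber of $q$ and describing $\DMR(X,\Delta)$ as a finite-monodromy quotient of the dual complex of the fiber (Lemmas \ref{D.from.fiber.lem} and \ref{top.of.quot.lem}), and proves finiteness of $\api(X^{\rm sm})$ via the dichotomy of Proposition \ref{Xsm.sfg.fin.cor} rather than by bigness of $-K_X$. Your Artin-vanishing argument for (2) is an attractive alternative to the paper's coherent-cohomology computation, but it inherits the same restriction (it needs $U$ affine), so as written it only covers the maximal intersection case.

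Second, $\pi_1(X^{\rm sm})$ is \emph{not} a crepant-birational invariant (Example \ref{pi1.not.birinv.exmp}), so proving (3) on a birational model says nothing about the original $X$ unless you control the direction of the modification; the paper needs Lemma \ref{l-pure} (a surjection $\pi_1(Y^{\rm sm})\onto\pi_1(X^{\rm sm})$ when $X\map Y$ has no exceptional divisors) together with the precise exceptional-divisor control in Corollary \ref{p-bir-cor}.5, and your proposal never addresses this. Third, the engine of (3)--(5) is missing. Zariski--Nagata purity only extends covers across closed subsets of codimension $\geq 2$; it cannot extend a cover from a tubular neighborhood of $\rdown{\Delta}$ (or from $X^{\rm sm}\setminus\rdown{\Delta}$, which is not what your first step produces anyway) to all of $X^{\rm sm}$. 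What is actually needed is a Lefschetz hyperplane theorem for the ample divisor supported on the boundary --- in the paper, Goresky--MacPherson's \cite[Part II, Thm.1.1]{gm-book} via Proposition \ref{p-Lef} and Corollary \ref{l-lef} --- which yields the surjection $\pi_1(X^{\rm sm})\onto\pi_1(\Delta^{=1})$ directly; once (3) is established this way, the lifting of finite covers of $\DMR(X,\Delta)$ to quasi-\'etale covers of $(X,\Delta)$ in (5) is the easy consequence, not the other way around. As written, your ``hard direction'' is an assertion rather than a proof, and the step where ampleness is supposed to enter is attributed to the wrong theorem.
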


Part (1) is a restatements of earlier results; see \cite{k-source}
or  \cite[4.40]{kk-singbook}.
The relationship between the rational homology of the dual complex and
the coherent cohomology of $X$ has been understood
for a long time; thus
 (2) has been known in many cases.  Our main
contribution is to understand the connection between the fundamental group
 of the dual complex and
the fundamental group of the smooth locus   $X^{\rm sm}\subset X$. 
The main conclusion is
(3) which in turn implies  (5).  
The finiteness of $\api\bigl(X^{\rm sm}\bigr)$ follows from \cite{MR3187625, gkp};
it is conjectured that 
$\pi_1\bigl(X^{\rm sm}\bigr)$ is finite.

If $\dim_{\r}  \DMR(X, \Delta)=1$ then $\DMR(X, \Delta)$
is either the 1-simplex $[0,1]$ or ${\mathbb S}^1$. In the latter case
$\api \bigl( \DMR(X, \Delta)\bigr)$ is infinite but usually
$X^{\rm sm}$ has only trivial quasi-\'etale covers.

On a general logCY pair, $\Delta$ contains divisors with
different coefficients, but it turns out that 
$\DMR(X, \Delta) $ is rather special if
not all coefficients are equal to 1.

\begin{prop}\label{frac.coeff.contract.prop}
 Let $(X, \Delta)$ be a logCY pair  such that
 $\Delta$ contains at least one divisor with coefficient $<1$.  Then
\begin{enumerate}
\item either $\DMR(X, \Delta) $ is contractible
\item or $\dim_{\r} \DMR(X, \Delta) \leq \dim_{\c} X-2$.
\end{enumerate}
\end{prop}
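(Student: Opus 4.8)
The plan is to use the minimal model program to reduce to a Mori fiber space and then induct on $\dim X$.

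After replacing $(X,\Delta)$ by a $\q$-factorial dlt modification (a crepant operation, so it changes neither $\DMR(X,\Delta)$ nor the presence of a boundary component of coefficient $<1$, whose birational transform survives), we may assume $X$ is $\q$-factorial and $(X,\Delta)$ is dlt. Set $\Theta:=\rdown{\Delta}=\Delta^{=1}$ and $N:=\Delta-\Theta=\Delta^{<1}$; then $N\geq 0$ is nonzero, $(X,\Theta)$ is $\q$-factorial dlt, $\DMR(X,\Delta)=\D(\Theta)$, and $K_X+\Theta\simq -N$. In particular $K_X+\Theta$ is not pseudo-effective, so a $(K_X+\Theta)$-MMP (say with scaling of $N$) terminates with a Mori fiber space $\phi\colon (X',\Theta')\to Z$, $\dim Z<\dim X$, where $-(K_{X'}+\Theta')$ is $\phi$-ample; writing $N'$ for the birational transform of $N$ we have $N'\simq -(K_{X'}+\Theta')$ effective, nonzero and $\phi$-ample. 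Since the dual complex of the reduced boundary changes only up to homotopy equivalence under the steps of a dlt MMP (\cite{dkx}: flips are isomorphisms in codimension one, and a divisorial contraction of a component $E\subset\Theta$ deletes the vertex $v_E$ and the simplices through it, a homotopy equivalence because by adjunction $-\bigl(K_E+\diff_E(\Theta-E)\bigr)$ is ample over the image of $E$, so the link of $v_E$ is the dual complex of a log Fano–type pair, hence contractible), we obtain $\DMR(X,\Delta)=\D(\Theta)\simeq\D(\Theta')$, and it suffices to analyze $\D(\Theta')$.

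If $\dim Z=0$ then $-(K_{X'}+\Theta')$ is ample, so $(X',\Theta')$ is log Fano; perturbing $\Theta'$ by a general $\q$-divisor in $|-(K_{X'}+\Theta')|_{\q}$ produces a logCY pair with the same reduced boundary and nonzero fractional part, and one shows that $\D(\Theta')$ is empty or contractible — alternative (1). Now suppose $\dim Z\geq 1$ and split $\Theta'=\Theta'_v+\Theta'_h$ into its $\phi$-vertical and $\phi$-horizontal components. Because $(X',\Theta')$ is dlt, each vertical component dominates a prime divisor of $Z$, and $k$ vertical components meet only over the intersection of the corresponding $k$ divisors of $Z$, i.e. in codimension $k$; thus $k\leq\dim Z$ and $\dim\D(\Theta'_v)\leq\dim Z-1$. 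Restricting to a general fiber $F$ of $\phi$, the pair $(F,\Delta'|_F)$ is logCY with $\dim F=\dim X-\dim Z<\dim X$, and its fractional part $N'|_F$ is ample, hence nonzero; by induction $\D(\Theta'|_F)$ is contractible or has dimension $\leq\dim F-2$. In the latter case every simplex of $\D(\Theta')$ is the join of a simplex of $\D(\Theta'_h)$ (of dimension $\leq\dim F-2$, once the horizontal strata that fail to dominate $Z$ are accounted for) with one of $\D(\Theta'_v)$, so $\dim\D(\Theta')\leq(\dim F-2)+(\dim Z-1)+1=\dim X-2$ — alternative (2). In the former case $\D(\Theta')$ is glued over $Z$ from contractible fiberwise dual complexes, and one must deduce that $\D(\Theta')$ itself is contractible — alternative (1).

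The main obstacle is this last step, together with its twin used above: propagating contractibility of the dual complex from the Fano-type fibers of $\phi$ to the total space, and the contractibility of the dual complex of a log Fano pair. Both rest on a Leray-type argument using the rational connectedness of Fano-type varieties, the coherent-cohomology vanishing controlling the rational homology of the dual complex, and Kollár–Shokurov connectedness — the same circle of ideas as Theorem \ref{main.top.thm}. Secondary points requiring care are the $\D$-bookkeeping through the dlt modification and the MMP, and the horizontal strata of $\Theta'$ that do not dominate $Z$, which force a maximal-intersection log Fano structure on a special fiber and are again handled by the log Fano case.
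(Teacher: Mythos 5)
Your opening moves (dlt modification, then the $(K_X+\Delta^{=1})$-MMP with scaling of $\Delta^{<1}$ ending in a Mori fiber space $\phi\colon X'\to Z$) match the paper's, but the argument breaks at the point you yourself flag as ``the main obstacle,'' and that obstacle is not a technicality --- it is the content of the proposition. The concrete gap is the identification $\DMR(X,\Delta)\simeq \D(\Theta')$ on the output of the MMP. Each MMP step is crepant for the full pair $(X,\Delta)$, so Theorem \ref{fkx.thm.thm} gives a PL-homeomorphism $\DMR(X,\Delta)\simeq\DMR(X',\Delta')$; but $(X',\Delta')$ is in general only lc, not dlt, and its log canonical centers need not be strata of $\Theta'=(\Delta')^{=1}$ (for instance, a contracted component of $\Theta$ leaves behind an lc center of codimension $\geq 2$ that $\D(\Theta')$ does not see). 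The paper warns about exactly this: ``it can happen that $\DMR(X_r,\Delta_r^{=1})$ is empty yet $\DMR(X_r,\Delta_r)$ is not.'' Your justification that divisorial contractions change $\D(\Theta)$ only by a homotopy equivalence rests on the assertion that the link of the deleted vertex is the dual complex of a log Fano--type pair and is therefore contractible --- which is the same unproved statement as your base case $\dim Z=0$, so the argument is circular. The contractibility statement you need (Proposition \ref{k-source.thm10.prop}) is a genuine theorem that requires the hypothesis that $\supp\Delta^{=1}$ dominates $Z$, and nothing in your setup guarantees this after the MMP; the fiberwise-to-global propagation in the case $\dim Z\geq 1$ is likewise only asserted.

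The paper closes the gap with its main technical result, Theorem \ref{p-bir-weak-cor} (built on Theorem \ref{p-bir}): one first passes to a crepant model on which $\Delta^{=1}$ fully supports a big and semi-ample divisor, relatively over a base $Z$ with every lc center dominating $Z$, and only then runs the $(X,\Delta^{=1})$-MMP. The full-support hypothesis forces $\supp\Delta_m^{=1}=h^{-1}(\supp\Delta_r^{=1})$ on a dlt modification and that $\supp\Delta_m^{=1}$ dominates $Z$, so Proposition \ref{k-source.thm10.prop} together with \cite[Thm.3]{dkx} yields collapsibility of $\DMR(X,\Delta)$ in the maximal intersection case; the non-maximal case is alternative (2) by definition. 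To repair your induction you would have to prove the log Fano contractibility and the fibration step independently and control the lc centers of the non-dlt pair $(X',\Delta')$; the detour through Theorem \ref{p-bir} exists precisely because the direct route you propose is not known to work.
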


 It is straightforward to write down examples where  $\DMR(X, \Delta)$
is contractible or a sphere. More generally, at least some
quotients of spheres are easy to get; see Section  \ref{sec.examples}.
 This leads naturally to
the following.\footnote{Many people seem to have been aware of this question, among others M.~Gross, S.~Keel. V.~Shokurov,  but we could not find any specific mention in the literature.}

\begin{ques} \label{main.ques}
Let $(X, \Delta)$ be a logCY pair of dimension $n$. Is
 $\DMR(X, \Delta)\simeq {\mathbb S}^{k-1}/G$ 
for some   finite
subgroup of  $G\subset \OO_{k}(\r)$ and $k\leq n$. 
(We use $\simeq$ to denote PL-homeomorphism.)
\end{ques}

The group action may have fixed points, thus in general
$\DMR(X, \Delta)$ is only an orbifold.
LogCY varieties also appear as compactifications of character varieties.
In this context, Question \ref{main.ques}  is studied
 in \cite{2010arXiv1006.3838G, simp-2015}.
We prove the following  in Paragraph \ref{low.dim.pfs}.

\begin{prop} \label{low.dim.prop} The answer to Question \ref{main.ques}
is positive if $\dim X\leq 4$ or if $\dim X\leq 5$ and 
$(X, \Delta)$ is a simple normal crossing pair.
\end{prop}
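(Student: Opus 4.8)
The plan is to argue by induction on $d:=\dim_{\r}\DMR(X,\Delta)$, using throughout the following structural principle. If $D_i$ is an irreducible component of $\Delta^{=1}$ on a $\dlt$ model, crepant adjunction produces a boundary with $K_{D_i}+\Delta_{D_i}\simq (K_X+\Delta)|_{D_i}\simq 0$, so $(D_i,\Delta_{D_i})$ is again a logCY pair, of dimension $\dim X-1$, and the link in $\DMR(X,\Delta)$ of the vertex attached to $D_i$ is $\DMR(D_i,\Delta_{D_i})$; iterating, the link of every $k$-face is the dual complex of a logCY pair of dimension $\dim X-1-k$. Applying the Koll\'ar--Shokurov connectedness theorem to $K+\Delta\simq 0$ shows every such link is connected. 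When $d\le 1$ this already pins $\DMR(X,\Delta)$ down: it is a point $=\mathbb{S}^0/\z_2$, two points $=\mathbb{S}^0$, the interval $[0,1]=\mathbb{S}^1/\z_2$, or $\mathbb{S}^1$, which disposes of $\dim X\le 2$.

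For the inductive step, assume the statement for all smaller $\DMR$-dimensions. By the inductive hypothesis every vertex link is $\mathbb{S}^{d-1}/G$ for a finite $G$; for $d-1\le 3$ such a quotient is a compact surface or $3$-orbifold (possibly with boundary), and in particular its cone is a $\q$-homology manifold, so $\DMR(X,\Delta)$ is a compact $\q$-homology $d$-manifold, at worst with quotient (orbifold) singularities along some lower faces, with boundary precisely along faces whose link is one of the ``bordered'' lower-dimensional models. Theorem \ref{main.top.thm}(1) (purity) shows the complex is pure $d$-dimensional; Proposition \ref{frac.coeff.contract.prop} handles coefficients $<1$ (then $\DMR(X,\Delta)$ is contractible, unless $d\le\dim X-2$ which only feeds the induction in lower dimension), while in the all-coefficients-one case an adjunction/purity computation — the link of a codimension-one face is the dual complex of a $0$-dimensional logCY pair, which under this hypothesis and purity must be two points, not one — shows no codimension-one face is free, so $\DMR(X,\Delta)$ is boundaryless. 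Combining Theorem \ref{main.top.thm}(2) with $\q$-Poincar\'e(--Lefschetz) duality, $H_*\bigl(\DMR(X,\Delta),\q\bigr)$ is that of $\mathbb{S}^d$ (closed, $\q$-orientable case) or of a point (otherwise).

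Now invoke Theorem \ref{main.top.thm}(4)--(5): $G:=\api\bigl(\DMR(X,\Delta)\bigr)$ is finite and the associated cover $\widetilde{\DMR}\to\DMR(X,\Delta)$ is itself the dual complex of a logCY pair, hence inherits all of the above structure while having trivial profinite $\pi_1$. For $d=2$ the classification of surfaces finishes the proof: $\widetilde{\DMR}$ is $\mathbb{S}^2$ or a disk, so $\DMR(X,\Delta)\simeq\mathbb{S}^2/G$. For $d=3$, residual finiteness of $3$-(orbifold-)fundamental groups — a consequence of Perelman's geometrization — upgrades ``trivial profinite completion'' to ``simply connected'', the Poincar\'e conjecture (together with: a compact contractible $3$-manifold has $\mathbb{S}^2$ boundary) gives $\widetilde{\DMR}\simeq\mathbb{S}^3$ or $\mathbb{D}^3$, and Perelman's elliptization theorem (in its orbifold form) lets the deck action of $G$ be taken orthogonal, so $\DMR(X,\Delta)\simeq\mathbb{S}^3/G$ with $G\subset\OO_4(\r)$; this settles $\dim X\le 4$. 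For $\dim X\le 5$ with $(X,\Delta)$ simple normal crossing we run the same scheme with $d\le 4$: $\DMR(X,\Delta)$ is then a closed $\q$-homology $4$-manifold that is a $\q$-homology $4$-sphere, and Freedman's classification of simply connected topological $4$-manifolds gives $\widetilde{\DMR}\simeq\mathbb{S}^4$, hence $\DMR(X,\Delta)\simeq\mathbb{S}^4/G$ — provided $\widetilde{\DMR}$ is simply connected.

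I expect that last provision, together with the orbifold bookkeeping, to be the main obstacle, and the reason the statement stops at $\dim X\le 4$ (or $\le 5$ in the snc case). In dimension $3$ one must genuinely work with $\q$-homology manifolds and orbifolds rather than manifolds — $\mathbb{S}^2/G$ and $\r\p^2$-type vertex links create real singular points — so the geometrization results must be quoted in their orbifold versions and one must check that the deck transformations respect the orbifold structure before orthogonalising. In dimension $4$ the serious issue is that trivial profinite completion no longer forces trivial $\pi_1$ (there exist finitely presented superperfect groups with no finite quotients, and they occur as $4$-manifold groups), so one needs an independent argument that $\pi_1\bigl(\DMR(X,\Delta)\bigr)$ is already finite. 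This is exactly where the simple normal crossing hypothesis and the dimension bound enter: via the surjection of Theorem \ref{main.top.thm}(3) from $\pi_1(X^{\rm sm})=\pi_1(X)$, combined with finiteness of $\pi_1(X)$ in this range — which can be extracted, through the volume-preserving birational modifications of the paper that reduce $(X,\Delta)$ to the Fano-type situation, from the known triviality of the fundamental group of a smooth rationally connected variety.
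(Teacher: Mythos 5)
Your overall strategy is the same as the paper's (Paragraph \ref{low.dim.pfs}): dimension induction using the fact that links in $\DMR(X,\Delta)$ are dual complexes of lower-dimensional adjoint logCY pairs, the rational-homology-sphere property, residual finiteness of surface and $3$-manifold groups combined with Theorem \ref{main.top.thm}(4) to upgrade finite profinite completion to finite $\pi_1$, the Poincar\'e conjecture, Freedman in the top case, and the snc hypothesis feeding simple connectivity through Theorem \ref{main.top.thm}(3). The one real difference in route --- deducing $H^d(\DMR,\q)=\q$ from Poincar\'e duality on a $\q$-homology manifold rather than from the coherent-cohomology computation of Paragraph \ref{top.coh.say} --- is harmless for the dimensions at hand, since simple connectivity plus Theorem \ref{main.top.thm}(2) already suffices.

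There is, however, a genuine gap in the execution. You keep only the structure of a ``$\q$-homology $d$-manifold with orbifold singularities,'' because your inductive hypothesis produces links of the form ${\mathbb S}^{d-1}/G$ with $G$ possibly acting with fixed points (and, worse, possibly acting freely but nontrivially). This is not enough for the classification theorems you invoke. The decisive failure is at $d=4$ (the $\dim X\le 5$ snc case): Freedman's theorem applies to honest topological $4$-manifolds, and a $\q$-homology $4$-manifold need not be one --- the cone over a lens space, or over the Poincar\'e homology sphere ${\mathbb S}^3/I^*$ (which has trivial \emph{profinite} $\pi_1$, so is not excluded by Theorem \ref{main.top.thm}(4) alone), is a $\q$-homology $4$-manifold with a non-manifold point. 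So before quoting Freedman you must show that every vertex link is an honest ${\mathbb S}^3$, i.e.\ that the links themselves are simply connected, not merely $\DMR(X,\Delta)$. This is exactly what the paper arranges: for $\dim X\le 4$ it first reduces to $K_X+\Delta\sim 0$ with $\Delta=\Delta^{=1}$ (index-one cover), so that by Paragraph \ref{delt.defn.say} adjunction again yields reduced Cartier boundaries and the induction returns genuine spheres at every link; and in the snc case the iterated adjoint pairs are again snc with smooth, rationally connected, hence simply connected components, so Theorem \ref{pi1Xns.to.pi1D.thm.mi} makes $\DMR(X,\Delta)$ \emph{and all of its links} simply connected. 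Without this step your argument establishes only that $\DMR(X,\Delta)$ is a $\q$-homology sphere quotient, not that it is PL-homeomorphic to ${\mathbb S}^{k-1}/G$. (In dimensions $d\le 3$ you are rescued by the accident that $2$- and $3$-dimensional spherical orbifold quotients have manifold underlying spaces, but you should say so; and the orthogonalization of the residual finite group action in dimension $3$ does require orbifold geometrization, which you correctly flag.)
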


In many questions involving Mori's program, the 3  and 4 dimensional cases
are good indicators of the general situation. 
However, the proof of Proposition \ref{low.dim.prop}
relies on several special low dimensional topological facts, thus
it gives only very weak evidence for the general problem.
We do not see any good heuristic reason why the 
answer to  Question \ref{main.ques} should be affirmative.
From the technical point of view, 
at least 3  problems remain to be settled.
\begin{itemize}
\item Finiteness of  the fundamental group of $\DMR(X, \Delta)$.
We do not know how to prove it but Theorem \ref{main.top.thm}.3
reduces it to the finiteness of
$\pi_1\bigl(X^{\rm sm}\bigr)$.  
\item Torsion in the integral homology of $\DMR(X, \Delta)$.
Our methods do not say anything about it.
\item Starting in dimension 5 we have to deal with the possibility
that  $\DMR(X, \Delta)$ is singular  but 
$X$ itself has no obvious quasi-\'etale covers.
This seems to us the most likely approach to construct a counter example
to Question \ref{main.ques}.
\end{itemize}

It is also very unclear which triangulations of a sphere can be
realized as dual complexes of logCY pairs. This is quite hard even in dimension 2; see \cite{liu-dodecahedron} for partial results.  



\begin{say}[Degenerations of Calabi--Yau varieties]\label{CY.deg.1.say}
Studying degenerations of Calabi--Yau varieties naturally leads
to log Calabi--Yau pairs. Let $g:Y\to {\mathbb D}$ be a
CY-degeneration over the unit disk ${\mathbb D}$. That is, $g$ is proper,
$K_{ Y}\simq 0$ and  $( Y, Y_0)$ is dlt
where  $Y_0$ is the central fiber.
Thus the fiber $Y_t$ is a Calabi--Yau variety for $t\neq 0$ and
the central fiber is a union of logCY pairs
$(X_i, \Delta_i)$ where $\Delta_i$ is the intersection of
 $X_i\subset Y_0$ with the other irreducible components.
We are interested in the dual complex of the central fiber
$\D(Y_0)$ especially when the following two conditions are satisfied
\begin{enumerate}
\item The general fiber $Y_t$ is
an $n$-dimensional   Calabi--Yau variety in the strict sense, that is,  
$Y_t$ is simply connected and 
$h^i(Y_t, \o_{Y_t})=0$ for $0<i<n$.
\item  $\dim\D(Y_0)=n$. 
This is a combinatorial version of the 
{\it large complex structure limit} or {\it maximal unipotent degeneration}
conditions.
\end{enumerate}
\end{say}

\begin{ques} \label{degen.ques}
Let $g:Y\to {\mathbb D}$ be a 
CY-degeneration of relative dimension $n$ 
satisfying the conditions (\ref{CY.deg.1.say}.1--2). 
Is $\D(Y_0)\simeq {\mathbb S}^n$?
\end{ques}

For $n=2$ a positive answer is given by \cite{MR0506296};
the general case is proposed in \cite{MR1882331}. 
It is easy to see that $\D(Y_0)$ is a simply connected rational homology sphere
but it is not clear that $\D(Y_0)$ is a manifold. 
Using Theorem \ref{main.top.thm},
we prove the following  in Paragraph \ref{low.dim.deg.prop.pf}.

\begin{prop} \label{low.dim.deg.prop} The answer to Question \ref{degen.ques}
is positive if $n\leq 3$ or if $n\leq 4$ and
the central fiber is a simple normal crossing divisor.
\end{prop}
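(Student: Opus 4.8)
The plan is to reduce the statement, via the general facts about $\D(Y_0)$ recalled above, to the Poincar\'e conjecture in low dimensions. First, since $(Y,Y_0)$ is dlt with $K_Y+Y_0\simq 0$, the complex $\D(Y_0)$ is the dual complex of a (relative) logCY pair; by \cite{dkx} it is an $n$-dimensional pseudomanifold, and the connectedness theorem for dual complexes of dlt logCY pairs, applied to the surface strata of $Y_0$, shows that the link of every codimension-$2$ face of $\D(Y_0)$ is a single circle, so $\D(Y_0)$ is moreover a \emph{normal} pseudomanifold. We are given that it is simply connected and a $\q$-homology $n$-sphere, hence also $\q$-orientable. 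The aim is to upgrade this to ``combinatorial $n$-manifold'' --- equivalently, to show every vertex link is a PL $(n-1)$-sphere --- after which the Poincar\'e theorem concludes.

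The topological core is a lemma in dimensions $\le 3$: a closed normal pseudomanifold $P$ of dimension $d\le 3$ that is simply connected and a $\q$-homology $d$-sphere is PL-homeomorphic to ${\mathbb S}^d$. For $d\le 2$ this is the classification of surfaces. For $d=3$: $\q$-orientability forces every vertex link to be a closed orientable surface $\Sigma_{g_v}$; removing small open cones around the vertices with $g_v\ge 1$ leaves a compact oriented $3$-manifold with boundary $\bigsqcup_v\Sigma_{g_v}$, and Mayer--Vietoris together with the ``half lives, half dies'' principle gives $\dim_\q H_2(P;\q)\ge\sum_v g_v$; thus $H_2(P;\q)=0$ forces every $g_v=0$, so $P$ is a closed simply connected $3$-manifold and $P\simeq{\mathbb S}^3$ by Perelman (and Moise, to pass from homeomorphism to PL-homeomorphism). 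This lemma genuinely fails in dimension $4$ --- the suspension of ${\mathbb R}{\mathbb P}^3$ is a simply connected $\q$-homology $4$-sphere normal pseudomanifold which is not a manifold --- which is why an extra hypothesis appears for $n=4$.

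For $n=1$ the question is vacuous and for $n=2$ it is \cite{MR0506296}. For $n=3$ one applies the lemma directly to $P=\D(Y_0)$. Now let $n=4$ with $Y_0$ simple normal crossing. The link of the vertex attached to a component $X_i\subset Y_0$ is $\DMR(X_i,\Delta_i)$; being a vertex link of the normal pseudomanifold $\D(Y_0)$ it is itself a closed normal pseudomanifold, of dimension $\dim X_i-1=3$ since $\D(Y_0)$ is pure-dimensional, and $\q$-orientable because $\D(Y_0)$ is. Since $Y_0$ is snc, $X_i$ is smooth, and being a component of a maximal-intersection degeneration it is rationally connected, so $\pi_1(X_i^{\rm sm})=\pi_1(X_i)=1$; then Theorem \ref{main.top.thm}.3 gives that $\DMR(X_i,\Delta_i)$ is simply connected and Theorem \ref{main.top.thm}.2 that it is a $\q$-homology $3$-sphere. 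The lemma then yields $\DMR(X_i,\Delta_i)\simeq{\mathbb S}^3$ for every $i$, so $\D(Y_0)$ is a combinatorial $4$-manifold; it is simply connected with $H_2=H_3=0$ (Poincar\'e duality and the $\q$-homology sphere hypothesis), hence a homotopy $4$-sphere, and one finishes by the $4$-dimensional Poincar\'e theorem. (Compared with Question \ref{main.ques}: the snc hypothesis is what lets us invoke rational connectedness of $X_i$ and Theorem \ref{main.top.thm}.3 to exclude the quotient-of-sphere vertex links that Proposition \ref{low.dim.prop} would otherwise allow.)

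I expect the main difficulty to be exactly this passage from ``pseudomanifold'' to ``combinatorial manifold'': one gets for free that $\D(Y_0)$ is a simply connected $\q$-homology sphere pseudomanifold, but controlling its vertex links is the real work, and for $n=4$ it forces one to prove that the $3$-dimensional links $\DMR(X_i,\Delta_i)$ are genuine PL $3$-spheres --- not merely spherical space forms --- which is precisely where the snc hypothesis (smoothness and rational connectedness of $X_i$, hence $\pi_1(X_i^{\rm sm})=1$) enters and feeds the dimension-$3$ case back in. A secondary subtlety is that the dimension-$4$ Poincar\'e statement is available only in the topological category, so the closing step for $n=4$ is less clean than the purely PL argument for $n\le 3$.
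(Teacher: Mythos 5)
Your argument is correct, and its skeleton is the same as the paper's: the link of the vertex $[X_i]\in\D(Y_0)$ is $\DMR(X_i,D_i)$ for the logCY pair $(X_i,D_i)$, one shows $\D(Y_0)$ is a manifold by controlling these links, and one closes with the low-dimensional Poincar\'e theorems (Perelman for $n=3$ and for the $3$-dimensional links, Freedman for $n=4$). Where you genuinely differ is in how the links are shown to be spheres. The paper simply imports the classification from the proof of Proposition \ref{low.dim.prop}: for $n\le 3$ the vertex links are $2$-manifolds with $H^{2}(\cdot,\q)=\q$, hence ${\mathbb S}^2$, and in the snc case for $n=4$ they are simply connected $3$-manifold rational homology spheres, hence ${\mathbb S}^3$. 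You instead prove a self-contained topological lemma --- a simply connected $\q$-homology-sphere normal pseudomanifold of dimension $\le 3$ is a PL sphere --- deducing orientability of the vertex links from the global $\q$-orientation of $\D(Y_0)$ and killing their genera with the half-lives-half-dies inequality. This cleanly separates the topology from the algebraic geometry: you only need normality of the pseudomanifold (i.e.\ that the surface strata of $Y_0$ carry connected cycles of rational curves, via connectedness as in Paragraph \ref{disconnect.say}) rather than the orientability statement $H^{\rm top}(\DMR,\q)=\q$ for each individual link, at the price of an extra $3$-manifold argument; both routes ultimately rest on Perelman and, for $n=4$, on the same use of Theorem \ref{main.top.thm}.3 and rational connectedness of the smooth components to make the links simply connected. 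One caveat: you take as an input that $\D(Y_0)$ is a simply connected rational homology sphere. The paper merely asserts this before the proposition; the actual justification --- $h^i(Y_0,\o_{Y_0})=h^i(Y_t,\o_{Y_t})$ via Du~Bois singularities for the homology, and the surjections $\pi_1(Y\setminus Y_0)\onto\pi_1(Y_0)\onto\pi_1\bigl(\D(Y_0)\bigr)$ combined with the simple connectivity of $Y_t$ and the existence of a section of $Y\to{\mathbb D}$ for the fundamental group --- is carried out inside the paper's proof of this very proposition, so your write-up should include it rather than defer to it.
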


\medskip

\begin{ack} We thank M.~Gross, A.~Levine, J.~Nicaise, S.~Payne, N.~Sibilla 
and C.~Simpson for
comments, discussions,  references and J.~M\textsuperscript{c}Kernan
for sharing with us early versions of  \cite{BMSZ}. We also want to thank the anonymous referee for many helpful remarks. 

Partial financial support  to JK  was provided  by  the NSF under grant number
 DMS-1362960.
Partial financial support  to CX  was provided  by   
The National Science Fund for Distinguished Young
Scholars. A large part of this work was done while CX 
enjoyed  the inspiring environment at the Institute for Advanced Studies.
\end{ack}

\section{Volume preserving maps}

For logCY pairs,
volume preserving maps, also called 
crepant birational maps, form the most important subclass of
birational equivalences.

\begin{defn}\label{crep.eq.def.1}
Let 
$(X_i, \Delta_i)$ be normal pairs. A proper, birational morphism
$g:(X_1, \Delta_1)\to (X_2, \Delta_2)$ is called 
{\it crepant} if
$g_*(\Delta_1)=\Delta_2$ and
$g^*\bigl(K_{X_2}+\Delta_2\bigr)\simq K_{X_1}+\Delta_1$.
An arbitrary birational map
$g:(X_1, \Delta_1)\map (X_2, \Delta_2)$ between proper pairs is called 
{\it  crepant} if it can be factored as
$$
\begin{array}{rcl}
& (Y,\Delta_Y) & \\
 p_1\swarrow && \searrow p_2 \\
(X_1, \Delta_1)  &\stackrel{g}{\map}&  (X_2,\Delta_2),
\end{array}
\eqno{(\ref{crep.eq.def.1}.1)}
$$
where the $p_i$ are proper, birational, crepant  morphisms. In characteristic 0
this is equivalent to having 
 a  common crepant log resolution. 
(If  the $X_i$ are not proper, the above definition still works and 
defines {\it proper and crepant} birational maps. Note that  $g$
itself is not proper, so the terminology is somewhat confusing.)

The main requirement is the  natural linear equivalence  
$$
p_1^*\bigl(K_{X_1}+\Delta_1\bigr)\simq  p_2^*\bigl(K_{X_2}+\Delta_2\bigr).
\eqno{(\ref{crep.eq.def.1}.2)}
$$
 A proper, crepant birational map
$g:(X_1, \Delta_1)\map (X_2, \Delta_2)$ between log canonical pairs is called 
{\it thrifty} if there are closed subsets
$Z_i\subset X_i$ of codimension $\geq 1$ that do not contain any of the
log canonical centers of the $(X_i, \Delta_i)$ such that
$g$ restricts to an isomorphism
$X_1\setminus Z_1\cong X_2\setminus Z_2$.

If  $(X, \Delta)$ is logCY then a global section of
$\o_X\bigl(mK_X+m\Delta\bigr)$ determines a natural volume form
on $X\setminus\supp\Delta$, up to scalar. Then a map
$g:(X_1, \Delta_1)\map (X_2, \Delta_2)$
is crepant birational iff it is {\it volume preserving}, up to a scalar.
\end{defn}

It is important to note that in (\ref{crep.eq.def.1}.1)
usually one can not choose
$Y$ such that $\Delta_Y$ is effective, not even if we allow
$Y$ to be singular.
However, as we see next, such a choice of $Y$ is possible if we allow the
$p_i$ to be rational contractions.

\begin{defn}\label{crep.eq.def.2} Let $X_i$ be normal, proper varieties. 
 A birational map $g:X_1\map X_2$ is called a
{\it  contraction} if  $g^{-1}: X_2\map X_1$
does not have any exceptional divisors.
Observe that $g$ is an {\it isomorphism in codimension 1} iff both $g$ and $g^{-1}$ are contractions.  
(Note that a birational {\em morphism} is always a contraction.
For a birational {\em map} there does not seem to be a generally
accepted notion of contraction; the above
definition is natural and quite useful.)

Let $g:(X_1, \Delta_1)\map (X_2, \Delta_2)$ 
be a birational contraction that is crepant.
Then
$g_*(\Delta_1)=\Delta_2$ and
$g^*\bigl(K_{X_2}+\Delta_2\bigr)\simq K_{X_1}+\Delta_1$.
The converse is usually not true. 
(For example,  a flop is crepant birational but a flip is not.)  
However, if the $ (X_i, \Delta_i)$ are logCY then 
the linear equivalence (\ref{crep.eq.def.1}.2)
is automatic and the converse holds. 
\end{defn}

The following lemma gives several useful ways of factoring a
crepant birational map.
 In the logCY case, a detailed  understanding of volume preserving maps
is given in \cite{cor-kal}, which also suggested to us
(\ref{factoring.crep.maps}.4). 

\begin{lem} \label{factoring.crep.maps}
Given two proper, log canonical pairs $(X_i, \Delta_i)$, 
the following are equivalent.
\begin{enumerate}
\item There is a crepant birational map
$g:(X_1, \Delta_1)\map (X_2, \Delta_2)$.
\item  There is a log canonical  pair  $(Y, \Delta_Y)$
and crepant, birational contraction maps
$$
(X_1, \Delta_1)\stackrel{p_1}{\dashleftarrow} 
(Y,\Delta_Y)\stackrel{p_2}{\map}(X_2, \Delta_2).
$$
\item There is  a pair  $(Y, \Delta_Y)$
and crepant, birational contraction maps
$$
(X_1, \Delta_1)\stackrel{p_1}{\leftarrow} 
(Y,\Delta_Y)\stackrel{p_2}{\map}(X_2, \Delta_2),
$$
where  $(Y, \Delta_Y)$ is  $\q$-factorial, dlt and $p_1$ is a morphism.
\item There are $\q$-factorial, dlt pairs  $(X'_i, \Delta'_i)$
and crepant, birational  maps
$$
(X_1, \Delta_1)\stackrel{p_1}{\leftarrow} (X'_1, \Delta'_1)
\stackrel{\phi}{\dashrightarrow}(X'_2, \Delta'_2)
\stackrel{p_2}{\to}(X_2, \Delta_2),
$$
where the $p_i$ are morphisms, $\phi$ is 
crepant birational, thrifty  and an isomorphism in codimension 1.  
\end{enumerate}
\end{lem}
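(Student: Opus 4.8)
The plan is to prove the chain of implications $(4)\Rightarrow(3)\Rightarrow(2)\Rightarrow(1)$ as the easy direction, and then $(1)\Rightarrow(4)$ as the substantive one. For the easy direction: $(4)\Rightarrow(3)$ is immediate by taking $(Y,\Delta_Y)=(X_1',\Delta_1')$ and noting that a composition of a crepant isomorphism in codimension $1$ with a crepant morphism is a crepant birational contraction; $(3)\Rightarrow(2)$ is trivial, forgetting that $p_1$ is a morphism and that $Y$ is $\q$-factorial dlt; and $(2)\Rightarrow(1)$ follows since a composition $p_2\circ p_1^{-1}$ of crepant birational contractions is crepant birational (one checks the defining linear equivalence $(\ref{crep.eq.def.1}.2)$ on a common resolution of $Y$ dominating both $X_i$).

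The real work is $(1)\Rightarrow(4)$. Starting from a crepant birational map $g:(X_1,\Delta_1)\map (X_2,\Delta_2)$, pick by definition a common crepant log resolution $(W,\Delta_W)$ with crepant birational \emph{morphisms} $q_i:(W,\Delta_W)\to (X_i,\Delta_i)$; here $W$ is smooth, $\Delta_W$ may fail to be effective, but all coefficients of $\Delta_W$ are $\leq 1$. The first step is to pass to an effective boundary: run a $(K_W+\Delta_W^{+})$-MMP, where $\Delta_W^{+}=\max\{\Delta_W,0\}$ (or, more carefully, add back the $q_1$-exceptional divisors that appear negatively with coefficient forcing them up to $1$, so that we work with a dlt pair whose boundary is effective and which is still crepant over, say, $X_1$). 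Concretely, I would first extract over $X_1$ all the divisors needed to make $(X_1',\Delta_1')$ a $\q$-factorial dlt \emph{crepant} model of $(X_1,\Delta_1)$ with $\Delta_1'$ effective and $p_1:(X_1',\Delta_1')\to(X_1,\Delta_1)$ a morphism — this is a standard dlt-modification, available in characteristic $0$ by \cite{kk-singbook}. Do the same on the other side to get $p_2:(X_2',\Delta_2')\to(X_2,\Delta_2)$.

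Now $\phi:(X_1',\Delta_1')\map(X_2',\Delta_2')$ is crepant birational between $\q$-factorial dlt pairs. The core step is to run a suitable MMP to turn $\phi$ into an isomorphism in codimension $1$ that is moreover thrifty. Since $\phi$ is crepant, any divisor appearing on $X_1'$ but contracted by $\phi$ (or vice versa) is a log canonical place of both pairs; the idea is to run a $(K_{X_1'}+\Delta_1')$-MMP over a common resolution — but $K+\Delta$ is $\q$-linearly trivial on both sides, so instead one runs an MMP with scaling of a suitable ample, or runs the MMP to make the two models isomorphic in codimension $1$ as in the proof that crepant birational maps between minimal models factor through flops. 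The output is a pair $(X_1'',\Delta_1'')$, still $\q$-factorial dlt and crepant over both $X_i$, with the induced map to $X_2'$ (after possibly replacing $X_2'$ by the analogous output) an isomorphism in codimension $1$. Thriftiness then follows: an isomorphism in codimension $1$ between dlt pairs automatically avoids all lc centers away from a closed subset of codimension $\geq 1$ containing no lc center, because the lc centers of a dlt pair are exactly the strata of $\Delta^{=1}$ and these are determined in codimension $1$ by the divisorial part, which is preserved.

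The main obstacle is the MMP step: one must ensure that the MMP used to achieve "isomorphism in codimension $1$" terminates and stays within the dlt, $\q$-factorial, crepant class. Because $K+\Delta$ is numerically trivial on the relevant spaces, the naive $(K+\Delta)$-MMP does nothing, so one genuinely needs an MMP with scaling (of an ample divisor pulled back appropriately), and must invoke termination results for such MMPs for dlt pairs in characteristic $0$ \cite{kk-singbook}; alternatively one argues, following the structure of \cite{cor-kal} which the authors cite as the source of $(\ref{factoring.crep.maps}.4)$, via the Sarkisov-type / $2$-ray game description of volume preserving links. I would present the argument in the form: reduce to $\q$-factorial dlt crepant models on both sides (dlt modification), then cite the factorization of crepant birational maps between dlt Calabi--Yau-type pairs through a sequence of crepant flops to get the codimension-$1$ isomorphism, and finally observe thriftiness is automatic in the dlt setting.
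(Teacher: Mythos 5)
Your easy chain $(4)\Rightarrow(3)\Rightarrow(2)\Rightarrow(1)$ matches the paper, but your $(1)\Rightarrow(4)$ has a genuine gap exactly at the step you leave vague. Taking independent dlt modifications $p_i:(X_i',\Delta_i')\to(X_i,\Delta_i)$ does not coordinate which divisors are extracted on the two sides: a divisor $E\subset X_2$ contracted by $\phi^{-1}$ with $\coeff_E\Delta_2\in(0,1)$ has $a(E,X_1,\Delta_1)\in(-1,0)$, so a dlt modification of $(X_1,\Delta_1)$ (which only extracts discrepancy $-1$ divisors) will \emph{not} extract it, yet $E$ must live on $X_1'$ for $\phi$ to be an isomorphism in codimension $1$ --- and no subsequent MMP can create it, since MMP only contracts divisors. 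Your proposed fix, ``run an MMP with scaling as in the factorization of crepant maps between minimal models through flops,'' does not apply: the lemma is stated for arbitrary proper lc pairs, so $K+\Delta$ is neither nef nor numerically trivial (your appeal to ``$K+\Delta$ is $\q$-linearly trivial on both sides'' imports a logCY hypothesis that is not there), and two crepant dlt models of an lc pair need not be isomorphic in codimension $1$. Finally, your claim that thriftiness is automatic for a codimension-$1$ isomorphism of dlt pairs is unjustified: the lc centers include strata of $\Delta^{=1}$ of codimension $\geq 2$, and a small modification can fail to be an isomorphism at the generic point of such a stratum; ``determined in codimension $1$'' only controls the divisorial centers.

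The paper closes both gaps with one construction. Starting from a common log resolution $\pi_i:(Y',\Delta')\to(X_i,\Delta_i)$, it splits $\Delta'=\Delta''+\Delta_R$, where $\supp\Delta''$ consists of the birational transforms of the $\Delta_i$, the divisors exceptional for exactly one $\pi_i$, and the coefficient-$1$ divisors, and lets $F$ be the (reduced) sum of the remaining exceptional divisors. Running the $\bigl(Y',\Delta''+(1-\epsilon)F\bigr)$-MMP over each $X_i$, the relative numerical triviality of $K+\Delta'$ over $X_i$ converts negativity on an extremal ray $R$ into $\bigl(R\cdot((1-\epsilon)F-\Delta_R)\bigr)<0$ with $(1-\epsilon)F-\Delta_R$ effective and supported on $F$; hence every contracted ray lies in $F$ and all of $F$ is contracted. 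Both outputs therefore carry exactly the same divisors (everything except $F$), giving the isomorphism in codimension $1$, and since the lc centers of $(Y',\Delta')$ are not contained in $F$, the whole process is a local isomorphism at them, which yields thriftiness. You would need this (or an equivalent device pinning down precisely which divisors survive on both sides) to make your argument work.
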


Proof. It is clear that each assertion implies the previous one. 

In order to see  (1) $\Rightarrow$ (3), let $\phi:(X_1, \Delta_1){\map}(X_2, \Delta_2)$
be a crepant, birational map and
 $E_j\subset X_2$  the exceptional divisors of $\phi^{-1}$. 
The discrepancy  $a(E_j, X_1, \Delta_1)$ 
equals minus the coefficient of $E_j$ in $\Delta_2$, thus it is
 $\leq 0$. 

By  \cite[1.38]{kk-singbook} there is a 
$\q$-factorial, dlt, crepant modification 
$p_1:(Y, \Delta_Y)\to ( X_1, \Delta_1)$
that extracts precisely the $E_j$
 and possibly some other
divisors with discrepancy $-1$.
 Then $p_2:=\phi\circ p_1^{-1}$ is a 
 crepant, birational contraction map. 

The proof of (4) is similar. We take a common log resolution
$\pi_i:(Y', \Delta')\to (X_i, \Delta_i)$ and
write  $\Delta'=\Delta''+\Delta_R$ where 
$\supp \Delta''$ consists of  the set of divisors on $Y'$ that are 
in one of the following three sets:
birational transforms of $\Delta_i$, divisors that are exceptional
for exactly one of the $\pi_i$ or divisors with coefficient 1 in $\Delta'$.
Let $F$ be the sum of all other $\pi_i$-exceptional divisors.

As in  \cite[1.35]{kk-singbook},
for $i=1,2$ run the  $\bigl( Y', \Delta''+(1-\epsilon)F\bigr) $-MMP over $X_i$ 
 for   $0<\epsilon\ll 1$ to get
$p_i:(X'_i, \Delta'_i)\to (X_i, \Delta_i)$. 
If $R$ is an extremal ray that we contract then
$\bigl(R\cdot(K+\Delta''+(1-\epsilon)F)\bigr)<0$. 
Since $K+\Delta'$
is numerically $\pi_i$-trivial, this is equivalent to
$$
\bigl(R\cdot(K+\Delta''+(1-\epsilon)F-K-\Delta')\bigr)=
\bigl(R\cdot((1-\epsilon)F-\Delta_R)\bigr)<0.
$$
By our choice of $F$, $(1-\epsilon)F-\Delta_R$ is effective and 
its support is $F$. 
Thus the extremal rays
contracted are always contained in 
$F$ and the MMP contracts all the divisors contained in 
$F$.
Note also that $(Y', \Delta'') $  and $(Y', \Delta''+(1-\epsilon)F) $ 
have the same lc centers and they are not contained in $F$.
Thus $Y'\map X'_i$ is a local isomorphism  at all the lc centers of
$(Y', \Delta') $ hence the induced birational map
$(X'_1, \Delta'_1)\map (X'_2, \Delta'_2)$ is  thrifty and an isomorphism in codimension 1.
\qed
\medskip

\begin{defn}[Dual complex]\label{dual.complex.defn}
  Let $E$ be a simple normal crossing  variety over a field $k$ with irreducible components $\{E_i:i\in I\}$. 
(Our main interest is in the case $k=\c$ but  sometimes it is
convenient to allow $k$ to be arbitrary.)

A {\it stratum}
of $E$ is any irreducible component $F\subset\cap_{i\in J} E_i$ 
for some $J\subset I$.

  The  {\it dual complex} of $E$,
denoted by $\D(E)$, is a CW-complex whose vertices are
  labeled by the irreducible components of $E$ and for every stratum $F\subset
  \cap_{i\in J} E_i$ we attach a $\bigl(|J|-1\bigr)$-dimensional cell.  
Note that for
  any $j\in J$ there is a unique irreducible component of $\cap_{i\in
    J\setminus\{j\}} E_i$ that contains $F$; this specifies the attaching map.  (The  dual complex is  a regular $\Delta$-complex in
  the terminology of \cite{hatcher}.)
\end{defn}

It is very important for us that crepant birational equivalence
does not change the dual complex.

\begin{thm} \label{fkx.thm.thm}
\cite{dkx} The dual complexes of  proper, log canonical,
crepant birational pairs are PL-homeomorphic to each other.\qed
\end{thm}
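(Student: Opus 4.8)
The plan is to reduce the statement to the purely combinatorial fact that a $\Delta$-complex and any stellar subdivision of it are PL-homeomorphic. Working in characteristic $0$, a crepant birational equivalence between $(X_1,\Delta_1)$ and $(X_2,\Delta_2)$ is, by Definition \ref{crep.eq.def.1}, witnessed by a common crepant log resolution; so it is enough to show that, for a \emph{fixed} log canonical pair $(X,\Delta)$, the PL-homeomorphism type of $\D\bigl(\Delta_Y^{=1}\bigr)$ does not depend on the choice of crepant log resolution $g:(Y,\Delta_Y)\to (X,\Delta)$. Given two such resolutions $Y_1$, $Y_2$, I would dominate them by a third log resolution $W$ with compatible crepant structure; since $\Delta_W$ is determined by the discrepancies over $(X,\Delta)$, it then suffices to treat the case of a birational \emph{morphism} $h:(W,\Delta_W)\to(Y,\Delta_Y)$ of log smooth pairs, both log resolutions of $(X,\Delta)$, and to compare $\D\bigl(\Delta_W^{=1}\bigr)$ with $\D\bigl(\Delta_Y^{=1}\bigr)$.

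First I would invoke the Weak Factorization Theorem, in the form that keeps a simple normal crossing divisor simple normal crossing: $h$ factors into a zig-zag of blow-ups and blow-downs, each along a smooth, connected center $Z$ that meets the current boundary only in strata, i.e. is in snc position. So everything reduces to the effect on $\D\bigl(\Delta^{=1}\bigr)$ of a single such blow-up $\pi:Y'\to Y$. Computing the discrepancy of the exceptional divisor $E$ in local coordinates gives $a(E,Y,\Delta_Y)=\codim_Y Z-1-\sum a_j$, the sum over the components of $\Delta_Y$ through $Z$; hence the coefficient of $E$ in $\Delta_{Y'}$ equals $1$ — equivalently $E$ contributes a vertex to $\D\bigl(\Delta_{Y'}^{=1}\bigr)$ — exactly when $Z$ is a stratum of $\Delta_Y^{=1}$ (a log canonical center of $(Y,\Delta_Y)$), and otherwise $a(E,Y,\Delta_Y)>-1$. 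If $Z$ is not a stratum then, since in snc position $Z$ contains no stratum of $\Delta_Y^{=1}$ nor of any intersection of its components, $\pi$ is an isomorphism over the generic point of each such stratum, and strict transform induces an incidence-preserving bijection of strata, so $\D\bigl(\Delta_{Y'}^{=1}\bigr)=\D\bigl(\Delta_Y^{=1}\bigr)$. If $Z$ is a stratum, say an irreducible component of $D_1\cap\dots\cap D_c$ with all $D_i\subset\Delta_Y^{=1}$, then $D'_1\cap\dots\cap D'_c$ loses the component $Z$ while the new vertex $E$ meets the transforms $D'_i$ and the other components in the expected projective subbundles; reading off the incidences shows that $\D\bigl(\Delta_{Y'}^{=1}\bigr)$ is precisely the stellar subdivision of $\D\bigl(\Delta_Y^{=1}\bigr)$ at the cell corresponding to $Z$.

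Assembling: along the zig-zag, every step either leaves $\D\bigl(\Delta^{=1}\bigr)$ unchanged or replaces it by a stellar subdivision or the inverse of one, and these operations preserve PL-homeomorphism type, so $\D\bigl(\Delta_W^{=1}\bigr)\simeq\D\bigl(\Delta_Y^{=1}\bigr)$ and the theorem follows. I expect the main obstacle to lie in the single-blow-up step: one must verify, with no genericity assumptions, both that a non-stratum center interacts with the coefficient-$1$ locus only through blow-ups of Cartier divisors inside strata — creating and destroying no stratum — and that a stratum center produces the exact incidence pattern of a stellar subdivision rather than something merely homotopy equivalent to it. A subsidiary point that must not be skipped is the reduction to a weak factorization which keeps the entire boundary snc at every intermediate stage and uses irreducible centers.
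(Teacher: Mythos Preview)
The paper does not prove this statement at all: it is quoted from \cite{dkx} with a terminal \qed, so there is no in-paper argument to compare against. Your sketch is essentially the strategy carried out in \cite{dkx}: reduce to comparing two crepant log resolutions of the same pair, dominate by a common one, invoke the snc-preserving form of weak factorization, and analyze a single blow-up along a smooth center in snc position, obtaining either no change (center not an lc center) or a stellar/barycentric subdivision (center a stratum of $\Delta^{=1}$).

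The two caveats you flag are exactly the technical content of \cite{dkx}. First, one needs the refined weak factorization in which every intermediate model maps to one of the two ends and the full boundary stays snc; this controls the interaction of non-stratum centers with $\Delta^{=1}$ and ensures no spurious strata are created or destroyed. Second, when the center \emph{is} a stratum one must check all incidences among $E$, the strict transforms $D_i'$, and the other components to see a genuine stellar subdivision on the nose (PL-homeomorphism, not just homotopy equivalence); this is a local computation in coordinates but has several cases. One further point worth making explicit in your write-up: in the reduction step, the divisor $\Delta_W$ on the common roof is forced by the crepant condition, so $\Delta_W^{=1}$ is unambiguous and the comparison with each $\Delta_{Y_i}^{=1}$ is well posed.
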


Using Theorem \ref{fkx.thm.thm}
our aim is to study the dual complex $\DMR(X, \Delta)$ 
of logCY pairs
in 2 steps. First we show that  $(X, \Delta)$
is crepant birational to a ``Fano--type'' logCY pair.
There are several natural ways to define ``Fano--type.''
For our current purposes the best seems to be to
 assume that $\Delta^{=1}$ supports a
big and semi-ample divisor.

The second step is   the study of  
$\DMR(X, \Delta)$ in the Fano--type cases.

\begin{defn} Let $X$ be a variety and $D$ an effective divisor.
We say that  $D$ {\it fully supports} a divisor that is ample (or big, or semi-ample or \dots) if there is an effective divisor $H$  that is ample (or big, or semi-ample or  \dots) such that $\supp H=\supp D$.

 Example \ref{big.nef.not.rc.exmp} shows the important difference between
supporting a big and semi-ample  divisor and fully 
supporting a big and semi-ample divisor.
\end{defn}

\begin{say}[Dlt singularities] \label{delt.defn.say}
Divisorial log terminal singularities 
form a very convenient class to work with; see \cite[2.37]{km-book}
or \cite[2.8]{kk-singbook}. The precise definition
is important for the proof of Theorem \ref{p-bir}, but for
most everything else all one needs to know is that dlt pairs
behave very much like simple normal crossing pairs.

If $(X, \Delta)$ is dlt then, by \cite[Thm.3]{dkx}, 
$\DMR(X, \Delta)$ can be computed directly from 
$\Delta^{=1}$ as in Definition \ref{dual.complex.defn}.
In particular,  $\dim X=n$ and  
$(X, \Delta)$  has  maximal intersection
 iff there are $n$ divisors $D_1,\dots, D_n\subset 
\Delta^{=1}$ such that    $D_1\cap\cdots\cap D_n$
is non-empty (hence necessarily 0-dimensional). 

If $(X, \Delta)$ is dlt then for every stratum $W\subset X$
there is a natural $\q$-divisor  $ \diffg_W\Delta$ such that
$(W, \diffg_W\Delta)$ is dlt and
$
K_W+\diffg_W\Delta\simq (K_X+\Delta)|_W$;
see \cite[Sec.4.1]{kk-singbook} for the definition and basic properties.
Thus $(W,\diffg_W\Delta)$ is also logCY.

In general $ \diffg_W\Delta$ is somewhat complicated to determine,
but if all divisors in  $\Delta$ have coefficient 1 
and $K_X+\Delta$ is Cartier then
the same holds for $ \diffg_W\Delta$ and $ \diffg_W\Delta$
consists of the intersections of $W$ with those irreducible components of $\Delta$ that do not contain $W$.

This implies that  for every irreducible  divisor
$D\subset \Delta^{=1}$,
the link of $[D]\in \DMR(X, \Delta)$ is
PL-homeomorphic to $\DMR\bigl(D, \diffg_D\Delta\bigr)$.
Thus the local structure of $\DMR(X, \Delta)$ is determined
by the lower dimensional logCY pairs $\bigl(D, \diffg_D\Delta\bigr)$. 
\end{say}

\section{Reduction steps}\label{sec.reduce}

We discuss various steps that simplify $(X, \Delta)$
without changing the dual complex or changing it in 
simple ways. The final conclusion is that one should focus
on logCY pairs $(X, \Delta)$ such that
\begin{itemize}
\item  $(X, \Delta)$ is dlt, $\q$-factorial,
\item $X$ is rationally connected,
\item $\Delta=\Delta^{=1}$,
\item $\Delta$ fully   supports a big, semi-ample divisor and
\item $K_X+\Delta\sim 0$.
\end{itemize}

\begin{say}[Disconnected case] \label{disconnect.say}
By \cite{k-source} or
\cite[4.37]{kk-singbook}, if $(X, \Delta)$ is logCY and the dual complex 
$\DMR(X, \Delta)$ is disconnected 
then $(X, \Delta)$ is crepant birational to a product
$(Y,\Delta_Y)\times \bigl(\p^1, [0]+[\infty]\bigr)$
where $(Y,\Delta_Y)$ is klt. In particular
$\DMR(X, \Delta)\simeq {\mathbb S}^0$.  Thus in the sequel we need to
deal  only with the connected case.
\end{say}

\begin{say}[Index 1 cover]
Assume that  $\Delta=\Delta^{=1}$ and let $m$ be the smallest natural number
such that $m(K_X+\Delta)\sim 0$. Correspondingly there is a
degree $m$ quasi-\'etale 
(that is, \'etale in codimension 1) cover  
$(\tilde X, \tilde \Delta)\to (X, \Delta)$
and  $\DMR(X, \Delta)\simeq \DMR(\tilde X, \tilde \Delta)/\z_m$.
\end{say}

\begin{say}[Rational connectedness] \label{rc.say.1}
Let $(X, \Delta)$ be a logCY pair. If $X$ is not rationally connected then there is an MRC-fibration  $g:X\map Z$ onto a non-uniruled variety
by \cite[IV.5.2]{rc-book} and \cite{ghs}.
Let $(X_{k(Z)}, \Delta_{k(Z)})$ denote the generic fiber.

We see in  Proposition \ref{maps.onot.nonur.prop} that
 every log canonical center dominates $Z$. Thus,
by Lemma \ref{gen.fiber.lem},
$$
\DMR(X, \Delta)\simeq \DMR\bigl( X_{k(Z)}, \Delta_{k(Z)}\bigr).
$$
Thus it is enough to consider those logCY pairs whose
underlying variety is rationally connected.
\end{say}

\begin{prop} \label{maps.onot.nonur.prop}
Let $(X, \Delta) $ be a dlt logCY pair and
$g:X\map Z$ a dominant map to a non-uniruled variety $Z$.
Then every
irreducible component  $D\subset \Delta^{=1}$ dominates $Z$.
\end{prop}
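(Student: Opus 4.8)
The plan is a proof by contradiction, combining the canonical bundle formula for $g$ with the characterization of non-uniruled varieties by pseudo-effectivity of the canonical class. Suppose some component $D\subset\Delta^{=1}$ does not dominate $Z$. Replacing $Z$ by a smooth projective model and then blowing it up along $\overline{g(D)}$ (and passing to a suitable model of $X$), we may assume that $D$ dominates a prime divisor $V\subsetneq Z$; this is legitimate, since non-uniruledness is a birational invariant and ``$D$ dominates the new base'' implies ``$D$ dominates the old one.'' After a Stein factorization we may also assume $g$ has connected fibres, and, resolving the indeterminacy of $g$, we view $(X,\Delta)\to Z$ as an lc-trivial fibration.

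By the canonical bundle formula (Ambro; Fujino--Gongyo --- the general fibre is logCY, hence has a good minimal model, so the moduli part is $b$-nef) we have $K_X+\Delta\simq g^*\bigl(K_Z+B_Z+M_Z\bigr)$, where $B_Z$ is the discriminant $\q$-divisor and $\mathbf M$ the $b$-nef moduli $\q$-$b$-divisor. Since $(X,\Delta)$ is log canonical and $\Delta\geq 0$, over the generic point of any prime divisor $P\subset Z$ the divisor $\Delta+t\,g^*P$ has a coefficient $\geq t$, so $(X,\Delta+t\,g^*P)$ is not log canonical for $t>1$; hence the defining threshold satisfies $\gamma_P\leq 1$, i.e.\ $B_Z\geq 0$. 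As $K_X+\Delta\simq 0$, on every model $\bar Z\to Z$ one has $K_{\bar Z}+B_{\bar Z}+M_{\bar Z}\simq 0$.

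Now choose $\mu\colon\bar Z\to Z$ on which $\mathbf M$ descends to a nef divisor $M_{\bar Z}$, and write $B_{\bar Z}=B^+-B^-$ with $B^\pm\geq 0$ sharing no components. A prime divisor of $\bar Z$ that is not $\mu$-exceptional maps onto a prime divisor $P\subset Z$ and has the same threshold $\gamma_P\leq 1$, so $B^-$ is $\mu$-exceptional; pushing the relation forward yields $K_Z+\mu_*B^++\mu_*M_{\bar Z}\simq 0$ on $Z$. Since $Z$ is not uniruled, $K_Z$ is pseudo-effective, and intersecting with $A^{\dim Z-1}$ for an ample $A$ on $Z$ and using the projection formula gives $K_Z\cdot A^{\dim Z-1}=K_{\bar Z}\cdot(\mu^*A)^{\dim Z-1}\geq 0$, $\mu_*M_{\bar Z}\cdot A^{\dim Z-1}=M_{\bar Z}\cdot(\mu^*A)^{\dim Z-1}\geq 0$, and $\mu_*B^+\cdot A^{\dim Z-1}\geq 0$, with total $0$; hence all three vanish and the effective divisor $\mu_*B^+$ is zero. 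Therefore $B_Z=\mu_*B_{\bar Z}=\mu_*B^+-\mu_*B^-=0$, that is, $\gamma_P=1$ for every prime divisor $P\subset Z$.

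In particular $\gamma_V=1$. On the other hand, on a model $f_1\colon X_1\to Z$ of $g$ the birational transform $D_1$ of $D$ still has coefficient $1$ in the crepant transform $\Delta_1$, and, since $D_1$ dominates $V$, it is a component of $f_1^{-1}(V)$, hence occurs in $f_1^*V$ with multiplicity $\geq 1$. So $\Delta_1+t\,f_1^*V$ has coefficient $>1$ along $D_1$ for every $t>0$, and $(X_1,\Delta_1+t\,f_1^*V)$ is not sub-log-canonical near the generic point of $V$; thus $\gamma_V=0$, contradicting the previous paragraph. Hence every component of $\Delta^{=1}$ dominates $Z$. The main obstacle is the argument of the third paragraph: one must know that the discriminant of a log canonical fibration with effective boundary is effective \emph{on the base}, and, crucially, that its negative part on a high model is exceptional over the base --- only then does pushing $K+B+M\simq 0$ down to $Z$ recover an effective discriminant and let the pseudo-effectivity of $K_Z$ bite. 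The preliminary blow-up of $Z$ along $\overline{g(D)}$ is the device that makes a non-dominating component of $\Delta^{=1}$ visible to the discriminant and turns $B_Z=0$ into a contradiction with $\gamma_V=0$.
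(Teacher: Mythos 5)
Your proof is correct and follows essentially the same route as the paper's: blow up $Z$ so that the offending component dominates a divisor, apply the canonical bundle formula $K+\Delta\simq g^*(K_Z+J+B)$, and derive a contradiction by pairing this decomposition with a movable curve class on $Z$, using Miyaoka--Mori/pseudo-effectivity of $K_Z$, nefness of the moduli part, and strict positivity of the discriminant along the image of $D$. The only cosmetic difference is that you intersect with $A^{\dim Z-1}$ on $Z$ to conclude $B_Z=0$ globally, whereas the paper intersects with the image $g(C)$ of a general complete intersection curve in $X$; these are the same argument in different packaging.
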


Proof. 
We may assume that $Z$ is smooth and projective.
Let $C\subset X$ be a general complete intersection curve,
in particular $g$ is defined along $C$.
Then  $\bigl(C\cdot (K_X+\Delta)\bigr)=0$
and $(C\cdot D)>0$. 

If $D\subset \Delta^{=1}$ does not dominate $Z$ then
we  reach a contradiction by
proving that  $\bigl(C\cdot (K_X+\Delta)\bigr)>0$.
By blowing up $Z$ we may  assume that $D$ dominates a divisor 
 $D_Z\subset Z$;
see \cite[2.22]{kk-singbook}.

Let $Y$ be the normalization of the closure of the graph of $g$.
Choose $\Delta_Y$ such that the first projection
$(Y, \Delta_Y)\to (X, \Delta)$ is crepant.
Since $g$ is defined along $C$ we may identify $C$ with its
preimage in $Y$ and so
$\bigl(C\cdot (K_Y+\Delta_Y)\bigr)=0$.

We apply  the canonical class formula (\ref{kod.form.say}.4)
 to the second projection
$\pi_2:Y\to Z$ and write
$K_Y+ \Delta_Y\simq\pi_2^*\bigl(K_Z+J+B\bigr)$.

Note that $\bigl(C\cdot \pi_2^*K_Z\bigr)=\bigl(g(C)\cdot K_Z\bigr)\geq 0$
since $Z$ is not uniruled \cite{mi-mo} and
$\bigl(C\cdot \pi_2^*J\bigr)=\bigl(g(C)\cdot J\bigr)\geq 0$
by  (\ref{kod.form.say}.5).
Finally, (\ref{kod.form.say}.8)  shows 
that a divisor $W\subset Z$ appears in 
$B$ with  positive  (resp.\ nonnegative) coefficient if
there is divisor $W_Y\subset Y$ dominating $Y$ that   appears in 
$\Delta_Y$ with  positive  (resp.\ nonnegative) coefficient.
Thus  $g(C)$ is disjoint from the non-effective part of $B$
and intersects $D_Z$ nontrivially. Furthermore, $D_Z$  appears in $B$ with 
positive   coefficient. Thus
$\bigl(C\cdot \pi_2^*B\bigr)=\bigl(g(C)\cdot B\bigr)> 0$.
Adding these together we get that
 $\bigl(C\cdot (K_Y+\Delta_Y)\bigr)>0$, a contradiction.\qed

\begin{say}[Kodaira--type canonical class formula]\label{kod.form.say}
The original formula for elliptic surfaces was further developed by 
\cite{MR816221, MR1646046}. The following is a somewhat simplified
version of the form given in 
 \cite[8.5.1]{k-adj}. 

Let $(Y,\Delta)$ be a log canonical pair where $\Delta$ is not assumed effective.
Let $p:Y\to Z$ be a proper morphism to a normal variety $Z$
with geometrically connected generic fiber  $X_{k(Z)}$. Assume that
\begin{enumerate}
\item  $\Delta_{k(Z)}$ is effective,
\item  $\bigl(X_{k(Z)}, \Delta_{k(Z)} \bigr)$ is logCY and
\item $K_Y+ \Delta $ is $\q$-linearly equivalent to the pull-back of some $\q$-Cartier $\q$-divisor from $Z$. (This seems like a strong restriction
but it is easy to achieve by changing $\Delta$.)
\end{enumerate}
Let $Z^0\subset Z$ be the largest open set such that $p$ is
flat over $Z^0$ with logCY fibers and set $Y^0:=p^{-1}(Z^0)$. 
Then one can write
$$
K_Y+ \Delta\simq p^*\bigl(K_Z+J+B\bigr)
\eqno{(\ref{kod.form.say}.4)}
$$
where $J$ and $B$ have the following properties.
\begin{enumerate}\setcounter{enumi}{4}
\item $J$ is a $\q$-linear equivalence class, called the {\it modular part.} 
It depends only on  the generic fiber  $\bigl(X_{k(Z)}, \Delta_{k(Z)} \bigr)$ and it is 
the push-forward of a nef class by some birational morphism $Z'\to Z$.
In particular, $J$ is pseudo-effective.
\item  $B$ is a $\q$-divisor, called the {\it boundary part.}
It is  supported on $Z\setminus Z^0$. 
\item Let $D\subset Z\setminus Z^0$ be an irreducible  divisor. Then
$$
\coeff_{B}D=\sup_E\Bigl\{1-\frac{1+a(Y,\Delta, E)}{\mult_Ep^*D}\Bigr\}
$$
where the supremum is taken over all divisors over $Y$ that dominate $D$.
\end{enumerate}
This implies the following.
\begin{enumerate}\setcounter{enumi}{7}
\item If $D$ is dominated by a divisor $E$ such that
$a(Y,\Delta, E)<0$  (resp.\ $\leq 0$) then
$\coeff_{B}D>0$  (resp.\ $\geq 0$). 
\item If $\Delta$ is effective 
then so is $B$.
\item If $(Y,\Delta)$ is lc then $\coeff_{B}D\leq 1$
for every $D$ and $\coeff_{B}D=1$  iff $D$ is dominated by a divisor $E$ such that $a(Y,\Delta, E)=-1$.
\end{enumerate}
 \end{say}

The strongest reduction assertion is
the following, which is a weak form of our main technical result;
see Corollary \ref{p-bir-cor} for the general case.

\begin{thm} \label{p-bir-weak-cor}
Let $(X, \Delta)$ be a logCY pair.
Then there is a volume preserving birational map 
 $(X, \Delta)\map ( X_s, \Delta_s)$  such that
\begin{enumerate}
\item in the  maximal intersection case $\Delta_s^{=1}$ fully supports a
big and semi-ample divisor and
\item in general there is a morphism  $q_s: X_s\to Z$ 
with  generic fiber   $( X_{k(Z)},  \Delta_{k(Z)})$ such that
\begin{enumerate}
\item  $\DMR( X_{k(Z)}, \Delta_{k(Z)})\simeq\DMR( X,  \Delta)$ and
\item $ \Delta_{k(Z)} $ fully supports a
big and semi-ample divisor.
\end{enumerate}
\end{enumerate}
\end{thm}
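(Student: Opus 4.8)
The plan is to run a carefully chosen minimal model program (MMP) that produces a birational model on which the ``large divisor'' $\Delta^{=1}$ becomes positive, while keeping track of volume preservation via the crepancy condition $K+\Delta\simq 0$. I would start from the reduction steps already established in Section \ref{sec.reduce}: by Paragraph \ref{disconnect.say} we may assume $\DMR(X,\Delta)$ is connected; by Proposition \ref{maps.onot.nonur.prop} and Paragraph \ref{rc.say.1}, after passing to the generic fiber of the MRC fibration (which does not change the dual complex, and which is the map $q_s$ in case (2)), we may assume $X$ is rationally connected; and using Lemma \ref{factoring.crep.maps} together with \cite[1.38]{kk-singbook} we may assume $(X,\Delta)$ is $\q$-factorial dlt with $\Delta=\Delta^{=1}$ and $K_X+\Delta\sim 0$. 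So the problem reduces to: on a rationally connected $\q$-factorial dlt logCY pair $(X,\Delta)$ with $\Delta=\Delta^{=1}$ and maximal intersection, find a volume preserving birational map to $(X_s,\Delta_s)$ with $\Delta_s$ fully supporting a big and semi-ample divisor.

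The core idea is to run an MMP on $-\Delta$ (equivalently, since $K_X\simq -\Delta$, an MMP that is $K_X$-positive but $(K_X-\epsilon\Delta)$-negative, or better: take an ample $A$ and run the $(K_X+\Delta+ tH)$-MMP for a suitable auxiliary divisor $H$). Concretely, because $X$ is rationally connected $-K_X=\Delta$ is ``close to'' big; I would pick a general ample $A$ and consider the MMP for $K_X+\Delta - A \simq -A$, i.e. a $(-A)$-MMP, which terminates with a Mori fiber space or with $-A$ nef — but since $-A$ is never nef, it terminates with a Mori fiber space structure. To make the dual complex unchanged, I need every step of this MMP to be volume preserving for $(X,\Delta)$, i.e. crepant for the pair; this is exactly where the dlt and $K_X+\Delta\sim 0$ hypotheses are used: a step in the $(-A)$-MMP contracts rays $R$ with $(R\cdot A)>0$, hence $(R\cdot(-K_X))=(R\cdot\Delta)>0$, and in the dlt logCY setting such contractions and flips are automatically crepant birational (cf.\ the discussion after Definition \ref{crep.eq.def.2} — ``if the $(X_i,\Delta_i)$ are logCY then the linear equivalence is automatic''). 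I must also check the MMP preserves the log canonical centers / the divisors in $\Delta^{=1}$ so that $\DMR$ is genuinely unchanged; this should follow from Theorem \ref{fkx.thm.thm} once crepancy is in hand. The upshot is that after finitely many flips and divisorial contractions we reach $(X_s,\Delta_s)$ together with a Fano contraction, from which the semi-ampleness and bigness of (a divisor supported on) $\Delta_s$ should be read off.

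For the final positivity statement I would argue as follows. On the Mori fiber space $X_s\to T$ we have $-K_{X_s}=\Delta_s$ relatively ample, and if $\dim T=0$ then $X_s$ is Fano with $-K_{X_s}=\Delta_s$ ample — so $\Delta_s$ fully supports an ample divisor and case (1) is done immediately; more care is needed when $\dim T>0$, where I would want to further modify (run an MMP over $T$ or a base change) or else invoke induction on dimension to land in the fully-supported big-and-semi-ample case. The general (non-maximal-intersection) statement (2) then comes by combining this with the MRC reduction: $Z$ is the base of the MRC fibration, $q_s$ is the induced morphism, (2a) is Paragraph \ref{rc.say.1} / Lemma \ref{gen.fiber.lem}, and (2b) is the maximal-intersection case (1) applied to the rationally connected generic fiber $(X_{k(Z)},\Delta_{k(Z)})$.

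The main obstacle I anticipate is two-fold. First, ensuring that the entire MMP stays crepant for $(X,\Delta)$ — i.e.\ that no step introduces a discrepancy change or alters $\Delta^{=1}$ — requires genuine care: one must run the ``right'' MMP (with scaling, say by $A$) and verify at each extremal ray that the contracted locus meets $\supp\Delta$ positively, using $K_X+\Delta\sim 0$; the delicate point is that divisorial contractions could in principle contract a component of $\Delta^{=1}$, which would change the dual complex, so one needs to show such contractions are in fact isomorphisms in a neighborhood of the log canonical centers (as in the thrifty/isomorphism-in-codimension-1 analysis of Lemma \ref{factoring.crep.maps}(4)). Second, passing from ``relatively ample over a Mori fiber space'' to ``fully supports a big and semi-ample divisor on the total space'' is not formal — it is precisely the distinction flagged in Example \ref{big.nef.not.rc.exmp} — and I expect this is where one must work hardest, likely by an inductive argument on $\dim X$ using the adjunction structure $(W,\diffg_W\Delta)$ from Paragraph \ref{delt.defn.say} to build up the semi-ample linear system on $\Delta_s$ from its restrictions to strata, together with a base-point-freeness/abundance-type input for the logCY pairs in play.
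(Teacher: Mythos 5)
Your opening reductions and your first MMP step do match the paper's starting point: since $K_X+\Delta\simq 0$, every step of the $(X,\Delta^{<1})$-MMP (the $K_X$-MMP once $\Delta=\Delta^{=1}$) is automatically crepant for $(X,\Delta)$, it terminates in a Fano contraction $X_r\to Z_1$ because $K_X+\Delta^{<1}\simq-\Delta^{=1}$ is not pseudo-effective, and $\Delta_r^{=1}$ becomes relatively ample over $Z_1$. (Your worry that a divisorial contraction might destroy the dual complex is a non-issue: the map is crepant, so Theorem \ref{fkx.thm.thm} applies.) The genuine gap is exactly the point you flag and then wave at: when $\dim Z_1>0$ — which happens even in the maximal intersection case — relative ampleness over $Z_1$ is nowhere near the conclusion, and ``run an MMP over $T$, or a base change, or induct on dimension via adjunction and abundance'' is not an argument. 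The paper's entire Section 7 exists to fill this hole: one writes $K_{X_r}+\Delta_r\simq q^*(K_{Z_1}+J_1+B_1)$ via the canonical bundle formula (\ref{kod.form.say}), runs an MMP on the non-effective pair $(Z_1,J_1+B_1^{\rm h})$ on the base (Paragraph \ref{mmp.not.pseff.say}), lifts it back to $X$ (Lemma \ref{lift.mmp.rel.logcy.lem}), uses Lemma \ref{subcomplex.lemma} to arrange that the vertical part of $\Delta^{=1}$ is exactly $q^*(B^{=1})$ so that relative ampleness can be combined with positivity pulled back from the base, and iterates; the induction is on $\dim Z_i$, and the process stops precisely when every lc center dominates $Z$. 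Only then does maximal intersection enter: a zero-dimensional lc center dominating $Z$ forces $Z$ to be a point. Your proposal contains no substitute for this descent, and your suggested alternative (building a semi-ample system on $\Delta_s$ from its strata plus ``abundance-type input'') is not available technology.

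Your treatment of case (2) is also incorrect as stated. You take $Z$ to be the MRC base and propose to get (2b) by applying case (1) to the generic fiber; but the generic MRC fiber is rationally connected without necessarily having maximal intersection (e.g.\ $(\p^3,\,\mathrm{quartic\ surface})$ is rationally connected with $\dim\DMR=0$), so case (1) simply does not apply to it, and the MRC fibration by itself gives no positivity of $\Delta^{=1}$ whatsoever — it only gives (2a) via Proposition \ref{maps.onot.nonur.prop} and Lemma \ref{gen.fiber.lem}. In the paper, $Z$ is not the MRC base: it is the base produced by the iterated MMP above, characterized by the property that every lc center of $(\bar X,\bar\Delta)$ dominates it; both (2a) and (2b) are then read off from Theorem \ref{p-bir} (via Corollary \ref{p-bir-cor}) by restricting the relatively ample divisor supported on $\bar\Delta^{=1}$ to the generic fiber.
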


\begin{say}[Fractional part of $\Delta$]
Assume that $\Delta^{<1}\neq 0$. Then, by \cite{bchm, MR3032329},  the $(X, \Delta^{=1})$-MMP
terminates with a Fano-contraction  $q:X_r\to Z$. 
Note that  $\DMR(X, \Delta)\simeq \DMR(X_r, \Delta_r)$ 
by Theorem \ref{fkx.thm.thm}.

If Proposition \ref{k-source.thm10.prop} applies then 
$\DMR(X_r, \Delta_r^{=1})$ is collapsible to a point and it remains to compare  $\DMR(X_r, \Delta_r)$ and
$\DMR(X_r, \Delta_r^{=1})$. In general this seems rather difficult and
it can happen that $\DMR(X_r, \Delta_r^{=1})$ is empty yet
 $\DMR(X_r, \Delta_r)$ is not.
There is, however, one case when the two are closely related.

Assume that $\Delta^{=1} $ fully supports a big and semi-ample divisor over $Z$.
 Then $\Delta_m^{=1} $  fully supports a big and mobile divisor over $Z$
for any dlt
modification
 $h:(X_m, \Delta_m)\to (X_r, \Delta_r)$.
This    implies that  $\supp \Delta_m^{=1} $ dominates $Z$ and 
$$
\supp \Delta_m^{=1} =h^{-1}\bigl(\supp \Delta_r^{=1}\bigr).
$$
Thus \cite[Thm.3]{dkx} proves that
$\DMR(X_m, \Delta_m)\simeq \DMR(X_r, \Delta_r)$ collapses to  
$\DMR(X_r, \Delta_r^{=1})$. (We do not known whether they are
PL-homeomorphic or not.)  

Thus, in this case, 
$ \DMR(X, \Delta)\simeq \DMR(X_m, \Delta_m)$ is collapsible to a point.

Combining with Theorem \ref{p-bir-weak-cor}, we have proved the following
strengthening of Proposition \ref{frac.coeff.contract.prop}.
\end{say}

\begin{cor} Let $(X,\Delta)$ be a logCY pair such that
$ \DMR(X, \Delta)$ is not collapsible to a point.

Then the model  $(X_s, \Delta_s)$ obtained in 
Theorem \ref{p-bir-weak-cor} also satisfies
$\Delta_s=\Delta_s^{=1}$ in the maximal intersection case and
$\Delta_{k(Z)}=\Delta_{k(Z)}^{=1}$ in general. \qed
\end{cor}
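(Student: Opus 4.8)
The plan is to combine the two reduction assertions that the surrounding text has already set up: Theorem~\ref{p-bir-weak-cor} produces a model $(X_s,\Delta_s)$ whose boundary (in the maximal intersection case) or whose generic-fiber boundary $\Delta_{k(Z)}$ (in general) fully supports a big and semi-ample divisor, and the preceding \texttt{say} on the fractional part of $\Delta$ shows that whenever $\Delta^{<1}\neq 0$ \emph{and} $\Delta^{=1}$ fully supports a big and semi-ample divisor (over $Z$), the dual complex $\DMR(X,\Delta)$ is collapsible to a point. So the contrapositive is exactly what we want. First I would observe that, by Theorem~\ref{fkx.thm.thm}, passing from $(X,\Delta)$ to the model $(X_s,\Delta_s)$ of Theorem~\ref{p-bir-weak-cor} does not change $\DMR$, so the hypothesis that $\DMR(X,\Delta)$ is not collapsible to a point is inherited by $\DMR(X_s,\Delta_s)$, and in the fibered case by $\DMR(X_{k(Z)},\Delta_{k(Z)})$ via the PL-homeomorphism in Theorem~\ref{p-bir-weak-cor}(2a).

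Next, in the maximal intersection case, I would argue by contradiction: suppose $\Delta_s^{<1}\neq 0$. Then $\Delta_s^{=1}$ fully supports a big and semi-ample divisor by Theorem~\ref{p-bir-weak-cor}(1), so the fractional-part discussion applies verbatim (run the $(X_s,\Delta_s^{=1})$-MMP to a Fano contraction, pass to a dlt modification, invoke \cite[Thm.3]{dkx}) to conclude that $\DMR(X_s,\Delta_s)$ is collapsible to a point, contradicting what we inherited in the previous step. Hence $\Delta_s=\Delta_s^{=1}$. In the general fibered case I would run the same contradiction one level down, on the generic fiber $(X_{k(Z)},\Delta_{k(Z)})$: if $\Delta_{k(Z)}^{<1}\neq 0$, then since $\Delta_{k(Z)}^{=1}$ fully supports a big and semi-ample divisor by Theorem~\ref{p-bir-weak-cor}(2b), the fractional-part argument applied to the logCY pair $(X_{k(Z)},\Delta_{k(Z)})$ over $\spec k(Z)$ shows $\DMR(X_{k(Z)},\Delta_{k(Z)})$ is collapsible, again contradicting non-collapsibility of $\DMR(X,\Delta)$.

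The one point that needs a little care — and which I expect to be the main (minor) obstacle — is the book-keeping of the phrase ``fully supports a big and semi-ample divisor \emph{over} $Z$'' in the fibered case: the fractional-part \texttt{say} is phrased for an absolute big and semi-ample divisor, so in the general case I would first replace ground field considerations by working over the generic point, where $Z$ becomes $\spec k(Z)$ and ``over $Z$'' becomes absolute, making $\Delta_{k(Z)}^{=1}$ honestly fully support a big and semi-ample divisor on $X_{k(Z)}$; then the cited argument goes through unchanged. (One should also note that $\Delta_{k(Z)}$, being the restriction to the generic fiber of the boundary of a dlt logCY pair, is again the boundary of a logCY pair with all the needed properties, so the MMP and dlt-modification steps are legitimate.) Everything else is a direct citation of results already in the excerpt, so the proof is short:

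\medskip\noindent
\emph{Proof.} By Theorem~\ref{p-bir-weak-cor} there is a volume preserving birational map $(X,\Delta)\map(X_s,\Delta_s)$ with the stated properties, and by Theorem~\ref{fkx.thm.thm} we have $\DMR(X_s,\Delta_s)\simeq\DMR(X,\Delta)$, which is therefore not collapsible to a point. Suppose first we are in the maximal intersection case and $\Delta_s^{<1}\neq 0$. By Theorem~\ref{p-bir-weak-cor}(1), $\Delta_s^{=1}$ fully supports a big and semi-ample divisor, so the argument of the paragraph on the fractional part of $\Delta$ (run the $(X_s,\Delta_s^{=1})$-MMP to a Fano contraction $q\colon X_r\to Z$, take a dlt modification, and apply \cite[Thm.3]{dkx}) shows that $\DMR(X_s,\Delta_s)$ is collapsible to a point, a contradiction. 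Hence $\Delta_s=\Delta_s^{=1}$. In the general case, $\DMR(X_{k(Z)},\Delta_{k(Z)})\simeq\DMR(X,\Delta)$ by Theorem~\ref{p-bir-weak-cor}(2a), so it is not collapsible; working over $\spec k(Z)$, the divisor $\Delta_{k(Z)}^{=1}$ fully supports a big and semi-ample divisor by Theorem~\ref{p-bir-weak-cor}(2b). If $\Delta_{k(Z)}^{<1}\neq 0$, the same MMP-and-dlt-modification argument applied to the logCY pair $(X_{k(Z)},\Delta_{k(Z)})$ shows $\DMR(X_{k(Z)},\Delta_{k(Z)})$ is collapsible to a point, a contradiction. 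Hence $\Delta_{k(Z)}=\Delta_{k(Z)}^{=1}$. \qed
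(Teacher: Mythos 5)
Your proof is correct and is essentially the paper's own argument: the corollary carries an immediate \qed precisely because it is the contrapositive combination of the ``fractional part of $\Delta$'' discussion with Theorem~\ref{p-bir-weak-cor}, which is exactly what you spell out. The only cosmetic difference is that in the fibered case you restrict to the generic fiber over $\spec k(Z)$ where the paper runs the same collapsing argument relatively over $Z$; the two are interchangeable here since $\DMR(X,\Delta)\simeq\DMR(X_{k(Z)},\Delta_{k(Z)})$.
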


Let $(Y,\Delta)$ be a dlt pair and $D\subset \Delta^{=1}$ an irreducible divisor
such that  $D\cap W$ is irreducible and non-empty for every log canonical center $W$.
Then 
$\DMR(X, \Delta)$ is the cone over $\DMR\bigl(D, \diffg_{D}\Delta\bigr)$.
If   $q:(Y,\Delta)\to Z$ is a Fano contraction 
and $D$ dominates $Z$ then it  has irreducible and non-empty intersection with every log canonical center;
this is a  special case of \cite[Thm.4.40]{kk-singbook}. 
Applying these repeatedly, we obtain the following.

\begin{prop} \label{k-source.thm10.prop}
Let $(Y,\Delta)$ be a dlt pair and $q:(Y,\Delta)\to Z$ a Fano contraction. 
Assume that $\supp \Delta^{=1} $ dominates $Z$.
Then there is  a unique smallest lc center $W\subset Y$ dominating $Z$ and
$$
\DMR(X, \Delta)=\sigma^{r}* \DMR\bigl(W, \diffg_{W}\Delta\bigr),
$$ 
the join of a simplex $\sigma^{r}$ of dimension $r=\codim_YW-1$ and of
$\DMR\bigl(W, \diffg_{W}\Delta\bigr)$. 

In particular,  $\DMR(X, \Delta)$ is
contractible, even  collapsible.
\qed
\end{prop}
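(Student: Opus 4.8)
The plan is to build the two asserted structures — first the existence of a unique smallest lc center dominating $Z$, and then the join decomposition — by an inductive descent through lc centers, using the ``cone over a divisor'' principle recalled in Paragraph \ref{delt.defn.say} together with the fact that a divisor dominating the base of a Fano contraction meets every lc center. The key input is the statement \cite[Thm.4.40]{kk-singbook}: if $q:(Y,\Delta)\to Z$ is a Fano contraction and $D\subset\Delta^{=1}$ is an irreducible divisor dominating $Z$, then $D\cap W$ is irreducible and non-empty for every lc center $W$ of $(Y,\Delta)$. Granting this, the conclusion of \ref{delt.defn.say} gives that $\DMR(Y,\Delta)$ is the cone over $\DMR(D,\diffg_D\Delta)$; moreover $(D,\diffg_D\Delta)$ is again dlt and logCY, and $q|_D:(D,\diffg_D\Delta)\to Z$ is still a Fano contraction (a general fiber of $q|_D$ is a general hyperplane-type section of a general fiber of $q$, hence still Fano, and the map is still a contraction with $-(K_D+\diffg_D\Delta)$ relatively ample by adjunction). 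Since $\supp\Delta^{=1}$ dominates $Z$, at least one component $D$ of $\Delta^{=1}$ dominates $Z$, so the setup reproduces itself on $(D,\diffg_D\Delta)$.

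The first step is to establish uniqueness of a smallest lc center dominating $Z$. For this I would argue that the lc centers of $(Y,\Delta)$ dominating $Z$ are closed under intersection of components: if $W_1,W_2$ both dominate $Z$, pick a divisorial lc center $D\subset\Delta^{=1}$ dominating $Z$ (possible as above); then $D\cap W_i$ is non-empty and irreducible by \cite[Thm.4.40]{kk-singbook}, and iterating one sees $W_1\cap W_2$ is non-empty, hence (being a union of lc centers, any of which dominating $Z$ lies in both $W_i$) contains an lc center that still dominates $Z$ by a dimension/flatness argument on the generic fiber. Taking a minimal element in this downward-directed poset yields a unique smallest lc center $W$ dominating $Z$. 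The second step is the join decomposition. Run the cone construction: choose $D_1\subset\Delta^{=1}$ dominating $Z$; then $\DMR(Y,\Delta)=[D_1]*\DMR(D_1,\diffg_{D_1}\Delta)$, a cone, i.e.\ the join with the $0$-simplex $\sigma^0$. If $D_1$ is already the smallest lc center dominating $Z$ we stop; otherwise $(D_1,\diffg_{D_1}\Delta)$ with $q|_{D_1}$ satisfies the same hypotheses, its smallest lc center dominating $Z$ is still $W$, and $\codim_{D_1}W=\codim_Y W-1$, so we repeat. After $r=\codim_Y W-1$ steps the boundary has been intersected down to $W$ and the accumulated joins assemble (using associativity of the join and $\sigma^0*\cdots*\sigma^0=\sigma^r$, $r+1$ factors) into $\sigma^r*\DMR(W,\diffg_W\Delta)$. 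The final sentence is then immediate: a join with a simplex is collapsible, hence contractible.

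I expect the main obstacle to be the bookkeeping that each intermediate pair $(D_i,\diffg_{D_i}\Delta)$ really continues to satisfy all the hypotheses of the proposition — in particular that $q$ restricted to it is still a Fano contraction onto the \emph{same} $Z$ with $\supp(\diffg_{D_i}\Delta)^{=1}$ still dominating $Z$, and that the smallest dominating lc center is preserved. The point about $\supp(\diffg_{D_i}\Delta)^{=1}$ dominating $Z$ is where one must be slightly careful: a priori, after passing to $D_i$, it could happen that the only remaining boundary components dominating $Z$ have disappeared. But this cannot happen before reaching $W$: as long as $D_i\neq W$, the center $W\subsetneq D_i$ is an lc center of $(D_i,\diffg_{D_i}\Delta)$ dominating $Z$, and by \cite[Thm.4.40]{kk-singbook} applied inside $D_i$ any divisorial lc center of $(D_i,\diffg_{D_i}\Delta)$ meets $W$; one then checks that the components of $(\diffg_{D_i}\Delta)^{=1}$ through $W$ still dominate $Z$ (again by a generic-fiber argument, since the general fiber of $q|_{D_i}$ is Fano and $K+\Delta\sim 0$ forces the restricted boundary to be non-trivial and to contain a component through the image of $W$). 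Everything else is the formal manipulation of cones, joins, and dimension counts, which is routine.
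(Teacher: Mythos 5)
Your proposal is correct and follows essentially the same route as the paper: the paper also cites \cite[Thm.4.40]{kk-singbook} to get that a component of $\Delta^{=1}$ dominating $Z$ meets every lc center irreducibly and non-emptily, deduces that $\DMR(Y,\Delta)$ is the cone over $\DMR\bigl(D,\diffg_D\Delta\bigr)$, and then simply says ``applying these repeatedly''; you have filled in the induction bookkeeping that the paper leaves implicit. Only a cosmetic remark: descending from $Y$ to $W$ of codimension $r+1$ takes $r+1$ cone steps (matching your ``$r+1$ factors'' of $\sigma^0$), not $r$ steps as written.
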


Note also that the simplex $\sigma^r$ is PL-homeomorphic to
${\mathbb S}^r/(\tau)$ where $\tau$ is a reflection on a hyperplane.
Thus, as in Example \ref{join.and.product.exmp},
if $\DMR\bigl(W, \diffg_{W}\Delta\bigr) $ is the quotient of a sphere
then so is $ \DMR(Y, \Delta)$.

\section{Basic results on the dual complex}

We need two results that connect the topology of $\D(E)$
and the algebraic geometry of $E$.
The following homological lemma is essentially proved in 
\cite[pp.68--72]{gri-sch}; see also  \cite[pp.26--27]{friedman-etal}
and \cite[3.63]{kk-singbook}. The fundamental group
result is rather straightforward; \cite[Lem.25]{k-fg}.

\begin{lem}\label{friedman??.lem}
  Let $E=\cup_{i\in I}E_i$ be a proper, simple normal crossing  variety over $\c$. 
Then there are natural injections
$$
H^r\bigl(\D(E), \c\bigr)\into H^r\bigl(E,  \o_{E}\bigr).
\eqno{(\ref{friedman??.lem}.1)}
$$
Furthermore, if  $H^r\bigl(\cap_{i\in J}E_i, \o_{\cap_{i\in J}E_i}\bigr)=0$ for every $r>0$ and  every $J\subset I$ then (\ref{friedman??.lem}.1) is an isomorphism.\qed
 \end{lem}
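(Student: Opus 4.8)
The plan is to resolve $\o_E$ by the structure sheaves of the (automatically smooth) strata of $E$ and to read both assertions off the resulting hypercohomology spectral sequence, as in \cite{gri-sch, friedman-etal}.

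\emph{Step 1 (the resolution).} Fix a total order on $I$. For $\emptyset\neq J\subset I$ write $E_J=\cap_{i\in J}E_i$, with inclusion $\iota_J\colon E_J\to E$; since $E$ is simple normal crossing each $E_J$ is smooth, being a disjoint union of strata. I would first record the exact complex of coherent sheaves on $E$
\[
0\to\o_E\to\bigoplus_{|J|=1}(\iota_J)_*\o_{E_J}\to\bigoplus_{|J|=2}(\iota_J)_*\o_{E_J}\to\cdots,
\]
where the maps are the alternating sums (with signs from the order on $I$) of the natural restrictions $(\iota_J)_*\o_{E_J}\to(\iota_{J'})_*\o_{E_{J'}}$ for $J\subset J'$, $|J'|=|J|+1$. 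Exactness is a local statement on $E$; after an \'etale-local reduction to a union of coordinate hyperplanes it follows by induction on the number of components, peeling off one component at a time via a Mayer--Vietoris sequence.

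\emph{Step 2 (the spectral sequence).} Next I would form the hypercohomology spectral sequence of this resolution,
\[
E_1^{p,q}=\bigoplus_{|J|=p+1}H^q\bigl(E_J,\o_{E_J}\bigr)\ \Longrightarrow\ H^{p+q}\bigl(E,\o_E\bigr).
\]
In the bottom row $H^0(E_J,\o_{E_J})=\c^{c(J)}$, where $c(J)$ is the number of strata contained in $E_J$, so $E_1^{\bullet,0}$ is exactly the simplicial cochain complex $C^\bullet\bigl(\D(E),\c\bigr)$ and its differential is the coboundary; hence $E_2^{p,0}=H^p\bigl(\D(E),\c\bigr)$. The edge homomorphism of the spectral sequence then supplies the canonical map of (\ref{friedman??.lem}.1), which factors as
\[
H^r\bigl(\D(E),\c\bigr)=E_2^{r,0}\twoheadrightarrow E_\infty^{r,0}=\gr^0H^r\bigl(E,\o_E\bigr)\hookrightarrow H^r\bigl(E,\o_E\bigr)
\]
for the finite decreasing filtration induced on $H^r(E,\o_E)$; one checks this is independent of the auxiliary order.

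\emph{Step 3 (injectivity — the main obstacle).} By the factorization above, (\ref{friedman??.lem}.1) is injective exactly when no higher differential $d_s$ ($s\geq 2$) has nonzero image in $E_s^{r,0}$, i.e.\ when the spectral sequence degenerates at $E_2$ along the bottom row. This is precisely the $E_2$-degeneration built into Deligne's mixed Hodge structure on the proper simple normal crossing variety $E$; in weight terms it identifies $H^r\bigl(\D(E),\c\bigr)$ with $\gr^W_0H^r(E,\c)=W_0H^r(E,\c)$, which is pure of weight $0$, hence of Hodge type $(0,0)$, hence meets $F^1H^r(E,\c)$ only in $0$ and therefore injects into $H^r(E,\c)/F^1=H^r(E,\o_E)$. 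Reproving this degeneration is the hard part; for the statement as used it is legitimate to cite \cite[pp.68--72]{gri-sch}, \cite[pp.26--27]{friedman-etal} or \cite[3.63]{kk-singbook}. Finally, for the second assertion: if $H^r(\cap_{i\in J}E_i,\o)=0$ for all $r>0$ and all $J$, then $E_1^{p,q}=0$ for $q>0$, so the spectral sequence collapses onto the row $q=0$, degenerates at $E_2$ for trivial reasons, and the edge map becomes the isomorphism $H^r\bigl(\D(E),\c\bigr)=E_2^{r,0}=E_\infty^{r,0}=H^r(E,\o_E)$, compatibly with (\ref{friedman??.lem}.1).
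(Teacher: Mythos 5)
Your proposal is correct and follows essentially the same route the paper relies on: the Mayer--Vietoris resolution of $\o_E$ by the structure sheaves of the intersections $E_J$, the resulting spectral sequence $E_1^{p,q}=\bigoplus_{|J|=p+1}H^q(E_J,\o_{E_J})\Rightarrow H^{p+q}(E,\o_E)$ whose bottom row is $C^\bullet(\D(E),\c)$ (this is exactly the spectral sequence the paper writes down in (\ref{top.coh.say}.2)), with injectivity of the edge map deferred to the Hodge-theoretic degeneration in the very references \cite{gri-sch, friedman-etal, kk-singbook} that the paper cites in lieu of a proof. The only nitpick is notational: $E_\infty^{r,0}$ is the deepest filtration step $F^rH^r(E,\o_E)$, i.e.\ $\gr^r$ rather than $\gr^0$, though this does not affect the argument since it is in any case a subspace of $H^r(E,\o_E)$.
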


\begin{lem}\label{pi1.pf.D.lem}  Let $E=\cup_{i\in I}E_i$ be a connected, simple normal crossing variety over $\c$.
 Then there is a natural surjection 
$$
\pi_1(E)\onto \pi_1\bigl(\D(E)\bigr).
\eqno{(\ref{pi1.pf.D.lem}.1)}
$$
Furthermore, if the  $E_i$ are simply connected then
(\ref{pi1.pf.D.lem}.1) is an isomorphism.\qed
\end{lem}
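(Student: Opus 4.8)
The plan is to build a map $\D(E)\to E$ (up to homotopy) by choosing points in strata, to exhibit a CW structure on (a homotopy model of) $E$ whose $1$-skeleton visibly maps onto $\D(E)$, and to read off surjectivity and, in the simply connected case, injectivity, from the van Kampen theorem applied to the stratification.

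First I would set up the combinatorial model. Since $E$ is a proper snc variety, it carries a natural stratification by the (open) strata $F^\circ$, where $F$ ranges over irreducible components of intersections $\cap_{i\in J}E_i$; the closed stratum $F$ of codimension $|J|-1$ corresponds to a $(|J|-1)$-cell $\sigma_F$ of $\D(E)$. For each such $F$ I would pick a base point $x_F$ in the open part $F^\circ$ (possible since each stratum is irreducible, hence its smooth locus is path-connected over $\c$), and for each incidence $F'\subset F$ of strata pick a path in $F$ from $x_{F}$ to $x_{F'}$ lying in $F'$'s neighborhood; these choices let one define a continuous map $\iota\colon \D(E)\to E$ by sending the barycenter of $\sigma_F$ to $x_F$ and interpolating along the chosen paths over the $1$-skeleton, then extending cell-by-cell (the higher cells of $\D(E)$ map into the contractible small pieces of $E$ near the deeper strata, so no obstruction arises). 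Restricted to the $1$-skeleton, $\iota$ already records the "incidence graph" of components, and its image meets every component $E_i$.

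Next I would prove surjectivity of $\iota_*\colon \pi_1(\D(E))\to\pi_1(E)$ is not what is wanted — rather, I want a surjection the other way. So instead I would use a Mayer–Vietoris / van Kampen decomposition of $E$ itself: write $E=\bigcup_i E_i$ and, thickening to regular neighborhoods $U_i\supset E_i$, note $U_i$ deformation-retracts to $E_i$ and $U_i\cap U_j$ deformation-retracts to $E_i\cap E_j$, and more generally $U_J:=\cap_{i\in J}U_i$ retracts onto $\cap_{i\in J}E_i$ (using that the pieces are themselves snc). The nerve of the cover $\{U_i\}$ is exactly $\D(E)$. Van Kampen for this cover then expresses $\pi_1(E)$ as a colimit: generators come from loops inside the $U_i$ together with one generator $t_{ij}$ per edge $\{i,j\}$ of $\D(E)$ (after choosing a spanning tree), and the relations coming from triple overlaps $U_i\cap U_j\cap U_k$ include precisely the relation that the product $t_{ij}t_{jk}t_{ki}$ lies in the image of $\pi_1(\cap E_{ijk})$. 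Killing the images of all $\pi_1(U_i)$ (i.e. all "vertical" loops) collapses this colimit onto the fundamental group of the graph of groups with trivial vertex and edge groups, which is $\pi_1$ of the $1$-skeleton modulo the $2$-cell relations $t_{ij}t_{jk}t_{ki}=1$ — that is, $\pi_1(\D(E))$. This gives the canonical surjection $\pi_1(E)\onto\pi_1(\D(E))$ of (\ref{pi1.pf.D.lem}.1), and it is manifestly natural, being induced by the nerve map.

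For the second statement, if each $E_i$ is simply connected then each vertex group in the van Kampen colimit is already trivial; the only possible extra relations beyond those of $\D(E)$ would come from edge or triple-overlap groups, i.e. from $\pi_1(E_i\cap E_j)$ and the images of $\pi_1(E_i\cap E_j\cap E_k)$. Here I expect the main subtlety: one must argue that these do not impose anything new — either because the relevant intersections are connected and their $\pi_1$ maps trivially in the colimit once the vertex groups are trivial, or, more carefully, one handles possibly disconnected/non-simply-connected deeper strata by induction on $\dim E$, the inductive hypothesis being exactly that $\pi_1(\cap_{i\in J}E_i)\onto\pi_1(\D(\mathrm{lk}))$ controls the local gluing. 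So the genuinely delicate point is bookkeeping the triple overlaps correctly in van Kampen when the double intersections $E_i\cap E_j$ need not be connected: each connected component is a separate edge-or-cell of $\D(E)$, and one must check the nerve-with-multiplicities matches Definition \ref{dual.complex.defn}'s regular $\Delta$-complex. Granting that matching, the colimit computing $\pi_1(E)$ becomes literally the Reidemeister–Schreier presentation of $\pi_1(\D(E))$, giving the asserted isomorphism. I would cite \cite[Lem.25]{k-fg} for the clean statement and only sketch this van Kampen argument.
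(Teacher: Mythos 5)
The paper offers no internal argument for this lemma: it is stated with the proof delegated entirely to \cite[Lem.25]{k-fg}, so there is nothing in the text to compare against. Your van Kampen sketch is in substance the standard proof of that reference, and its engine is sound: thicken the $E_i$ to open neighborhoods $U_i$, express $\pi_1(E)$ as a colimit of fundamental groupoids over the overlaps, and observe that killing the images of the $\pi_1(E_i)$ leaves exactly the fundamental group of the combinatorial object. This gives the surjection, and also the isomorphism when the $E_i$ are simply connected: no hypothesis on the deeper strata is needed, because every stratum $W\subset\cap_{i\in J}E_i$ includes into some $E_i$, so in the colimit the image of $\pi_1(W)$ is identified with its image in $\pi_1(E_i)=1$ and all local groups die, leaving $\pi_1$ of the nerve of the poset of strata.

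One sentence in your write-up is literally false and should not stand: the nerve of the cover $\{U_i\}_{i\in I}$ is \emph{not} $\D(E)$ when some $\cap_{i\in J}E_i$ is disconnected, since the nerve records only which index sets have nonempty intersection, whereas $\D(E)$ has one cell per connected component. You flag this yourself as the delicate point, but the proposal leaves it unresolved. The clean fix is to index the decomposition by the poset of strata (connected components of the $\cap_{i\in J}E_i$) rather than by subsets of $I$, and to apply the groupoid van Kampen theorem with a base point in every component of every intersection: the order complex of the poset of strata is the barycentric subdivision of $\D(E)$, so the colimit with trivial local groups is $\pi_1\bigl(\D(E)\bigr)$ on the nose. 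With that replacement your argument closes; the detour in your first paragraph constructing a map $\D(E)\to E$ plays no role and can be deleted.
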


In some cases one can describe a dual complex using a fibration
and finite group actions as in Theorem \ref{p-bir-weak-cor}.2.

\begin{say} \label{top.of.quot.lem}
Let $T$ be a simplicial complex and $G$ a finite group acting
on $T$. Let  $B(T)$ denote the barycentric subdivision of $T$.
If $C\in B(T)$ is a simplex and $g\in G$ such that $g(C)=C$
then $g$ also fixes every vertex of $C$. Thus the $G$ action on
$B(T)$ naturally extends to a simplicial $G$-action on the
topological realization $|B(T)|$. 

The quotient is a regular  complex denoted by $B(T)/G$.
There is a natural map  $|B(T)|\to |B(T)/G|$ whose fibers
are exactly the $G$-orbits on $|B(T)|$.

Such  branched covering spaces
are discussed in  \cite{MR0123298}; we need the following
  properties.

\begin{enumerate}
\item There are natural isomorphisms
$H^i\bigl(|T|/G, \q\bigr)\cong H^i\bigl(|T|, \q\bigr)^G$ for every $i$.
\item There is an exact sequence 
 $$
\pi_1\bigl(|T|\bigr)\to \pi_1\bigl(|T|/G\bigr)\to 
G/\langle \mbox{stabilizers of points}\rangle.
$$
In particular, if $ \pi_1\bigl(|T|\bigr)$ is finite then so is
$ \pi_1\bigl(|T|/G\bigr)$.
\end{enumerate}
\end{say}

The quotient construction naturally arises for
families of simple normal crossing varieties.

\begin{lem} \label{D.from.fiber.lem}
$E=\cup_{i\in I}E_i$ be a  simple normal crossing variety
 and $q:E\to Z$ a morphism
such that every stratum of $E$ dominates $Z$. Let $z\in Z$ be a general point
and $E_z$ the fiber over $z$. Then $E_z$ is a simple normal crossing variety
and there is a finite group $G$ acting
on $\D(E_z)$ such that  $\D(E)=\D(E_z)/G$.
\end{lem}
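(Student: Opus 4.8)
The plan is to produce the group $G$ as the monodromy of a generically finite covering obtained from the strata of $E$, and to identify the dual complex of the central fiber with the quotient of the dual complex of a general fiber under this monodromy. First I would pass to a suitable open subset $Z^\circ\subset Z$ over which everything behaves well: shrinking $Z$, I may assume $Z$ is smooth, irreducible, and that for every subset $J\subset I$ the restricted map $q\colon \cap_{i\in J}E_i\to Z$ is flat with smooth (though possibly disconnected) fibers, and moreover that the formation of the irreducible components of $\cap_{i\in J}E_i$ commutes with base change to points of $Z$. Since every stratum of $E$ dominates $Z$, after this shrinking the general fiber $E_z$ is again a simple normal crossing variety whose strata are exactly the fibers over $z$ of the strata of $E$ (this uses generic smoothness in characteristic $0$, or the stated hypotheses otherwise).

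The next step is to build the group $G$. For each $J\subset I$, let $S_J\to Z$ be the finite \'etale scheme whose fiber over $z$ parametrizes the irreducible components of $(\cap_{i\in J}E_i)_z$; the normalization of $\cap_{i\in J}E_i$ gives such an $S_J$, and the inclusions $E_i\subset E_j$ among the closed strata induce, over $Z$, a compatible system of maps $S_J\to S_{J'}$ for $J\supset J'$. Passing to a connected finite \'etale cover $\tilde Z\to Z$ (for instance a Galois cover dominating all the $S_J$ simultaneously), every $S_J\times_Z\tilde Z$ becomes a disjoint union of copies of $\tilde Z$, i.e.\ the combinatorial data of $\D(E_z)$ becomes constant over $\tilde Z$. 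Let $G$ be the quotient of $\pi_1(Z,z)$ (equivalently $\gal(\tilde Z/Z)$ for this choice) that acts on the fiber system $\{(S_J)_z\}$; this is a finite group, and the monodromy action is by automorphisms of the indexing data of the dual complex, i.e.\ by simplicial automorphisms of $\D(E_z)$. Concretely, $G$ permutes the vertices $[E_i]$ (fixing each, since the $E_i$ are globally defined irreducible components of $E$) but permutes the higher-dimensional cells, which correspond to irreducible components of deeper intersections that need not be globally irreducible.

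Finally I would check the identification $\D(E)=\D(E_z)/G$. A cell of $\D(E)$ of dimension $|J|-1$ is an irreducible component $F$ of $\cap_{i\in J}E_i$; since $F$ dominates $Z$ and $F\to Z$ is \'etale over $z$ (after the shrinking above, using flatness and smoothness of the strata), $F_z$ is a disjoint union of cells of $\D(E_z)$ forming exactly one $G$-orbit, and this correspondence is compatible with the face maps. Hence sending $F$ to its $G$-orbit of components over $z$ gives a bijection between cells of $\D(E)$ and $G$-orbits of cells of $\D(E_z)$, respecting incidences; formally this is the statement that $\D(E)$ is the quotient regular $\Delta$-complex in the sense of \ref{top.of.quot.lem}, so $\D(E)=\D(E_z)/G$ as CW-complexes (and PL-complexes).

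The main obstacle I expect is the bookkeeping needed to guarantee, after shrinking $Z$, that the irreducible components of all the intersections $\cap_{i\in J}E_i$ stay irreducible and unramified over the generic point in a way compatible with all face maps simultaneously — in other words, that one connected finite \'etale cover $\tilde Z\to Z$ can be chosen to trivialize the monodromy on the entire system of strata at once, and that no component of $E_z$ is ``created'' or ``destroyed'' under specialization. In characteristic $0$ this follows from generic smoothness applied finitely many times; without that hypothesis one must invoke the flatness and smoothness assumptions that are built into the setup of \ref{dual.complex.defn} and the ambient degeneration. The verification that the resulting identification is a PL-homeomorphism, not merely a homeomorphism, is routine once the simplicial structures are matched up, since $G$ acts simplicially on the barycentric subdivision as in \ref{top.of.quot.lem}.
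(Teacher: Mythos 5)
Your proof is correct and follows essentially the same route as the paper: shrink $Z$ so that $q$ is smooth (\'etale, on the level of component schemes) over every point of the base on every stratum, observe that $z\mapsto\D(E_z)$ is then a locally constant family whose finite monodromy group $G$ acts simplicially, and identify the cells of $\D(E)$ with the $G$-orbits of cells of $\D(E_z)$; your algebraic packaging via the finite \'etale schemes $S_J$ and a Galois cover $\tilde Z\to Z$ is precisely the reformulation the paper itself offers through Lemmas \ref{gen.fiber.lem} and \ref{gal.ext.lem}. One small slip in your parenthetical: $G$ need not fix the vertices of $\D(E_z)$, since the fiber $(E_i)_z$ of a globally irreducible component $E_i$ can be disconnected and $G$ may permute its components; this does not affect your argument, which already treats the case $|J|=1$ uniformly with the higher-dimensional cells.
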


Proof. By shrinking $Z$ we may assume that $q$ is smooth on every stratum
of $E$. Then $z\mapsto \D(E_z)$ defines a locally trivial fiber bundle.
Since $\D(E_z)$ is a finite simplicial complex, the monodromy of
the fiber bundle is a finite group $G$ and $\D(E)=\D(E_z)/G$.\qed

\medskip

Algebraically minded readers may prefer to think of
Lemma \ref{D.from.fiber.lem} as a combination of the next
two claims.

\begin{lem} \label{gen.fiber.lem}
$E=\cup_{i\in I}E_i$ be a  simple normal crossing variety over a field $k$
 and $q:E\to Z$ a morphism.
Then the generic fiber   $E_{k(Z)}$ is a 
simple normal crossing variety over the function field $k(Z)$ and
$\D(E_{k(Z)}) $ is a subcomplex of $\D(E) $.
Furthermore, if every stratum dominates 
$Z$ then 
$\D(E_{k(Z)})=\D(E)$. \qed
\end{lem}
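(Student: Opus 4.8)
The plan is to prove Lemma \ref{gen.fiber.lem} by reducing everything to combinatorics of strata and their closures, using that being a stratum is a Zariski-local notion. First I would set up the generic fiber: since $q: E \to Z$ is a morphism and $E$ is snc over $k$, base changing to $\spec k(Z)$ is flat and preserves the snc condition fiberwise on each affine chart, so $E_{k(Z)}$ is snc over $k(Z)$; its irreducible components are exactly the components $E_i$ that dominate $Z$ (the others have empty generic fiber), and more generally a stratum $F \subset \cap_{i \in J} E_i$ of $E$ has nonempty image in $E_{k(Z)}$ precisely when $F$ dominates $Z$, in which case its generic fiber $F_{k(Z)}$ is a stratum of $E_{k(Z)}$ (irreducibility is preserved because $F \to Z$ has irreducible generic fiber after possibly shrinking, or by a standard argument that the generic fiber of a dominant morphism from an irreducible variety is irreducible). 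This gives a natural injection $\D(E_{k(Z)}) \hookrightarrow \D(E)$ identifying the cell of $F_{k(Z)}$ with the cell of $F$; I would check that the attaching maps match, which is immediate since the "unique irreducible component of $\cap_{i \in J \setminus \{j\}} E_i$ containing $F$" is compatible with passing to generic fibers.

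The second, harder, assertion is that when every stratum of $E$ dominates $Z$, this inclusion of complexes is an equality, i.e. every stratum of $E_{k(Z)}$ arises as the generic fiber of a stratum of $E$. The subtlety is that a priori an irreducible component of $\cap_{i \in J} E_i$ could, over the generic point, break into several components, or a component of the generic fiber could fail to be the generic fiber of a single component upstairs. To handle this I would argue as follows: the strata of $E_{k(Z)}$ are the irreducible components of the intersections $\bigcap_{i \in J'} (E_i)_{k(Z)}$ where $J' \subset I$ indexes those components dominating $Z$ — but by hypothesis \emph{all} strata dominate $Z$, so $J' = I$ works and every $\cap_{i \in J}E_i$ is a union of strata each of which dominates $Z$. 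Now $\bigl(\cap_{i \in J} E_i\bigr)_{k(Z)} = \cap_{i \in J} (E_i)_{k(Z)}$, and I claim taking generic fibers gives a bijection between the irreducible components: surjectivity is clear (every component of the generic fiber is dominated by a component of $\cap_{i \in J}E_i$), and injectivity/non-splitting follows because each irreducible component $F$ of $\cap_{i\in J}E_i$ is itself a stratum, hence dominates $Z$, hence has irreducible — in particular nonempty and distinct — generic fiber; two distinct strata $F \neq F'$ have distinct generic fibers since their intersection is a proper closed subset not dominating... actually more cleanly, $F_{k(Z)}$ is an irreducible closed subset of $\cap_{i\in J}(E_i)_{k(Z)}$ of the same dimension as the corresponding component, so it must be a full component, and distinct $F$'s give distinct ones by counting. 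This shows $\D(E_{k(Z)}) = \D(E)$ as $\Delta$-complexes.

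The main obstacle I expect is the non-splitting of irreducible components under passage to the generic fiber — ensuring that a stratum $F$ of $E$ does not decompose into several strata of $E_{k(Z)}$. The clean way to dispose of this is: a geometrically irreducible variety over $k$ stays irreducible after any field extension, but $F$ need not be geometrically irreducible over $k$. However, what we actually need is that $F \to Z$ has irreducible generic fiber, and this is guaranteed because $F$ is a stratum of a simple normal crossing variety, so $F$ is smooth, hence normal; a dominant morphism from a normal (even just unibranch) irreducible variety has connected, and for $q$ generically smooth on $F$ even irreducible, generic fiber — here I would invoke that after shrinking $Z$ we may assume $q|_F$ is smooth with irreducible generic fiber, or cite \cite[Thm.4.40]{kk-singbook}-style arguments already used in the paper for Fano contractions. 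Once that is in hand the identification of dual complexes is a formal matching of cells and attaching maps, and the "$\D(E_{k(Z)})$ is a subcomplex of $\D(E)$" half needs no domination hypothesis at all, as noted.
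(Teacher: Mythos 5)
The paper offers no proof of Lemma \ref{gen.fiber.lem} (the \qed sits in the statement); it is treated as the evident algebraic reformulation of Lemma \ref{D.from.fiber.lem}, and your cell-by-cell matching of dominating strata of $E$ with strata of $E_{k(Z)}$ is exactly the intended argument. Your overall structure is correct: strata of $E_{k(Z)}$ are irreducible components of $\bigl(\cap_{i\in J}E_i\bigr)_{k(Z)}$, these are the generic fibers of the dominating strata of $E$, the set of dominating strata is closed under passing to larger strata (so one gets a subcomplex), and the attaching maps are compatible.

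Two points of your justification deserve correction, though neither breaks the proof. First, the ``main obstacle'' you isolate --- that a stratum $F$ might split into several strata of $E_{k(Z)}$ --- is a non-issue for purely topological reasons: if $F$ is irreducible and dominates $Z$, then $F_{k(Z)}$ contains the generic point of $F$ and is contained in its closure, hence is irreducible as a $k(Z)$-scheme, which is all that Definition \ref{dual.complex.defn} requires over the non-closed field $k(Z)$. No normality, no smoothness, and no shrinking of $Z$ is needed for this; moreover the statement you lean on, that a dominant morphism from a normal irreducible variety has connected generic fiber, is false in the geometric sense (any connected double cover), and what saves you is only the scheme-theoretic irreducibility just described. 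Second, the snc-ness of $E_{k(Z)}$ over $k(Z)$ is not a formal consequence of ``flat base change,'' since the relevant morphism is $q\colon E\to Z$, not the structure map to $\spec k$; the correct input is generic smoothness of $q$ on every stratum (characteristic $0$), i.e.\ precisely the reduction ``by shrinking $Z$ we may assume $q$ is smooth on every stratum'' that the paper uses to prove Lemma \ref{D.from.fiber.lem}. With these repairs your argument is the paper's implicit one.
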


\begin{lem} \label{gal.ext.lem}
Let $K/k$ be a Galois extension with Galois group $G$.
Let $E_k$ be a  simple normal crossing variety over  $k$.
 Then $G$ acts on
$\D(E_K)$ and
$\D(E_k)=\D(E_K)/G$. \qed
\end{lem}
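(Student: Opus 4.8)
The plan is to reduce everything to the combinatorics of strata and their base-change behavior. First I would set up notation: let $E_k$ have irreducible components $\{E_i : i\in I\}$, and let $E_K := E_k\times_k K$. The absolute Galois action on $\overline k$ (or rather the $G$-action coming from $\spec K\to \spec k$) acts on $E_K$ compatibly with the projection $E_K\to E_k$, hence permutes the irreducible components and, more generally, the strata of $E_K$. The first point to nail down is that $E_K$ is again simple normal crossing over $K$: this is because ``snc'' can be checked on geometric points, so it is insensitive to the field extension; a component $E_i$ may of course break up into several $K$-components $E_{i,1},\dots,E_{i,r_i}$, but since $E_i$ is smooth (being a component of an snc variety) these are exactly the connected components of $(E_i)_K$, and they meet the other components transversally because $E_i$ did.

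The heart of the argument is a bijection, compatible with the attaching maps, between the cells of $\D(E_k)$ and the $G$-orbits of cells of $\D(E_K)$. A cell of $\D(E_k)$ of dimension $|J|-1$ is an irreducible component $F$ of $\cap_{i\in J}E_i$ for some $J\subset I$. Base-changing, $F_K=F\times_k K$ is a disjoint union of $K$-irreducible components, and these are permuted transitively by $G$ (this transitivity is the standard fact that the Galois group acts transitively on the geometric components lying over a given $k$-component of a $k$-variety). Each such $K$-component is a stratum of $E_K$ — it lies in $\cap_{i\in J}(E_i)_K$, and one checks it is a component of an intersection of $|J|$ distinct $K$-components among the $E_{i,s}$ — so it is a cell of $\D(E_K)$ of the same dimension $|J|-1$. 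Conversely every stratum of $E_K$, being $G$-stable as a set under the orbit, descends (its image, or equivalently the component of the corresponding $k$-intersection it maps onto) to a stratum of $E_k$. This gives the cell-wise bijection $\{\text{cells of }\D(E_k)\} \longleftrightarrow \{\text{cells of }\D(E_K)\}/G$. Finally one must check this is an isomorphism of $\Delta$-complexes, i.e. that it is compatible with the face/attaching maps of Definition \ref{dual.complex.defn}: for $j\in J$, the unique component of $\cap_{i\in J\setminus\{j\}}E_i$ containing $F$ base-changes to, and its $K$-components are exactly, the unique-containing-components of the faces of the $K$-cells over $F$ — so the attaching data matches orbit-by-orbit. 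Invoking \ref{gal.ext.lem}-style functoriality (really just unwinding the definition of the quotient complex as in \ref{top.of.quot.lem}), we conclude $\D(E_k)=\D(E_K)/G$.

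I expect the main obstacle to be the bookkeeping in the two directions of the stratum correspondence — specifically, verifying that an irreducible component of $\cap_{i\in J\setminus\{j\}}(E_i)_K$ containing a given $K$-stratum over $F$ really does lie over the analogous component downstairs, and that no two distinct $K$-strata in the same $G$-orbit get glued to different cells. This is where one genuinely uses that the $E_i$ are smooth (so components = connected components after base change) and that $G$ acts transitively on the fiber of components. Once the cell bijection and attaching-map compatibility are in place, the identification $\D(E_k)=\D(E_K)/G$ is formal from the definition of the quotient $\Delta$-complex; there is no topology to do beyond that.
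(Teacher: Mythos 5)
Your argument is correct and is the standard Galois-descent proof; the paper states this lemma with a \qed and no proof, treating it as immediate, so there is nothing to compare against. The only point worth making fully explicit --- which your smoothness remark already contains in substance --- is that because the $K$-components of each $(E_i)_K$ are pairwise disjoint (as $E_i$ is smooth) and $G$ preserves each $(E_i)_K$, the stabilizer in $G$ of a cell of $\D(E_K)$ fixes that cell's vertices, so the naive cell-orbit quotient coincides with the topological quotient and no barycentric subdivision (as in \ref{top.of.quot.lem}) is needed to identify it with $\D(E_k)$.
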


\section{Homology of  the dual complex}

The following proves  (\ref{main.top.thm}.2).

\begin{prop} \label{dlt.logcy.RHS.cor} Let $(X, \Delta)$ be a  logCY pair. 
Then 
$$
H^i\bigl(\DMR(X,\Delta), \q\bigr)=0\qtq{for} 0<i<\dim \DMR(X,\Delta).
$$
\end{prop}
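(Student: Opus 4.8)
The plan is to reduce to the Fano--type dlt case where $\Delta=\Delta^{=1}$ supports a big and semi-ample divisor, and then apply the homological comparison Lemma \ref{friedman??.lem} together with vanishing theorems.

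\medskip

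\textbf{Step 1: Reduction.} First I would dispose of the low-dimensional and disconnected cases: if $\DMR(X,\Delta)$ is disconnected, then by \ref{disconnect.say} it is ${\mathbb S}^0$ and there is nothing to prove; if $\dim_{\r}\DMR(X,\Delta)\le 1$ the statement is vacuous or trivial. So assume connectedness and $\dim_{\r}\DMR(X,\Delta)\ge 2$. By Theorem \ref{fkx.thm.thm}, the dual complex is a crepant-birational invariant, so I may replace $(X,\Delta)$ by any crepant birational model. Using Theorem \ref{p-bir-weak-cor}, I pass to a model where either (maximal intersection case) $\Delta_s^{=1}$ fully supports a big and semi-ample divisor, or (general case) there is a fibration $q_s\colon X_s\to Z$ whose generic fiber $(X_{k(Z)},\Delta_{k(Z)})$ has dual complex PL-homeomorphic to $\DMR(X,\Delta)$ and with $\Delta_{k(Z)}$ fully supporting a big, semi-ample divisor. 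In the general case, Lemma \ref{gen.fiber.lem}/\ref{gal.ext.lem} (or \ref{D.from.fiber.lem}) lets me work on the generic fiber and, since $\q$-cohomology is unaffected by passing to the algebraic closure (the $G$-invariants statement \ref{top.of.quot.lem}(1) shows $H^i$ of the quotient is a summand, and a summand of a vanishing group vanishes), it suffices to treat the maximal intersection case. I may further assume $(X,\Delta)$ is $\q$-factorial dlt with $\Delta=\Delta^{=1}$, so that by \cite[Thm.3]{dkx} the dual complex is computed directly from the simple normal crossing-like stratification of $\Delta^{=1}$; by taking a further log resolution I reduce to $E:=\Delta^{=1}$ being an honest proper snc variety over $\c$.

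\medskip

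\textbf{Step 2: Homology via Lemma \ref{friedman??.lem}.} Now I want to compute $H^i(\D(E),\q)$. Lemma \ref{friedman??.lem} gives a natural injection $H^i(\D(E),\c)\hookrightarrow H^i(E,\o_E)$, which is an isomorphism once $H^r\bigl(\cap_{i\in J}E_i,\o_{\cap_{i\in J}E_i}\bigr)=0$ for all $r>0$ and all $J\subset I$. Each stratum $W=\cap_{i\in J}E_i$ is, by \ref{delt.defn.say}, a logCY pair $(W,\diffg_W\Delta)$ which is again dlt; moreover since $X$ is rationally connected (arranged in \ref{rc.say.1}) and $q_s$ is a Fano contraction structure, these strata are rationally connected varieties, hence $H^r(W,\o_W)=0$ for $r>0$ by \cite{mi-mo} (rational connectedness forces $h^r(\o)=0$). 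So the hypotheses of Lemma \ref{friedman??.lem} hold and $H^i(\D(E),\c)\cong H^i(E,\o_E)$ for all $i$. Thus it remains to show $H^i(E,\o_E)=0$ for $0<i<\dim\D(E)$.

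\medskip

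\textbf{Step 3: Vanishing of $H^i(E,\o_E)$.} Here is where I use that $E=\Delta^{=1}$ fully supports a big and semi-ample divisor $H$, together with $K_X+\Delta\sim 0$. On the ambient $X$, adjunction gives $\o_E\cong \o_X(K_X+\Delta)|_E\otimes\o_E(-K_X-\Delta+\dots)$; more precisely, from $0\to\o_X(-\Delta^{=1})\to\o_X\to\o_E\to 0$ and $K_X+\Delta^{=1}\sim 0$, I get $\o_X(-\Delta^{=1})\cong\o_X(K_X)$, so the sequence reads $0\to\o_X(K_X)\to\o_X\to\o_E\to 0$. Then $H^i(E,\o_E)$ sits between $H^i(X,\o_X)$ and $H^{i+1}(X,K_X)$. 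Since $X$ is rationally connected, $H^i(X,\o_X)=0$ for $i>0$, and by Serre duality $H^{i+1}(X,\o_X(K_X))\cong H^{n-i-1}(X,\o_X)^\vee$, which vanishes for $n-i-1>0$, i.e. for $i<n-1$. This gives $H^i(E,\o_E)=0$ for $0<i<n-1=\dim X-1$. But in the maximal intersection case $\dim\D(E)=\dim X-1$, so this is exactly the range $0<i<\dim\D(E)$ that I need. (If the max-intersection reduction only gives $\dim\D(E)=\dim X_{k(Z)}-1$ on the fiber, the same computation applies verbatim on the generic fiber, whose dimension matches $\dim\DMR+1$.)

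\medskip

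\textbf{Main obstacle.} The delicate point is Step 1: making sure the reductions via Theorem \ref{p-bir-weak-cor} and passage to the generic fiber genuinely preserve the $\q$-cohomology of the dual complex in the relevant degree range, and that one really lands in a situation where $E$ can be taken snc with rationally connected strata so that Lemma \ref{friedman??.lem} applies as an isomorphism. In particular one must check that the dimension bookkeeping — $\dim\DMR=\dim(\text{fiber})-1$ after the fibration reduction — is correct, since the vanishing range in Step 3 is tight. The actual vanishing (Steps 2--3) is then a routine application of Lemma \ref{friedman??.lem}, adjunction, rational connectedness, and Serre duality.
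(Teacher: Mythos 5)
Your overall strategy is the paper's: reduce via Theorem \ref{p-bir-weak-cor} (plus Lemmas \ref{D.from.fiber.lem} and \ref{top.of.quot.lem}) to a model where $\Delta^{=1}$ fully supports a big and semi-ample divisor, then feed the vanishing of $H^i\bigl(\Delta^{=1},\o_{\Delta^{=1}}\bigr)$ through Lemma \ref{friedman??.lem}. But Step 3 has a genuine gap. You write the ideal sequence as $0\to\o_X(K_X)\to\o_X\to\o_E\to 0$, which presupposes $-\Delta^{=1}\sim K_X$ as an \emph{integral linear} equivalence. This fails in two ways in general: (i) if $\Delta^{<1}\neq 0$ then $-\Delta^{=1}\simq K_X+\Delta^{<1}$, not $K_X$; and (ii) even when $\Delta=\Delta^{=1}$, a logCY pair only satisfies $K_X+\Delta\simq 0$, i.e.\ $m(K_X+\Delta)\sim 0$ for some $m>1$ in general, so $\o_X(-\Delta^{=1})\not\cong\o_X(K_X)$ and Serre duality does not apply to it. The paper explicitly flags exactly these two problems and resolves them by \emph{using} the big and semi-ample divisor $M$ supported on $\Delta^{=1}$: writing $-\Delta^{=1}\simq K_X+\epsilon M+(\Delta-\epsilon M-\Delta^{=1})$ with $(X,\Delta-\epsilon M)$ klt, a Kawamata--Viehweg-type vanishing gives $H^i\bigl(X,\o_X(-\Delta^{=1})\bigr)=0$ for $i<n$ without any integrality or index-one hypothesis. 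You announce that you will use the full support of a big and semi-ample divisor, but your computation never actually invokes it --- it only works in the special case $\Delta=\Delta^{=1}$, $K_X+\Delta\sim 0$, which is the paper's warm-up case, and Theorem \ref{p-bir-weak-cor} does not put you in that case.

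Two secondary points. First, you only need the \emph{injection} of (\ref{friedman??.lem}.1), so your Step 2 claim that every stratum is rationally connected is superfluous --- and it is not justified outside the maximal intersection case (a minimal stratum can be a Calabi--Yau with $H^{\dim W}(W,\o_W)\neq 0$; cf.\ Claim \ref{top.coh.say}.1). Relatedly, the reduction does not land you in the maximal intersection case: Theorem \ref{p-bir-weak-cor}.2 controls the support of a big semi-ample divisor on the generic fiber, not the dimension of its dual complex. This does no harm to the conclusion, since vanishing for $0<i<n-1$ always covers the range $0<i<\dim\DMR(X,\Delta)\leq n-1$, but the argument should be phrased that way rather than via a (false) maximal-intersection reduction. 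Second, passing to a log resolution to make $\Delta^{=1}$ honestly snc can destroy the property that $\Delta^{=1}$ fully supports a big and semi-ample divisor (exceptional divisors of discrepancy $>-1$ over $\supp\Delta^{=1}$ enter the support of the pullback but not $\Delta_Y^{=1}$); the paper instead works with the dlt pair directly.
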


Proof.  
Assume first that $X$ is rationally
connected, $\Delta=\Delta^{=1}$ and $ K_X+\Delta^{=1}\sim 0$. 
Set $n:=\dim X$. Then 
$H^i(X, \o_X)=0$ for $i> 0$ by \cite{MR1094468, KMM92a} and 
$$
H^i\bigl(X, \o_X(-\Delta^{=1})\bigr)=H^i\bigl(X, \o_X(K_X)\bigr)=0
\qtq{for} i< n
$$
 by Serre duality.
The long cohomology sequence of the exact sequence
$$
0\to \o_X(-\Delta^{=1})\to \o_X\to \o_{\Delta^{=1}}\to 0
\eqno{(\ref{dlt.logcy.RHS.cor}.1)}
$$
now implies that $H^i\bigl(\Delta^{=1}, \o_{\Delta^{=1}}\bigr)=0$
for $0<i<n-1$. Thus
$H^i\bigl(\DMR(X,\Delta), \q\bigr)=0 $
for $0<i<n-1$ by Lemma \ref{friedman??.lem}.

We try to use a similar argument in general; the problem is that,
in (\ref{dlt.logcy.RHS.cor}.1),
instead of $\o_X(K_X)$ we have 
$\o_X(-\Delta^{=1})$ and $-\Delta^{=1}$ is $\q$-linearly
equivalent to $K_X+\Delta^{<1}$. The presence of the
fractional part $\Delta^{<1}$ and the $\q$-linear
(as opposed to linear) equivalence both cause problems.

Assume next that $(X, \Delta)$  is dlt and 
$\Delta^{=1}$ fully supports a big and semi-ample
divisor $M$. Note that
$$
0\simq K_X+\epsilon M+ \bigl(\Delta-\epsilon M)
\qtq{and}
\Delta^{=1}\sim \epsilon M + \bigl(\Delta^{=1}-\epsilon M),
$$
thus by vanishing we see that
$$
H^i(X, \o_X)=0\qtq{for $i> 0$ and}
H^i\bigl(X, \o_X(-\Delta^{=1})\bigr)=0\qtq{for $i< n$.}
$$
Using (\ref{dlt.logcy.RHS.cor}.1) these imply that 
$H^i\bigl(\Delta^{=1}, \o_{\Delta^{=1}}\bigr)=0$
for $0<i<n-1$.

Finally, by Theorem \ref{fkx.thm.thm}, we are free to replace
$(X, \Delta)$ with any other logCY pair that is crepant
birational to it.
We first  apply Theorem \ref{p-bir-weak-cor}, then  
  Lemmas \ref{D.from.fiber.lem} and \ref{top.of.quot.lem}
to reduce to the already established case when
$\Delta^{=1}$ fully supports a big and semi-ample
divisor.\qed 
\medskip

Next we study the top cohomology of
the structure sheaf and of the dual complex.

\begin{say}[The top cohomology of  logCY pairs]\label{top.coh.say}
Let $(X, \Delta)$ be a dlt, logCY pair of dimension $n$.
Then $H^n(X, \o_X)$ is  Serre dual to $H^0\bigl(X, \o_X(K_X)\bigr)$
hence $H^n(X, \o_X)=0$ save when $\Delta=0$ and $K_X\sim 0$. 

Let  $D\subset \Delta^{=1}$ be an irreducible component such that
$H^{n-1}(D, \o_D)\neq 0$. As we noted above, then $\diffg_D\Delta=0$,
thus $D$ is a connected component
of $\supp \Delta$. As we noted in Paragraph \ref{disconnect.say},  
there are 2 possibilities.
Either $ \Delta^{=1}=D$ or $\Delta^{=1}=D+D'$ has 2 irreducible components
 which are crepant birational to each other. 

Let $(Y, \Delta_Y)$ be a dlt pair.
Set $X:= \Delta_Y^{=1}$ and assume that   $\bigl(C\cdot (K_Y+\Delta_Y)\bigr)=0$
for every curve $C\subset X$.  We can then view
$(X,\Delta:=\diffg_X\Delta_Y)$ as a reducible logCY pair. 
A non-embedded definition of such pairs, called {\it semi-dlt} pairs, is given in
\cite[Sec.5.4]{kk-singbook}. The precise definition is not important for now,
we will only use the case when $X= \Delta_Y^{=1}$ as above.

Using these observations inductively, we get the following.
\medskip

{\it Claim \ref{top.coh.say}.1.} 
 Let $(X, \Delta)=\cup_i (X_i, \Delta_i)$ be a connected, semi-dlt, 
logCY pair of dimension $n$. Assume that it has a stratum
$W\subset X$ of dimension $r$ such that $H^r(W, \o_W)\neq 0$. 
Then  
\begin{enumerate}
\item all strata have dimension $\geq r$,
\item the  $r$-dimensional  strata are crepant birational to each other and
\item $\dim\D(X)= n-r-1$. \qed
\end{enumerate}
\medskip

Let $X=\cup_{i\in I}X_i$ be a simple normal crossing variety.
The cohomology of $\o_X$ is computed by a spectral sequence
whose $E_1$ terms are
$$
E_1^{pq}=H^q\bigl(X_p, \o_{X_p}\bigr)\qtq{where}
X_p:=\coprod_{J\subset I, |J|=p+1}\ \cap_{i\in J} X_i.
\eqno{(\ref{top.coh.say}.2)}
$$
In the bottom row $q=0$ we find the complex that computes the
cohomology of $\D(X)$. Note also that if $H^{\dim W}(W, \o_W)= 0$
holds for every positive dimensional stratum then
the only term that contributes to $H^{n}(X, \o_X)$ 
is $E_2^{n0}=H^n\bigl(\D(X), \c\bigr)$. We have thus proved the following.

\medskip

{\it Claim \ref{top.coh.say}.3.} 
 Let $(X, \Delta)=\cup_i (X_i, \Delta_i)$ be a connected, semi-dlt, 
logCY pair of dimension $n$ such that $H^{n}(X, \o_X)\neq 0$.
Then \begin{enumerate}
\item either  $H^n\bigl(\D(X), \q\bigr)=\q$, 
\item or $\dim\D(X)<n$.
\qed
\end{enumerate}

\end{say}

Next we prove Proposition \ref{low.dim.prop}.

\begin{say}[Dimension induction] \label{low.dim.pfs}
Let $(X, \Delta)$ be a dlt logCY pair of dimension $n$ 
 such that $K_X+\Delta\sim 0$.  
As we noted in Paragraph \ref{delt.defn.say},
 the local structure of $\DMR(X, \Delta)$ is determined
by the lower dimensional logCY pairs. 

Assume first that $(X, \Delta)$ has maximal intersection.
Using Proposition \ref{dlt.logcy.RHS.cor} and Paragraph \ref{top.coh.say} we see that 
$H^{n-1}\bigl(\DMR(X, \Delta), \q\bigr)=\q$ hence
$\DMR(X, \Delta)$ is a  rational homology sphere. 
In low dimensions we obtain complete answers to  Question \ref{main.ques}.

If $n=1$ then $X=\p^1$ and $\DMR(X, \Delta)\simeq {\mathbb S}^0$.

If $n=2$ then $X$ is a 
rational surface and $\DMR(X, \Delta)\simeq {\mathbb S}^1$.

If $n=3$ then $\DMR(X, \Delta)$ is a  2--manifold 
that is a rational homology sphere. 
Thus $\DMR(X, \Delta)\simeq {\mathbb S}^2$.

If $n=4$ then $\DMR(X, \Delta)$ is a  3--manifold 
that is a rational homology sphere. 
The fundamental group of a 3--manifold is residually finite
\cite{MR895623}, thus (\ref{main.top.thm}.4) implies that
the fundamental group itself is finite. By (\ref{main.top.thm}.5)
the universal cover is a simply connected homology sphere,
thus $\widetilde{\DMR}(X, \Delta)\simeq {\mathbb S}^3$.
(This uses the Poincar\'e conjecture.)
Note however that we do not claim that 
$\DMR(X, \Delta)$ is the sphere $ {\mathbb S}^3$.

Thus, starting with $n=5$ we do not claim that $\DMR(X, \Delta)$
is a manifold.

We can, however,  do better if $(X, \Delta)$ is a simple normal crossing pair.
In this case Theorem \ref{pi1Xns.to.pi1D.thm.mi}
implies that $\DMR(X, \Delta)$ and its links are simply connected.
Thus we get that $\DMR(X, \Delta)$ is homeomorphic to a sphere 
if $\dim X\leq 5$. (Conjecturally, PL-homeomorphic to a sphere.)

If $n=6$ then $\DMR(X, \Delta)$ is a  5--manifold 
that is a simply connected rational homology sphere. 
Our results say nothing about the torsion group
$H_2\bigl(\DMR(X, \Delta), \z\bigr)$.

Finally consider the case when $(X, \Delta)$ 
does not have  maximal intersection. A similar induction shows that,
for $\dim \DMR(X, \Delta)\leq 3$, the universal cover is
a simply connected manifold, possibly with boundary. Thus  
$\DMR(X, \Delta)$ is either a sphere or a ball.
\end{say}

\begin{say}\label{low.dim.deg.prop.pf}
Let $g:Y\to {\mathbb D}$ be a 
CY-degeneration of relative dimension $n$ 
satisfying the conditions (\ref{CY.deg.1.say}.1--2). 

If $X_i\subset Y_0$ is an irreducible component 
and $D_i\subset X_i$ is the intersection of $X_i$ with the other components
then $(X_i, D_i)$ is a logCY pair and
$\DMR(X_i, D_i)$ is PL-homeomorphic to the link of
$[X_i]\in \D(Y_0)$. Thus if $n\leq 3$ or if $n\leq 5$ and
the central fiber is a simple normal crossing divisor
then  $\D(Y_0)$ is a manifold using the results of Paragraph
\ref{low.dim.pfs}. 

Since $Y_0$ has Du~Bois singularities  (cf.\ \cite[Chap.6]{kk-singbook})
we see that $h^i(Y_0, \o_{Y_0})=h^i(Y_t, \o_{Y_t})=0$ for
$0<i<n$, thus $\D(Y_0)$ is a rational homology sphere.

Finally, there are natural surjections
$$
\pi_1\bigl(Y\setminus Y_0\bigr)\onto \pi_1(Y)\cong \pi_1(Y_0)
\onto \pi_1\bigl(\D(Y_0)\bigr).
$$
We assumed that $Y_t$ is simply connected for $t\neq 0$, hence
$\pi_1\bigl(Y\setminus Y_0\bigr) $ is a quotient of
$\pi_1\bigl({\mathbb D}\setminus\{0\}\bigr)\cong \z$.
Since $Y_0$ is reduced, $Y\to {\mathbb D}$ has  a section,
thus $\pi_1\bigl({\mathbb D}\setminus\{0\}\bigr)$  gets killed
in $\pi_1(Y_0) $.

These imply that $\pi_1\bigl(\D(Y_0)\bigr) $ is trivial
and so $\D(Y_0)$ is a simply connected rational homology sphere.
For $n\leq 4$ this implies that it is 
homeomorphic to a sphere
 (conjecturally PL-homeomorphic to a sphere).
This completes the proof of Proposition 
\ref{low.dim.deg.prop}.
\end{say}

\section{Fundamental groups of logCY pairs}

In this section we study various fundamental groups associated to a
logCY pair. It is easy to see that usually $X$ itself is simply connected.
It is much more interesting to understand the  fundamental group
of the smooth locus $\pi_1\bigl(X^{\rm sm}\bigr) $
and the fundamental group of the dual complex.
Note that while the latter is a crepant-birational invariant,
the fundamental group
of the smooth locus is not; see Example \ref{pi1.not.birinv.exmp}.

\begin{say}[General set-up] \label{pi1.setup.say}
Let $X\subset \c\p^N$ be a normal, projective variety
and $D\subset X$ a divisor.  Fix a smooth metric on $\c\p^N$ and 
for $0<\delta\ll 1$ let $D_{\delta}\subset X$ denote the
$\delta$-neighborhood of $D$.  Then $D$ is a deformation retract
of $D_{\delta}$, hence $ \pi_1(D_{\delta})\cong \pi_1(D)$. 
If $(X,D)$ is snc or  dlt then we can form the dual complex $\D(D)$
and we have a surjection
$$
\pi_1(D_{\delta})\cong \pi_1(D)\onto  \pi_1\bigl(\D(D)\bigr).
$$
If $Z\subset X\setminus D_{\delta}$ is any closed subset then there
is a natural map $L_D: \pi_1(D_{\delta})\to \pi_1(X\setminus Z)$.
Very little is known about this map in general but if $D$ is ample
and $\dim X\geq 3$ then it is an isomorphism 
by the Lefschetz hyperplane theorem for large enough finite $Z\subset \sing X$. Thus we have the following inclusions
$$
X^{\rm sm}\into X\setminus Z \hookleftarrow D_{\delta} \hookleftarrow D
$$
and these induce maps on the fundamental groups
$$
\pi_1\bigl(X^{\rm sm}\bigr)\onto \pi_1(X\setminus Z)
\stackrel{L_D}{\longleftarrow} \pi_1(D_{\delta})\cong
\pi_1(D)\onto \pi_1\bigl(\D(D)\bigr).
\eqno{(\ref{pi1.setup.say}.1)}
$$
Thus if $L_D$ is an isomorphism then we obtain surjections
$$
\pi_1\bigl(X^{\rm sm}\bigr) \onto \pi_1(X\setminus Z)\onto
\pi_1(D)\onto \pi_1\bigl(\D(D)\bigr).
\eqno{(\ref{pi1.setup.say}.2)}
$$
We would like to apply this to $(X, \Delta^{=1})$ for a 
 logCY pair  $(X, \Delta)$. Typically $\Delta^{=1} $ is quite negative
but Theorem \ref{p-bir} shows that ampleness can be achieved for a
suitable crepant birational model  $(\bar X, \bar\Delta)$. Unfortunately
$(\bar X, \bar\Delta) $  is not dlt. We thus have to find a dlt
model $(X_s, \Delta_s)$ where $\Delta_s^{=1} $ is close enough to
being ample that the above arguments apply. An extra complication is
that $ \pi_1\bigl(X^{\rm sm}\bigr)$ is not a birational invariant.
At the end we prove (\ref{pi1.setup.say}.2) for $D=\Delta^{=1} $.

If (\ref{pi1.setup.say}.2) holds then any finite covering space
  $\widetilde{\D(D)}\to \D(D)$ lifts to a finite,  \'etale
cover of $X\setminus Z$, hence to a finite, possibly ramified  cover  
$p: \tilde X\to X$ that is \'etale along $D$. 
If every prime divisor $E\subset X$
has non-empty intersection with $D$ then $p$ is 
quasi-\'etale.
If $(X, D)$ is dlt, logCY and  $p$ is 
quasi-\'etale then   $\bigl(\tilde X, \tilde D:=p^{-1}(D)\bigr)$
is also logCY
and $\D\bigl(\tilde D\bigr)=\widetilde{\D(D)}$.

This gives a realization of covering spaces of $\D(D)$ 
as  dual complexes of quasi-\'etale covers of $(X,D)$.  
Thus (\ref{pi1.setup.say}.2) for   $D=\Delta^{=1} $  implies (\ref{main.top.thm}.5).
\end{say}

The main result of this section is the following restatement of (\ref{main.top.thm}.3).

\begin{thm} \label{pi1Xns.to.pi1D.thm.mi}
Let $(X, \Delta)$ be a dlt logCY pair 
such that $\dim \DMR(X,\Delta)\geq 2$.
Then there  are surjections
$$
\pi_1\bigl(X^{\rm sm}\bigr)\onto \pi_1(\Delta^{=1})\onto \pi_1\bigl(\DMR(X,\Delta)\bigr).
$$
\end{thm}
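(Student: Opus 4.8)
The plan is to reduce the general dlt logCY pair to a model where $\Delta^{=1}$ is close to ample, so that the Lefschetz-type machinery of Paragraph \ref{pi1.setup.say} applies, and then transport the conclusion back along the crepant-birational equivalence using the invariance of the dual complex (Theorem \ref{fkx.thm.thm}). Concretely, the second surjection $\pi_1(\Delta^{=1})\onto \pi_1(\DMR(X,\Delta))$ is immediate from Lemma \ref{pi1.pf.D.lem} together with the fact that for a dlt pair $\DMR(X,\Delta)$ is computed by $\Delta^{=1}$ (Paragraph \ref{delt.defn.say}), so the content is entirely in producing the first surjection $\pi_1(X^{\rm sm})\onto \pi_1(\Delta^{=1})$.

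First I would run the reduction steps of Section \ref{sec.reduce}: using Paragraph \ref{disconnect.say} we may assume $\DMR(X,\Delta)$ is connected; using Paragraph \ref{rc.say.1} (and Proposition \ref{maps.onot.nonur.prop}, Lemma \ref{gen.fiber.lem}) we may pass to the generic fiber of the MRC fibration and assume $X$ is rationally connected; by Corollary following Theorem \ref{p-bir-weak-cor} we may assume $\Delta=\Delta^{=1}$; and by Theorem \ref{p-bir-weak-cor} we may assume (in the maximal intersection case, which is the relevant one since $\dim\DMR\geq 2$ forces us into a nondegenerate situation, with the non-maximal case handled by passing to the generic fibre as in \ref{p-bir-weak-cor}.2 and applying Lemmas \ref{D.from.fiber.lem} and \ref{top.of.quot.lem}) that $\Delta^{=1}$ fully supports a big and semi-ample divisor. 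The key input I would then invoke is Theorem \ref{p-bir}: there is a crepant birational model $(\bar X,\bar\Delta)$ on which (the transform of) $\Delta^{=1}$ fully supports an \emph{ample} divisor. Since $(\bar X,\bar\Delta)$ need not be dlt, I would take a dlt modification $(X_s,\Delta_s)\to(\bar X,\bar\Delta)$, and argue that $\Delta_s^{=1}$ is the full preimage of $\bar\Delta^{=1}$, hence still fully supports a big and semi-ample (even big and mobile over $\bar X$) divisor, and moreover that the morphism $(X_s,\Delta_s)\to(\bar X,\bar\Delta)$ is an isomorphism near every lc centre, so that $\DMR(X_s,\Delta_s)\simeq\DMR(\bar X,\bar\Delta)\simeq\DMR(X,\Delta)$.

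On the model $(X_s,\Delta_s)$ I would apply the set-up of Paragraph \ref{pi1.setup.say}: choosing an effective ample $H$ with $\supp H=\supp\Delta_s^{=1}$ and a finite set $Z\subset\sing X_s$ large enough, the Lefschetz hyperplane theorem gives that $L_{\Delta_s^{=1}}:\pi_1((\Delta_s^{=1})_\delta)\to\pi_1(X_s\setminus Z)$ is an isomorphism (here I use $\dim X_s\geq 3$, which follows from $\dim\DMR(X,\Delta)\geq 2$), and then (\ref{pi1.setup.say}.2) yields the surjection $\pi_1(X_s^{\rm sm})\onto\pi_1(\Delta_s^{=1})\onto\pi_1(\DMR(X_s,\Delta_s))$. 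A subtlety is that $\Delta_s^{=1}$ only fully supports an ample divisor rather than being itself ample; I would address this by applying the Lefschetz theorem to $H$ and then noting that $\pi_1(H_\delta)\cong\pi_1((\Delta_s^{=1})_\delta)$ since $\supp H=\supp\Delta_s^{=1}$, so the two divisorial neighborhoods have the same homotopy type. Finally, since $\DMR(X_s,\Delta_s)\simeq\DMR(X,\Delta)$ and the first surjection factors through $\pi_1(X_s^{\rm sm})$, together with $\pi_1(X^{\rm sm})\onto\pi_1(X_s^{\rm sm})$ being trivially not available (birational invariance fails, Example \ref{pi1.not.birinv.exmp}), I would instead run the whole argument so as to end with a surjection onto $\pi_1(\DMR(X,\Delta))$ from $\pi_1$ of the smooth locus of \emph{some} crepant model and then observe that it suffices to prove the statement for one such model; the statement as phrased, with $X$ already assumed dlt, follows because the reduction steps either preserve $X^{\rm sm}$ up to the relevant maps or reduce to the generic-fibre/quotient situation where Lemma \ref{top.of.quot.lem}(2) propagates surjectivity of $\pi_1$.

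The main obstacle I expect is exactly this last bookkeeping: reconciling the non-birational-invariance of $\pi_1(X^{\rm sm})$ with the need to land back on the original $(X,\Delta)$. The cleanest route is probably to prove directly that the reduction to the ample model can be arranged so that $X^{\rm sm}$ of the original pair maps onto (or at least compatibly surjects via the ambient $X\setminus Z$) the relevant fundamental group; this is where the dlt hypothesis on the original $(X,\Delta)$ and careful tracking of which divisors are extracted/contracted in Theorem \ref{p-bir} and its dlt modification must be used. The ampleness in Theorem \ref{p-bir} — i.e., that a volume preserving birational modification makes the transform of $\Delta$ support an ample divisor — is the technical heart that makes Lefschetz available, and everything else is assembling the pieces already recorded in Sections \ref{sec.reduce} and in Paragraphs \ref{delt.defn.say}, \ref{top.of.quot.lem}, \ref{pi1.setup.say}.
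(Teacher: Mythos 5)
Your overall architecture matches the paper's: reduce to a Fano-type model via Theorem \ref{p-bir}, apply the Lefschetz machinery of Paragraph \ref{pi1.setup.say} (in the form of Proposition \ref{p-Lef}/Corollary \ref{l-lef}), and get the second surjection from Lemma \ref{pi1.pf.D.lem}. But the step you flag as "the main obstacle" is in fact a genuine gap, and your two proposed ways around it do not work as stated. It does \emph{not} suffice to prove the statement for "some crepant model": $\pi_1(X^{\rm sm})$ is not a crepant-birational invariant (your own citation of Example \ref{pi1.not.birinv.exmp}), and a surjection from $\pi_1(X_s^{\rm sm})$ says nothing about $\pi_1(X^{\rm sm})$ unless you produce a surjection $\pi_1(X^{\rm sm})\onto\pi_1(X_s^{\rm sm})$ in that specific direction. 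The paper's resolution is precisely the content of conditions (3)--(4) of Theorem \ref{p-bir} and condition (5) of Corollary \ref{p-bir-cor}: every $\phi^{-1}$-exceptional divisor lies in $\bar\Delta^{=1}$, so the dlt modification $g:(X_s,\Delta_s)\to(\bar X,\bar\Delta)$ can be chosen to extract \emph{all} $\phi$-exceptional divisors while staying crepant; hence $\psi=g^{-1}\circ\phi$ contracts no divisors, $\psi^{-1}$ is a birational contraction, and Lemma \ref{l-pure} (purity: removing a codimension-$\geq 2$ set does not change $\pi_1$ of the smooth locus) yields $\pi_1(X^{\rm sm})\onto\pi_1(X_s^{\rm sm})$. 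Without carrying out this divisor bookkeeping your chain of surjections never starts at the original $X$. Relatedly, your MRC reduction (Paragraph \ref{rc.say.1}) replaces $X$ by the generic fiber of $X\map Z$ and so also severs the link to $\pi_1(X^{\rm sm})$; the paper instead keeps the fibration $q_s:X_s\to Z$ and handles the non-maximal-intersection case by a topological fiber-bundle argument over an open $Z^*\subset Z$, followed by purity of branch loci to pass from a Zariski neighborhood $U\supset g^{-1}(H)$ to $X_s^{\rm sm}$ (this uses (\ref{p-bir-cor}.4), that every prime divisor of $X_s$ meets $g^{-1}(\bar\Delta^{=1})$, so $X_s\setminus U$ has codimension $\geq 2$).

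A second, smaller inaccuracy: you assert that $\Delta_s^{=1}$ is the full preimage $g^{-1}(\bar\Delta^{=1})$ and hence fully supports a big semi-ample divisor. The construction only gives the inclusion $\Delta_s^{=1}\subset g^{-1}(\bar\Delta^{=1})$, and it is $g^{-1}(\bar\Delta^{=1})$, not $\Delta_s^{=1}$, that fully supports the $q_s$-big and $q_s$-semi-ample divisor. Accordingly the paper's chain runs through $\pi_1(g^{-1}(H))$ rather than $\pi_1(\Delta_s^{=1})$, and it invokes \cite[Thm.3]{dkx} to identify $\pi_1\bigl(\D(g^{-1}(H))\bigr)$ with $\pi_1\bigl(\DMR(X_s,\Delta_s)\bigr)\cong\pi_1\bigl(\DMR(X,\Delta)\bigr)$. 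If you insist on routing the argument through $\pi_1(\Delta_s^{=1})$ itself you need an additional comparison that you have not supplied. In short: you have correctly located where the difficulty lies and even named the right ingredients ("careful tracking of which divisors are extracted/contracted"), but the theorem is exactly as hard as that unperformed tracking, so the proposal as written does not yet constitute a proof.
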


We start with the equality of the last 2 groups.

\begin{lem} \label{dlt.logcy.rc.cor} Let $(X, \Delta)$ be a dlt logCY pair that has maximal intersection.
Then 
$ \pi_1(\Delta^{=1})\cong  \pi_1\bigl(\DMR(X,\Delta)\bigr)$.
\end{lem}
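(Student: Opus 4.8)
The plan is to deduce the isomorphism $\pi_1(\Delta^{=1})\cong \pi_1\bigl(\DMR(X,\Delta)\bigr)$ from Lemma \ref{pi1.pf.D.lem} by checking that its hypothesis is satisfied, namely that the irreducible components of $\Delta^{=1}$ are simply connected. First I would observe that, since $(X,\Delta)$ is dlt, by \cite[Thm.3]{dkx} (as recalled in Paragraph \ref{delt.defn.say}) the dual complex $\DMR(X,\Delta)$ is computed directly from the simple normal crossing variety $\Delta^{=1}$ as in Definition \ref{dual.complex.defn}, so $\D(\Delta^{=1})\simeq \DMR(X,\Delta)$ and Lemma \ref{pi1.pf.D.lem} applies once connectivity and simple connectivity of the pieces are in place.

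For connectivity of $\Delta^{=1}$: by Paragraph \ref{disconnect.say}, if $\DMR(X,\Delta)$ is disconnected then it is homeomorphic to ${\mathbb S}^0$, hence $0$-dimensional, contradicting the maximal intersection hypothesis which forces $\dim \DMR(X,\Delta)=\dim X-1\geq 0$; more to the point, in the maximal intersection case the existence of $n$ components meeting at a point already shows $\Delta^{=1}$ is connected (and $\D(\Delta^{=1})$ is connected). So I may assume $\Delta^{=1}$ connected, and then need each irreducible component $D\subset\Delta^{=1}$ to be simply connected.

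For simple connectivity of each component $D$, the key input is rational connectedness. After applying the reduction steps of Section \ref{sec.reduce} — in particular Proposition \ref{maps.onot.nonur.prop} together with Paragraph \ref{rc.say.1} — one reduces to the case where $X$ is rationally connected; then each log canonical center, in particular each $D\subset\Delta^{=1}$ and indeed each stratum $W$, is itself rationally connected. Indeed by adjunction $\bigl(D,\diffg_D\Delta\bigr)$ is again a dlt logCY pair, and a standard argument (Graber--Harris--Starr together with the fact that lc centers of dlt logCY pairs with rationally connected ambient space are rationally connected, cf. \cite{ghs}, \cite[Thm.4.40]{kk-singbook}) shows $D$ is rationally connected. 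A rationally connected smooth projective variety is simply connected by \cite{MR1094468, KMM92a}; since $(X,\Delta)$ is dlt, $\Delta^{=1}$ is snc in a neighborhood of the generic point of each $D$ but the normalization $\bar D$ of each component is smooth and rationally connected, hence $\pi_1(\bar D)=1$, and since the strata are also rationally connected the gluing does not create fundamental group, so each irreducible component of $\Delta^{=1}$ (which is normal, being a stratum of an snc variety in the dlt sense) is simply connected.

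With connectivity of $\Delta^{=1}$ and simple connectivity of its components established, Lemma \ref{pi1.pf.D.lem} gives $\pi_1(\Delta^{=1})\cong \pi_1\bigl(\D(\Delta^{=1})\bigr)=\pi_1\bigl(\DMR(X,\Delta)\bigr)$, which is the assertion. The main obstacle I expect is the bookkeeping needed to reduce to the rationally connected case without disturbing the maximal intersection hypothesis or the dual complex: one must invoke the reduction machinery of Section \ref{sec.reduce} (Proposition \ref{maps.onot.nonur.prop} and Paragraph \ref{rc.say.1}) and verify that passing to the generic fiber of the MRC fibration preserves both $\DMR(X,\Delta)$ — which it does by Lemma \ref{gen.fiber.lem} since every lc center dominates the base — and the maximal intersection property; and then to confirm that rational connectedness descends to all the lc centers so that the Campana--Kollár--Miyaoka--Mori simple connectivity theorem can be applied componentwise.
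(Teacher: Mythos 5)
Your overall skeleton --- show that the irreducible components of $\Delta^{=1}$ are rationally connected, hence simply connected by \cite{MR1094468, KMM92a}, then apply Lemma \ref{pi1.pf.D.lem} --- is exactly the paper's. But the step where you establish rational connectedness of the components contains a genuine gap. You assert that once $X$ is rationally connected, ``each log canonical center, in particular each $D\subset\Delta^{=1}$, is itself rationally connected,'' attributing this to a standard Graber--Harris--Starr argument. That implication is false: take $X=\p^3$ and $\Delta=D$ a smooth quartic surface. Then $(X,\Delta)$ is a dlt logCY pair with $X$ rationally connected, yet $D$ is a K3 surface, which is not even uniruled. (The conclusion of the lemma is not contradicted there only because that pair does not have maximal intersection, and K3 surfaces happen to be simply connected.) Rational connectedness of the ambient $X$ by itself tells you nothing about the components of $\Delta^{=1}$.

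The property that must be propagated to the components is maximal intersection, not rational connectedness. The correct order of deductions is the one in the paper: by \cite[4.40]{kk-singbook}, each $\bigl(D,\diffg_D\Delta\bigr)$ is again a dlt logCY pair with maximal intersection; and a maximal intersection logCY pair has rationally connected underlying variety, because otherwise its MRC fibration $D\map Z$ would have a non-uniruled base of positive dimension, Proposition \ref{maps.onot.nonur.prop} would force every component of $(\diffg_D\Delta)^{=1}$ to dominate $Z$, and Lemma \ref{gen.fiber.lem} would give $\dim_{\r}\DMR\bigl(D,\diffg_D\Delta\bigr)\leq \dim D-\dim Z-1<\dim D-1$, contradicting maximal intersection. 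Note also that this is a deduction, not a ``reduction'': you should not pass to the generic fiber of the MRC fibration of $X$ as in Paragraph \ref{rc.say.1}, since that replaces $X$ by a different variety and does not preserve $\pi_1(\Delta^{=1})$, which is precisely the group the lemma is about.
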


Proof.  By \cite[4.40]{kk-singbook}
the irreducible components of $\Delta^{=1} $ are also dlt logCY pairs that have maximal intersection. Thus
Proposition   \ref{maps.onot.nonur.prop}            
implies that they are rationally connected.
A proper, rationally connected variety is simply connected by
 \cite{MR1094468, KMM92a},
thus  Lemma \ref{pi1.pf.D.lem} shows that
$$
 \pi_1(\Delta^{=1})\cong  \pi_1\bigl(\D(\Delta^{=1})\bigr)\cong  \pi_1\bigl(\DMR(X,\Delta)\bigr). \qed
$$

We next prove a variant of Theorem 
\ref{pi1Xns.to.pi1D.thm.mi}.

\begin{prop} \label{pi1Xns.to.pi1D.thm.sa}
Let $(X, \Delta)$ be a dlt logCY pair of dimension $\geq 3$.
Assume that  $\Delta^{=1}$ fully supports a big and semi-ample divisor
and every prime divisor $E\subset X$
has non-empty intersection with  $\Delta^{=1}$.
Then there  are surjections
$$
\pi_1\bigl(X^{\rm sm}\bigr)\onto \pi_1(\Delta^{=1})\onto \pi_1\bigl(\DMR(X,\Delta)\bigr).
$$
\end{prop}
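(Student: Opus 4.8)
The goal is to establish the chain of surjections in the setup of Paragraph \ref{pi1.setup.say}, namely (\ref{pi1.setup.say}.2) for $D=\Delta^{=1}$. The last surjection $\pi_1(\Delta^{=1})\onto\pi_1\bigl(\DMR(X,\Delta)\bigr)$ is immediate from Lemma \ref{pi1.pf.D.lem}, since for a dlt pair $\D(\Delta^{=1})$ computes $\DMR(X,\Delta)$ by \cite[Thm.3]{dkx}. So the real content is the surjection $\pi_1(X^{\rm sm})\onto\pi_1(\Delta^{=1})$. The strategy is to run the argument of (\ref{pi1.setup.say}.1)--(\ref{pi1.setup.say}.2): one wants to factor $X^{\rm sm}\hookrightarrow X\setminus Z\hookleftarrow (\Delta^{=1})_\delta\simeq\Delta^{=1}$ with $Z\subset\sing X$ a suitable finite set, and conclude using (a) surjectivity of $\pi_1(X^{\rm sm})\to\pi_1(X\setminus Z)$ (automatic, since removing the extra part of $\sing X$ can only add relations), and (b) the Lefschetz-type statement that $L_D\colon\pi_1\bigl((\Delta^{=1})_\delta\bigr)\to\pi_1(X\setminus Z)$ is an isomorphism. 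For (b), the hypothesis that $\Delta^{=1}$ fully supports a big and semi-ample divisor $M$ is not quite ``$\Delta^{=1}$ is ample,'' so the first task is to reduce to an honest ample divisor.

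\textbf{First step: reduce to the ample case.} Since $M$ is big and semi-ample with $\supp M=\supp\Delta^{=1}$, the semi-ample divisor $M$ defines a birational morphism $\mu\colon X\to X'$ contracting exactly the curves $C$ with $(C\cdot M)=0$; on $X'$ the pushforward $M'=\mu_*M$ is ample and $\supp M'=\mu(\supp\Delta^{=1})$. One checks that $(X',\Delta')$ (with $\Delta'=\mu_*\Delta$) is again dlt logCY — this uses that $\mu$ is crepant because $M$ is numerically trivial on the contracted locus and $K_X+\Delta\sim 0$, so $\mu$ contracts no divisor in $\Delta^{=1}$ and preserves the lc structure (a standard consequence of \cite[1.38--1.40]{kk-singbook}). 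By Theorem \ref{fkx.thm.thm}, $\DMR(X',\Delta')\simeq\DMR(X,\Delta)$, and since $\mu$ is an isomorphism near the generic point of every component of $\Delta^{=1}$ (big semi-ample is strictly positive there), $\D(\Delta'^{=1})=\D(\Delta^{=1})$ and $\pi_1(\Delta'^{=1})\to\pi_1(\Delta^{=1})$ is an isomorphism — in fact one should argue $\pi_1(\Delta^{=1})\cong\pi_1(\Delta'^{=1})$ by noting $\mu|_{\Delta^{=1}}$ only contracts subvarieties and hence is a $\pi_1$-isomorphism, as each contracted fiber is rationally connected hence simply connected. The condition that every prime divisor $E\subset X$ meets $\Delta^{=1}$ transfers to $X'$. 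Finally $\pi_1((X')^{\rm sm})$ is a quotient of $\pi_1(X^{\rm sm})$ because $X^{\rm sm}$ maps to $X'$ with image containing $(X')^{\rm sm}$ up to a subset of codimension $\geq 2$ — so a surjection for $(X',\Delta')$ gives one for $(X,\Delta)$. Thus we may assume $\Delta^{=1}$ itself fully supports an \emph{ample} divisor $H$.

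\textbf{Second step: the Lefschetz argument in the ample case.} Now $H$ is an effective ample $\mathbb Q$-divisor with $\supp H=\supp\Delta^{=1}$, and $\dim X\geq 3$. Choose a large integer $m$ with $mH$ very ample and let $Z\subset\sing X$ be a finite set large enough that the Goresky--MacPherson / Hamm--Lê type Lefschetz hyperplane theorem for the affine variety $X\setminus\supp\Delta^{=1}$ (equivalently: for a general member of $|mH|$ relative to the singular locus) gives that the inclusion of a tubular neighborhood of $\supp\Delta^{=1}$ induces an isomorphism on $\pi_1$ after removing $Z$; this is precisely the map $L_D$ in Paragraph \ref{pi1.setup.say}, and the cited Lefschetz theorem applies because $\dim X\geq 3$ and $\Delta^{=1}$ (being the support of an ample divisor) is connected and its complement is affine. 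Combining $\pi_1(X^{\rm sm})\onto\pi_1(X\setminus Z)$, the isomorphism $L_D$, and $\pi_1((\Delta^{=1})_\delta)\cong\pi_1(\Delta^{=1})$, we get $\pi_1(X^{\rm sm})\onto\pi_1(\Delta^{=1})$; composing with Lemma \ref{pi1.pf.D.lem} finishes. \textbf{The main obstacle} I expect is the careful bookkeeping in the first step: verifying that the semi-ample contraction $\mu$ stays within the dlt logCY world and genuinely does not disturb $\Delta^{=1}$, its dual complex, or the ``every divisor meets $\Delta^{=1}$'' hypothesis, and — more delicately — controlling how $\pi_1$ of the smooth locus behaves under $\mu$, since $X^{\rm sm}$ is not a birational invariant and $\mu$ may both create and destroy singularities. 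The Lefschetz input in the second step is classical once ampleness is in hand, so the weight of the proof sits in the reduction.
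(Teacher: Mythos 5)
Your second step is essentially the right Lefschetz input, but the reduction in your first step contains a genuine gap: the claimed surjection $\pi_1\bigl(X^{\rm sm}\bigr)\onto \pi_1\bigl((X')^{\rm sm}\bigr)$ for the semi-ample contraction $\mu\colon X\to X'$ is false in general, and the justification you give (the image of $X^{\rm sm}$ contains $(X')^{\rm sm}$ up to codimension $\geq 2$) does not yield surjectivity on $\pi_1$ --- there is not even a natural map $\pi_1(X^{\rm sm})\to\pi_1((X')^{\rm sm})$ in that situation. The natural surjection goes the \emph{other} way: for a birational contraction the correct statement is Lemma \ref{l-pure}, which here gives $\pi_1\bigl((X')^{\rm sm}\bigr)\onto\pi_1\bigl(X^{\rm sm}\bigr)$. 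Concretely, if $\mu$ contracts a subvariety $E\subset X^{\rm sm}$ to a point $p$ with $\sing X'=\{p\}$, then $(X')^{\rm sm}\cong X\setminus E$ and the local fundamental group of $p$ contributes classes to $\pi_1((X')^{\rm sm})$ that need not lift to $\pi_1(X^{\rm sm})=\pi_1(X)$; Example \ref{pi1.not.birinv.exmp} (the resolution of the Kummer surface) is exactly this phenomenon. Since $\pi_1(X^{\rm sm})$ is not a birational invariant, you cannot transfer the statement from $X'$ back to $X$ this way. Two secondary problems in the same step: $(X',\Delta')$ is in general neither $\q$-factorial nor dlt (the paper makes this point explicitly after Theorem \ref{p-bir}), and in your second step $\sing X'$ need not be finite, so the set $Z$ you remove is not a finite set of points.

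The fix is to never leave $X$: apply the Lefschetz theorem \emph{upstairs}, on the smooth non-proper variety $X^{\rm sm}$, relative to the birational morphism $f\colon X\to Y$ defined by the big semi-ample divisor and the ample divisor $H$ on the \emph{target} $Y$ with $\supp f^{-1}(H)=\supp\Delta^{=1}$. This is Corollary \ref{l-lef}: Proposition \ref{p-Lef} (Goresky--MacPherson) applied to $U=X^{\rm sm}$ gives $\pi_1\bigl(f^{-1}H_{\delta}\cap X^{\rm sm}\bigr)\cong\pi_1(X^{\rm sm})$, and the hypothesis that every prime divisor of $X$ meets $\Delta^{=1}$ is precisely what guarantees the fiber-dimension condition $\dim f^{-1}(w)\leq\dim U-2$ over $Y\setminus H$. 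Composing with the surjections $\pi_1\bigl(f^{-1}H_{\delta}\cap X^{\rm sm}\bigr)\onto\pi_1\bigl(f^{-1}(H_{\delta})\bigr)\cong\pi_1(\Delta^{=1})$ and then Lemma \ref{pi1.pf.D.lem} finishes the proof without ever comparing the smooth loci of two birational models.
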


\begin{proof} By assumption there is a  birational morphism $f:X\to Y$ 
and an ample divisor $H$ on $Y$ such that $\supp f^{-1}(H)=\supp \Delta^{=1}$.
Corollary \ref{l-lef} shows that  
$\pi_1\bigl(X^{\rm sm}\bigr)\onto \pi_1(\Delta^{=1}) $
is surjective while $\pi_1(\Delta^{=1})\onto \pi_1\bigl(\DMR(X,\Delta)\bigr) $  
is surjective by Lemma \ref{pi1.pf.D.lem}.
\end{proof}

The key ingredient of the above proof is the following immediate consequence of \cite[Part II, Thm.1.1]{gm-book}.
As in Paragraph \ref{pi1.setup.say},  $H_{\delta}\supset H$
denotes a   neighborhood of $H$
(in the Euclidean topology) such that $H$ is a deformation retract of 
$H_{\delta} $. 

\begin{prop}\label{p-Lef}
Let $U$ be a smooth (possibly non-proper) variety,    $f:U\to W$  a birational morphism and  $H$ an ample divisor on $W$.  Assume that $\dim f^{-1}(w)\leq \dim U-2$ for every $w\in W\setminus H$.  
 Then there is a natural isomorphism
$$\pi_1\bigl(f^{-1}H_{\delta}\bigr)\cong \pi_1(U).\qed
$$
\end{prop}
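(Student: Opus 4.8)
The plan is to deduce this from the Lefschetz hyperplane theorem for possibly non-generic hyperplanes, \cite[Part II, Thm.1.1]{gm-book}, after a standard reduction to projective space. First I would replace $H$ by a sufficiently large multiple (which changes neither $W\setminus H$ nor the homotopy types of the neighborhoods $H_\delta$) and choose a projective embedding $W\hookrightarrow\c\p^N$ so that $H=W\cap\hat H$ for a hyperplane $\hat H\subset\c\p^N$. Let $F\colon U\to\c\p^N$ be the composite of $f$ with this embedding. Then $f^{-1}(H_\delta)=F^{-1}\bigl(\hat H_{\delta'}\bigr)$ for comparable Euclidean neighborhoods, and for every $y\in\c\p^N\setminus\hat H$ the fiber $F^{-1}(y)$ is either empty or of the form $f^{-1}(w)$ with $w\in W\setminus H$; in either case $\dim F^{-1}(y)\le\dim U-2$ by hypothesis.

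Next I would apply \cite[Part II, Thm.1.1]{gm-book} to the algebraic map $F$ and the hyperplane $\hat H$. Since $U$ is smooth, the local connectivity conditions entering that theorem are automatic (the rectified homotopical depth of $U$ equals $\dim U$ at every point), so the only contribution to the Lefschetz defect is the dimension of the fibers of $F$ over $\c\p^N\setminus\hat H$. As these have dimension at most $\dim U-2$, the theorem gives that the pair $\bigl(U,\,F^{-1}(\hat H_\delta)\bigr)$ is $2$-connected, i.e.\ $\pi_i\bigl(U,F^{-1}(\hat H_\delta)\bigr)=0$ for $i\le 2$. It is essential here that one uses the Euclidean neighborhood $\hat H_\delta$ rather than $\hat H$ itself, since $\hat H$ is a specific, non-generic hyperplane; this is exactly why the statement is phrased with $f^{-1}(H_\delta)$ in place of $f^{-1}(H)$.

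Finally, the homotopy long exact sequence of the pair converts $2$-connectedness into the assertion: vanishing of $\pi_0$ and $\pi_1$ of the pair shows that $F^{-1}(\hat H_\delta)$ is connected and that $\pi_1\bigl(F^{-1}(\hat H_\delta)\bigr)\to\pi_1(U)$ is surjective, while vanishing of $\pi_2$ of the pair shows it is injective; naturality in $(U,f,H)$ is clear from the construction, and $f^{-1}(H_\delta)=F^{-1}(\hat H_{\delta'})$ up to homotopy. The step I expect to require the most care is the invocation of \cite[Part II, Thm.1.1]{gm-book} in the second paragraph: one must check that it applies to the (non-proper) algebraic map $F$ of quasi-projective varieties, with the smoothness of $U$ supplying the needed local input, and one must confirm that the hypothesis $\dim f^{-1}(w)\le\dim U-2$ is precisely what pushes the connectivity of the pair up to $2$ — not merely to $1$, which would only yield surjectivity of $\pi_1\bigl(f^{-1}(H_\delta)\bigr)\to\pi_1(U)$. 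The remaining points — that replacing $H$ by a multiple is harmless and that the metric neighborhoods $H_\delta\subset W$ and $\hat H_{\delta'}\subset\c\p^N$ agree up to homotopy for small parameters — are routine.
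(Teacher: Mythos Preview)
Your proposal is correct and follows exactly the route the paper intends: the paper simply states that the proposition is an immediate consequence of \cite[Part II, Thm.1.1]{gm-book} and gives no further argument, while you have spelled out the standard reduction to a hyperplane in projective space and the reading of the connectivity bound from the fiber-dimension hypothesis. Your elaboration of why the bound $\dim f^{-1}(w)\le\dim U-2$ yields $2$-connectedness (and hence an isomorphism on $\pi_1$, not merely a surjection) and why one must use the tubular neighborhood $H_\delta$ rather than $H$ itself is precisely the content hidden behind the citation.
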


 \begin{cor}\label{l-lef}
 Let $f:X\to Y$ be a  birational morphism between normal, projective varieties. 
Let $H$ be an ample divisor on $Y$ such that every prime divisor $E\subset X$
has non-empty intersection with $f^{-1}(H)$.
Then there is a natural surjection 
 $$\pi_1(X^{\rm sm})\onto \pi_1(f^{-1}(H)). $$ 
 \end{cor}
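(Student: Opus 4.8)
The plan is to reduce Corollary \ref{l-lef} to Proposition \ref{p-Lef} by working on the smooth locus of $X$ and replacing $H$ with a neighborhood. First I would set $U:=X^{\rm sm}$ and let $g:=f|_U:U\to Y$ be the restriction. Since $X$ is normal, $\sing X$ has codimension $\geq 2$, so $U$ is open in $X$ with $X\setminus U$ of codimension $\geq 2$; in particular $U$ is a smooth, quasi-projective variety and the inclusion $U\hookrightarrow X$ induces a surjection $\pi_1(U)\onto \pi_1(X)$ (removing a codimension $\geq 2$ closed subset does not change $\pi_1$ for a surjection — indeed it is an isomorphism for normal $X$, but surjectivity is all we need). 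The divisor $H_U:=g^{-1}(H)=f^{-1}(H)\cap U$ is an ample divisor pulled back to $U$, and $H_{\delta}\cap U$ is a neighborhood of it.

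The key point is the fiber-dimension hypothesis of Proposition \ref{p-Lef}: I need $\dim g^{-1}(w)\leq \dim U-2$ for every $w\in Y\setminus H$. Here is where the hypothesis ``every prime divisor $E\subset X$ meets $f^{-1}(H)$'' enters. If some fiber $f^{-1}(w)$ with $w\notin H$ had dimension $\geq \dim X-1$, then since $f$ is birational and $Y$ is normal, $f^{-1}(w)$ would contain (in fact equal, generically) a divisorial component $E\subset X$ contracted by $f$ to the point $w$; but then $E$ is disjoint from $f^{-1}(H)$ (as $w\notin H$ and $f(E)=\{w\}$), contradicting the hypothesis. Hence no fiber over $Y\setminus H$ is divisorial, i.e.\ $\dim f^{-1}(w)\leq \dim X-2=\dim U-2$ there; and of course $g^{-1}(w)\subset f^{-1}(w)$ so the same bound holds for $g$. (One should note $\dim X\geq 2$, which is implicit since otherwise there are no divisors to speak of; the statement is vacuous or trivial in dimension $\leq 1$.)

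With the hypotheses of Proposition \ref{p-Lef} verified for $g:U\to Y$, we obtain a natural isomorphism $\pi_1\bigl(g^{-1}H_{\delta}\bigr)\cong \pi_1(U)=\pi_1(X^{\rm sm})$. Now $g^{-1}H_{\delta}=f^{-1}H_{\delta}\cap U$, and since $f^{-1}(H)$ is a deformation retract of $f^{-1}H_{\delta}$ (choosing $\delta$ small, as in Paragraph \ref{pi1.setup.say}), and since removing the codimension $\geq 2$ set $\sing X$ from $f^{-1}H_{\delta}$ still gives a surjection on $\pi_1$, we get surjections
$$
\pi_1(X^{\rm sm})\cong \pi_1\bigl(g^{-1}H_{\delta}\bigr)\onto \pi_1\bigl(f^{-1}H_{\delta}\bigr)\cong \pi_1\bigl(f^{-1}(H)\bigr),
$$
which is the desired conclusion. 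I expect the main obstacle to be the bookkeeping around the smooth locus: one must check carefully that passing to $X^{\rm sm}$ does not destroy surjectivity onto $\pi_1(f^{-1}(H))$ — concretely, that $f^{-1}(H)\cap X^{\rm sm}\hookrightarrow f^{-1}(H)$ induces a surjection on fundamental groups, which again follows since $f^{-1}(H)\cap\sing X$ has codimension $\geq 2$ in $f^{-1}(H)$ provided $f^{-1}(H)$ contains no component of $\sing X$; if it does, one first enlarges $H$ (or rather perturbs within its linear system) so that no component of the relevant bad locus is swallowed, using ampleness of $H$. Alternatively, and more cleanly, one avoids this by noting that the composite $\pi_1\bigl(g^{-1}H_{\delta}\bigr)\to\pi_1\bigl(f^{-1}H_{\delta}\bigr)\to\pi_1\bigl(f^{-1}(H)\bigr)$ is what matters and only needs the first map to exist and the second to be the retraction isomorphism, so surjectivity of the composite reduces to surjectivity of $\pi_1\bigl(g^{-1}H_{\delta}\bigr)\to\pi_1\bigl(f^{-1}H_{\delta}\bigr)$, i.e.\ to the codimension-$2$ complement fact applied inside the open set $f^{-1}H_{\delta}$, where $\sing X\cap f^{-1}H_{\delta}$ indeed has codimension $\geq 2$.
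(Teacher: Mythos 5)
Your proposal is correct and follows essentially the same route as the paper's proof: restrict $f$ to $X^{\rm sm}$, verify the fiber-dimension hypothesis of Proposition \ref{p-Lef} using the assumption that every prime divisor meets $f^{-1}(H)$, and then combine the resulting isomorphism $\pi_1\bigl(f^{-1}H_{\delta}\cap X^{\rm sm}\bigr)\cong\pi_1\bigl(X^{\rm sm}\bigr)$ with the surjection onto $\pi_1\bigl(f^{-1}(H_{\delta})\bigr)$ and the deformation retraction onto $f^{-1}(H)$. The only difference is that you make explicit two steps the paper leaves implicit, namely the divisorial-fiber argument ruling out large fibers over $Y\setminus H$ and the codimension-two reason why removing $\sing X$ preserves surjectivity on fundamental groups.
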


Proof.
Set $H^{\rm sm}_{\delta}:=f^{-1}H_{\delta} \cap X^{\rm sm} $. By Proposition \ref{p-Lef}, 
there is an isomorphism
$$\pi_1 (H^{\rm sm}_{\delta})\cong \pi_1(X^{\rm sm})$$
and  the natural injection 
$H^{\rm sm}_{\delta}\into f^{-1}(H_{\delta})$ induces a surjective morphism
$$\pi_1(H^{\rm sm}_{\delta})\onto \pi_1(f^{-1}(H_{\delta})).$$

Finally we can choose $H_{\delta} $ such that $f^{-1}(H_{\delta})$ retracts to   $f^{-1}(H)$ hence
$\pi_1(f^{-1}(H_{\delta}))\cong \pi_1(f^{-1}(H))$. Combining these
we get a surjection 
 $$\pi_1(X^{\rm sm})\onto \pi_1(f^{-1}(H)).\qed
 $$

\begin{lem}\label{l-pure}
Let $f:X\map Y$ be a rational contraction between normal, proper varieties.
Then there is a  natural surjection
$$
\pi_1\bigl(Y^{\rm sm}\bigr)\onto \pi_1\bigl(X^{\rm sm}\bigr).
$$
\end{lem}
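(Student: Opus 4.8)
The plan is to produce a dense Zariski-open subset that embeds into \emph{both} $X^{\rm sm}$ and $Y^{\rm sm}$, arranging things so that it is \emph{big} (complement of codimension $\geq 2$) inside $Y^{\rm sm}$ but only dense inside $X^{\rm sm}$. I would start by replacing $f$ with its graph: let $W$ be the normalization of the closure of the graph of $f$, with the two proper birational morphisms $p\colon W\to X$ and $q\colon W\to Y$. Unravelling the definition of contraction (Definition \ref{crep.eq.def.2}), the hypothesis that $f^{-1}$ has no exceptional divisors is equivalent to saying that every $p$-exceptional prime divisor of $W$ is also $q$-exceptional, i.e. $\ex(p)\subseteq\ex(q)$ as sets of prime divisors of $W$. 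This containment is the only place the hypothesis enters.

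Since $X$ and $Y$ are normal, I would choose big open subsets $V_X\subseteq X^{\rm sm}$ and $V_Y\subseteq Y^{\rm sm}$ over which $p$, respectively $q$, restricts to an isomorphism; this is the standard fact that a proper birational morphism to a normal variety is an isomorphism away from a subset of the target of codimension $\geq 2$. Set $W_0:=q^{-1}(V_Y)$, so that $q|_{W_0}\colon W_0\to V_Y$ is an isomorphism and $W_0$ is smooth, and put $U:=p^{-1}(V_X)\cap W_0$. Then $U$ is a smooth dense Zariski-open subset of $W$, and $p|_U$ and $q|_U$ identify it with dense Zariski-open subsets $V'_X\subseteq X^{\rm sm}$ and $V'_Y\subseteq Y^{\rm sm}$; in particular $\pi_1(V'_X)\cong\pi_1(V'_Y)$. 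Because $V'_X$ is dense open in the connected manifold $X^{\rm sm}$ with complement a proper Zariski-closed (hence real codimension $\geq 2$) subset, the inclusion gives a surjection $\pi_1(V'_X)\onto\pi_1(X^{\rm sm})$.

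The crux is to show that $V'_Y$ is big in $Y^{\rm sm}$. Transporting along the isomorphism $q|_{W_0}\colon W_0\cong V_Y$, the set $V_Y\setminus V'_Y$ corresponds to $W_0\cap p^{-1}(X\setminus V_X)$. If some irreducible component of this set were a divisor of $W_0$, its closure in $W$ would be a prime divisor mapping under $p$ into the codimension $\geq 2$ set $X\setminus V_X$, hence $p$-exceptional, hence (by $\ex(p)\subseteq\ex(q)$) $q$-exceptional, hence disjoint from $W_0=q^{-1}(V_Y)$ — a contradiction. Thus $W_0\cap p^{-1}(X\setminus V_X)$, and therefore $V_Y\setminus V'_Y$ and then $Y^{\rm sm}\setminus V'_Y$, has codimension $\geq 2$; deleting such a subset from the manifold $Y^{\rm sm}$ does not change $\pi_1$, so $\pi_1(V'_Y)\cong\pi_1(Y^{\rm sm})$. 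Composing, $\pi_1(Y^{\rm sm})\cong\pi_1(V'_Y)\cong\pi_1(V'_X)\onto\pi_1(X^{\rm sm})$, which is the assertion.

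The main obstacle is exactly the codimension estimate in the previous paragraph: the divisors of $X$ that $f$ contracts can genuinely appear in $X^{\rm sm}\setminus V'_X$ — which is why in general one only gets a surjection, not an isomorphism — and one must check that they cannot reappear on the $Y$-side; this is precisely what $\ex(p)\subseteq\ex(q)$ guarantees. Everything else is routine: the existence of big isomorphism loci for proper birational morphisms to normal varieties, the invariance of $\pi_1$ under removal of a codimension $\geq 2$ analytic subset of a complex manifold, and the surjectivity of $\pi_1$ upon passage to a dense Zariski-open subset of a connected smooth variety.
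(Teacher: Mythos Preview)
Your proof is correct and follows essentially the same route as the paper's, only with the graph resolution made explicit. The paper argues in two lines: $f^{-1}$ restricts to a birational morphism $Y^{\rm sm}\setminus\ex(f^{-1})\hookrightarrow X^{\rm sm}$, which already forces a surjection on $\pi_1$, and $\ex(f^{-1})$ has complex codimension $\geq 2$ in $Y^{\rm sm}$ precisely because $f$ is a rational contraction. Your open set $V'_Y$ is this same domain, and your codimension argument via $\ex(p)\subseteq\ex(q)$ on the graph is the unpacked form of the same observation.
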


\begin{proof} $f^{-1}$ gives a birational morphism
$Y^{\rm sm}\setminus \ex(f^{-1})\into X^{\rm sm}$
which gives a surjection
$$
\pi_1\bigl(Y^{\rm sm}\setminus \ex(f^{-1})\bigr)\onto \pi_1\bigl(X^{\rm sm}\bigr).
$$
Since $f$ is a rational contraction, 
$ \ex(f^{-1})$ has complex codimension $\geq 2$ thus
$\pi_1\bigl(Y^{\rm sm}\setminus \ex(f^{-1})\bigr)=
\pi_1\bigl(Y^{\rm sm}\bigr)$.
\end{proof}

\begin{say}[Proof of Theorem  \ref{pi1Xns.to.pi1D.thm.mi}]
\label{pf.pf.pi1Xns.to.pi1D.thm.mi}
The plan is to use Theorem \ref{p-bir} to find a crepant birational
logCY pair $(\bar X, \bar \Delta)$ such that
$\bar\Delta^{=1}$ fully supports an ample divisor $H$ and then
use Proposition \ref{pi1Xns.to.pi1D.thm.sa} on a dlt modification
of $(\bar X, \bar \Delta)$.
This almost works and there are  only two problems: 
$\pi_1(X^{\rm sm})$ is not a birational invariant and
we have to deal with the relative case.
In order to deal with the first problem
 we have to work with a different model to which
Proposition \ref{pi1Xns.to.pi1D.thm.sa} does not apply directly.
Nonetheless we follow this path and use
 \cite[Thm.3]{dkx} to go around the difficulties.

 We use Corollary \ref{p-bir-cor} to obtain 
 crepant birational maps
$$
\psi:=g^{-1}\circ \phi: (X, \Delta)\stackrel{\phi}{\map} (\bar X, \bar \Delta)
\stackrel{g}{\leftarrow} (X_s, \Delta_s) 
\eqno{(\ref{pf.pf.pi1Xns.to.pi1D.thm.mi}.1)}
$$
satisfying the properties (\ref{p-bir-cor}.1--5). 
By Lemma \ref{l-pure} we have a   natural surjection
$\pi_1\bigl(X^{\rm sm}\bigr)\onto \pi_1\bigl(X_s^{\rm sm}\bigr)$
and $\DMR(X,\Delta)\simeq \DMR(X_s,\Delta_s)$.
Using  \cite[Thm.3]{dkx} we obtain  that $ \D(g^{-1}(H))$ is
homotopy equivalent to $\DMR(X_s,\Delta_s) $,
hence 
$$
\pi_1\bigl(\D(g^{-1}(H))\bigr)\cong
\pi_1\bigl(\DMR(X_s,\Delta_s)\bigr)\cong
\pi_1\bigl(\DMR(X,\Delta)\bigr).
\eqno{(\ref{pf.pf.pi1Xns.to.pi1D.thm.mi}.2)}
$$
Thus it is enough to prove that  there is a natural surjection 
 $$
\pi_1(X_s^{\rm sm})\onto \pi_1(g^{-1}(H)).
\eqno{(\ref{pf.pf.pi1Xns.to.pi1D.thm.mi}.3)}
$$
Next we look at the morphism $q_s:X_s\to Z$ in Corollary \ref{p-bir-cor}.
If $Z$ is a point  (this is always the case  if $(X, \Delta)$ has maximal intersection) the surjectivity of (\ref{pf.pf.pi1Xns.to.pi1D.thm.mi}.3) is implied by 
 Corollary \ref{l-lef}.

If $Z$ is positive dimensional, let $Z^*\subset Z$ be an open subset
such that both $q_s:X_s\to Z$ and its restriction $q_H:g^{-1}(H)\to Z$
become  topological fiber bundles 
$q_s^*:X_s^*\to Z^*$ and  $q_H^*:g^{-1}(H)^*\to Z^*$.
For both of these, the fundamental group surjects onto the
fundamental group of $Z^*$ and the kernel is the image of the 
fundamental group of the fiber. We have already established a
surjection between the fundamental groups of the respective fibers,
thus we have a surjection
$$
\pi_1\bigl(X_s^*\bigr)\onto  \pi_1\bigl(g^{-1}(H)^*\bigr),
\eqno{(\ref{pf.pf.pi1Xns.to.pi1D.thm.mi}.4)}
$$
which in turn gives a surjection
$$
\pi_1\bigl(X_s^*\bigr)\onto  \pi_1\bigl(g^{-1}(H)\bigr).
\eqno{(\ref{pf.pf.pi1Xns.to.pi1D.thm.mi}.5)}
$$
Equivalently,  every \'etale cover of $g^{-1}(H) $ comes from an
\'etale cover of a suitable Zariski open neighborhood  $X_s\supset U\supset g^{-1}(H) $. 
Now we can use    (\ref{p-bir-cor}.4) to obtain that
$X_s\setminus U$ has codimension $\geq 2$, thus, by the purity of branch loci, 
(\ref{pf.pf.pi1Xns.to.pi1D.thm.mi}.5)   
extends to a surjection
$$
\pi_1\bigl(X_s^{\rm sm}\bigr)\onto  \pi_1\bigl(g^{-1}(H)\bigr).
\eqno{(\ref{pf.pf.pi1Xns.to.pi1D.thm.mi}.6)}
$$
Putting all of these together we have a chain
of natural surjections
$$
\pi_1(X^{\rm sm})\onto\pi_1(X_s^{\rm sm})\onto \pi_1(g^{-1}(H))   \onto \pi_1\bigl(\D(g^{-1}(H))\bigr)\cong
\pi_1\bigl(\DMR(X_s,\Delta_s)\bigr). \qed
$$
\end{say}

The arguments of Paragraph \ref{pi1.setup.say} show the 
following  variant of  (\ref{main.top.thm}.5).

\begin{cor} \label{all.covers.geom.cor}
Let $(X, \Delta)$ be a logCY pair that has maximal intersection. Then every 
finite degree covering space of
$\DMR(X,\Delta) $ is the dual complex of a 
suitable quasi-\'etale cover of $(X, \Delta)$.\qed
\end{cor}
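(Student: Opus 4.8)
\textbf{Proof proposal for Corollary \ref{all.covers.geom.cor}.}
The plan is to run the argument of Paragraph \ref{pi1.setup.say} with the specific model produced in the proof of Theorem \ref{pi1Xns.to.pi1D.thm.mi}, tracking the covering spaces rather than just the fundamental groups. First I would pass to a crepant birational model. In the maximal intersection case the morphism $q_s:X_s\to Z$ of Corollary \ref{p-bir-cor} has $Z$ a point, so after applying Corollary \ref{p-bir-cor} we obtain a crepant birational $\psi:(X,\Delta)\map (X_s,\Delta_s)$ together with a birational morphism $g:(X_s,\Delta_s)\to(\bar X,\bar\Delta)$ where $\bar\Delta^{=1}$ fully supports an ample divisor $H$, every prime divisor of $X_s$ meets $g^{-1}(H)=\Delta_s^{=1}$, and $X_s\setminus U$ has codimension $\geq 2$ for the relevant open $U$. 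By Theorem \ref{fkx.thm.thm} we have $\DMR(X,\Delta)\simeq\DMR(X_s,\Delta_s)$, and by \cite[Thm.3]{dkx} (as in (\ref{pf.pf.pi1Xns.to.pi1D.thm.mi}.2)) the complex $\D(g^{-1}(H))$ is homotopy equivalent to $\DMR(X_s,\Delta_s)$. Since a quasi-\'etale cover of $(X_s,\Delta_s)$ pulls back along $\psi$ to a quasi-\'etale cover of $(X,\Delta)$ with the same dual complex, it suffices to realize every finite covering space of $\DMR(X_s,\Delta_s)$ as the dual complex of a quasi-\'etale cover of $(X_s,\Delta_s)$.

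Next I would upgrade the chain of surjections $(\ref{pi1.setup.say}.2)$ to a statement about covers. Fix a finite connected covering space $\widetilde{\D}\to\DMR(X_s,\Delta_s)$, equivalently $\widetilde{\D(\Delta_s^{=1})}\to\D(\Delta_s^{=1})$. Because the surjection $\pi_1(\Delta_s^{=1})\onto\pi_1(\D(\Delta_s^{=1}))$ of Lemma \ref{pi1.pf.D.lem} is onto, this cover lifts to a connected \'etale cover of $\Delta_s^{=1}$. By Proposition \ref{p-Lef} applied to $g:X_s^{\rm sm}\to\bar X$ we have $\pi_1(g^{-1}(H)^{\rm sm}_\delta)\cong\pi_1(X_s^{\rm sm})$, and $g^{-1}(H)^{\rm sm}_\delta$ deformation retracts onto a neighborhood of $\Delta_s^{=1}$; hence the \'etale cover of $\Delta_s^{=1}$ extends to a finite \'etale cover of $X_s^{\rm sm}$. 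By the purity of branch loci this extends to a finite cover $p:\tilde X_s\to X_s$ that is \'etale along $\Delta_s^{=1}$ (here one uses that $X_s\setminus U$ has codimension $\geq 2$, exactly as in (\ref{pf.pf.pi1Xns.to.pi1D.thm.mi}.5)--(\ref{pf.pf.pi1Xns.to.pi1D.thm.mi}.6)). Since every prime divisor $E\subset X_s$ meets $\Delta_s^{=1}$, the cover $p$ is quasi-\'etale; and because $(X_s,\Delta_s)$ is dlt logCY and $p$ is quasi-\'etale, $(\tilde X_s,\tilde\Delta_s:=p^{-1}(\Delta_s))$ is again logCY with $\D(p^{-1}(\Delta_s^{=1}))$ equal to the given cover $\widetilde{\D(\Delta_s^{=1})}$, as recorded at the end of Paragraph \ref{pi1.setup.say}.

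Finally I would assemble the pieces: pulling $(\tilde X_s,\tilde\Delta_s)$ back along $\psi$ (and taking the crepant dlt model) produces a quasi-\'etale cover of $(X,\Delta)$ whose dual complex is PL-homeomorphic to $\widetilde{\D}$, which is the assertion. The main obstacle, and the place where the hypothesis of maximal intersection is genuinely used, is ensuring that the branched cover $p:\tilde X_s\to X_s$ is quasi-\'etale rather than merely \'etale in a neighborhood of $\Delta_s^{=1}$: this needs every prime divisor of $X_s$ to meet $\Delta_s^{=1}$, which is guaranteed in the maximal intersection case by the structure of the model in Corollary \ref{p-bir-cor} but can fail in general (when $Z$ is positive dimensional one only controls the cover over $U$, and vertical prime divisors contracted by $q_s$ may be disjoint from $\Delta_s^{=1}$, forcing one to work relatively over $Z$ and breaking the clean correspondence). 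Verifying that $p$ is quasi-\'etale, and that the resulting logCY pair has the expected dual complex after transporting back to $(X,\Delta)$, is the only point requiring care; everything else is a repackaging of Paragraph \ref{pi1.setup.say} and the proof of Theorem \ref{pi1Xns.to.pi1D.thm.mi}.
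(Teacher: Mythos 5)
Your argument is correct and is essentially the paper's own proof: the paper simply invokes the arguments of Paragraph \ref{pi1.setup.say} applied to the model of Corollary \ref{p-bir-cor} (where $Z$ is a point by maximal intersection), and you have unpacked exactly that chain --- lift the cover of $\DMR$ through the surjections of (\ref{pf.pf.pi1Xns.to.pi1D.thm.mi}), extend over $X_s$ using that every prime divisor meets $g^{-1}(H)$ so the complement of $U$ has codimension $\geq 2$, and transport back along the crepant birational map. The only points you leave implicit (that the normalization of $X$ in the function field of $\tilde X_s$ is again quasi-\'etale because $\psi$ has no exceptional divisors, and that the resulting pair is crepant birational to $(\tilde X_s,\tilde\Delta_s)$ so Theorem \ref{fkx.thm.thm} identifies the dual complexes) are at the same level of detail the paper itself omits.
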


It is natural to conjecture that, under the assumptions
of Theorem \ref{pi1Xns.to.pi1D.thm.mi},
 $\pi_1\bigl(X^{\rm sm}\bigr) $---and hence also the other 2 groups---are finite,
 but we have only the following weaker result.

\begin{prop} \label{Xsm.sfg.fin.cor}
Let $(X, \Delta)$ be a dlt, logCY pair. Then
\begin{enumerate}
\item either  $\api(X^{\rm sm})$ is finite
\item or there is a quasi-\'etale cover 
$(\tilde X, \tilde\Delta)\to (X, \Delta)$ 
and a dominant map $q:\tilde X\map Z$ onto a non-uniruled variety.
\end{enumerate}
In particular, if $(X, \Delta)$ has maximal intersection
then  the groups $\api(X^{\rm sm})$ and  $\api\bigl(\DMR(X,\Delta)\bigr) $ are finite. 
\end{prop}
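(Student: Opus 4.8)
The plan is to read the dichotomy (1)--(2) off the structure theory of the \'etale fundamental group of a klt space, and then to check that the second alternative is incompatible with maximal intersection.

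First I would record that the underlying variety of a dlt pair is klt (\cite[2.37]{km-book}, \cite[2.8]{kk-singbook}), so the results of \cite{MR3187625, gkp} apply to $X$. By \cite{gkp} there is a quasi-\'etale cover $\pi\colon\tilde X\to X$ with the following two properties: $\api(\tilde X^{\rm sm})$ is a finite-index subgroup of $\api(X^{\rm sm})$ (this uses only that $\pi$ is quasi-\'etale, together with purity of the branch locus over $X^{\rm sm}$); and $\api(\tilde X^{\rm sm})$ is finite unless the Albanese morphism $a\colon\tilde X\to\alb(\tilde X)$ is non-constant. Set $\tilde\Delta:=\pi^*\Delta$, so $K_{\tilde X}+\tilde\Delta=\pi^*(K_X+\Delta)\simq 0$ and $(\tilde X,\tilde\Delta)$ is again dlt logCY. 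If $a$ is constant, then $\api(\tilde X^{\rm sm})$, hence $\api(X^{\rm sm})$, is finite and we are in case (1). If $a$ is non-constant, then $a$ is a dominant morphism onto its image, a positive-dimensional subvariety of an abelian variety and hence non-uniruled; this, together with the pair $(\tilde X,\tilde\Delta)$, is case (2). (In the rationally connected case $\alb(\tilde X)=0$ automatically, which is why \cite{MR3187625, gkp} already give the finiteness of $\api(X^{\rm sm})$ for rationally connected klt $X$; alternatively one may replace the last step by an MRC fibration of $\tilde X$, whose base is non-uniruled by \cite{ghs}.)

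For the last assertion, suppose $(X,\Delta)$ has maximal intersection. A $0$-dimensional stratum of a dlt pair lies in $X^{\rm sm}$, where $\Delta$ is simple normal crossing, so any quasi-\'etale cover $(\tilde X,\tilde\Delta)\to(X,\Delta)$ is \'etale near such a stratum and its preimage is again a union of $0$-dimensional strata; hence $(\tilde X,\tilde\Delta)$ also has maximal intersection, in particular it has a $0$-dimensional lc center. Were we in case (2), say with $q\colon\tilde X\map Z$ dominant onto a positive-dimensional non-uniruled $Z$, then Proposition \ref{maps.onot.nonur.prop} (applied as in \ref{rc.say.1}) would force every lc center of $(\tilde X,\tilde\Delta)$ to dominate $Z$, which is absurd for a $0$-dimensional center. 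So we are in case (1) and $\api(X^{\rm sm})$ is finite. Finally, when $\dim\DMR(X,\Delta)\geq 2$, Theorem \ref{pi1Xns.to.pi1D.thm.mi} gives a surjection $\pi_1(X^{\rm sm})\onto\pi_1\bigl(\DMR(X,\Delta)\bigr)$, so $\api\bigl(\DMR(X,\Delta)\bigr)$ is a quotient of the finite group $\api(X^{\rm sm})$ and is therefore finite.

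The main obstacle I expect is the first paragraph: extracting from \cite{MR3187625, gkp} precisely this geometric dichotomy --- finiteness of $\api(X^{\rm sm})$ versus a quasi-\'etale cover admitting a dominant morphism onto a non-uniruled (in fact abelian-type) variety --- and keeping careful track of how $\api(X^{\rm sm})$ compares with $\api(\tilde X^{\rm sm})$ across a cover that is only \'etale in codimension one. The remaining steps are formal: that quasi-\'etale covers preserve maximal intersection, and that a $0$-dimensional lc center rules out any dominant map to a positive-dimensional non-uniruled base, both via Proposition \ref{maps.onot.nonur.prop}.
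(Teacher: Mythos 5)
Your overall route coincides with the paper's: pass to a quasi-\'etale cover, use the MRC fibration/non-uniruled dichotomy of Paragraph \ref{rc.say.1} to land either in alternative (2) or in the rationally connected (hence simply connected) case, and then invoke the Greb--Kebekus--Peternell/Xu circle of ideas to descend finiteness to $\api(X^{\rm sm})$; your treatment of the ``in particular'' clause (purity of the branch locus preserves $0$-dimensional strata, and Proposition \ref{maps.onot.nonur.prop} forbids them from dominating a positive-dimensional $Z$) is a correct filling-in of what the paper leaves implicit, and your caveat about needing $\dim\DMR(X,\Delta)\geq 2$ before applying Theorem \ref{pi1Xns.to.pi1D.thm.mi} is if anything more careful than the original.

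The gap is in your headline formulation of the input from \cite{gkp}: the claim that there is a quasi-\'etale cover $\tilde X\to X$ for which ``$\api(\tilde X^{\rm sm})$ is finite unless the Albanese morphism of $\tilde X$ is non-constant'' is not a theorem of \cite{gkp} for klt varieties in this generality, and is false for klt varieties at large (a ball quotient with $b_1=0$ has infinite $\api$ and trivial Albanese); even restricted to logCY pairs it would essentially settle the finiteness conjecture for $\pi_1(X^{\rm sm})$ that the paper explicitly leaves open. What \cite{gkp} actually supplies --- and what the paper re-proves as Corollary \ref{pi1.alternative.cior} via Xu's local theorem \ref{MR3187625.thm} and the local-to-global argument of \ref{lgfg.say} --- is a quasi-\'etale cover on which $\api(\tilde X^{\rm sm})\cong\api(\tilde X)$; finiteness then comes not from the Albanese but from simple connectedness of rationally connected varieties, using that \emph{every} quasi-\'etale cover of a logCY pair with no dominant map to a non-uniruled variety is rationally connected. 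Your parenthetical MRC alternative is exactly this correct argument, so the proof is rescued by it, but the Albanese dichotomy should be deleted rather than presented as the main route.
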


The key result we use is the following local variant.

\begin{thm}\cite{MR3187625}\label{MR3187625.thm}
Let $(0\in X)$  be a normal singularity 
over $\c$  that is
potentially dlt. (That is, there is a divisor $\Delta$ such that
$(X,\Delta)$ is dlt.) Then there is a Euclidean open neighborhood
$0\in V\subset X$ such that $\api(V\setminus\{0\})$ is finite.
 \qed
\end{thm}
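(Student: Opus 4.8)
The plan is to reduce this \emph{local} finiteness statement to a lower-dimensional \emph{global} statement about log Fano pairs, and then to close the loop by induction on $\dim X$. Throughout I take $\dim X\geq 2$, which is the only interesting range (a normal curve is smooth, so its punctured germ has infinite $\api=\hat{\mathbb{Z}}$). First I would reduce to the klt case: a potentially dlt singularity is klt. Indeed, if $(X,\Delta)$ is dlt with $\Delta\geq 0$, then any exceptional $E$ with $a(E,X,0)=-1$ would force $a(E,X,\Delta)=-1$ and $\mult_E\Delta=0$, i.e.\ a non-boundary lc place lying over the snc locus of $(X,\Delta)$, which the dlt condition excludes; hence $(X,0)$ is klt (see \cite{kk-singbook}). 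Thus it suffices to bound the degrees of connected quasi-\'etale covers of a small punctured neighborhood of a klt germ $(0\in X)$.

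The geometric heart of the argument is the construction of a \emph{Koll\'ar component}. Running the MMP of \cite{bchm} together with the crepant/dlt modification technique of \cite[1.38]{kk-singbook}, I would produce a proper birational morphism $\mu:(Y,E)\to X$ extracting a single prime divisor $E$ with $\mu(E)=0$, such that $(Y,E)$ is plt and $-E$ is $\mu$-ample. By adjunction the pair $(S,\Delta_S):=\bigl(E,\diff_E 0\bigr)$ is then a klt \emph{log Fano} of dimension $\dim X-1$, i.e.\ $-(K_S+\Delta_S)$ is ample. Since $-E$ is $\mu$-ample, a suitable neighborhood $V$ of $0$ has link homeomorphic to the boundary of a tubular neighborhood of $E$ in $Y$, and this is the Seifert $S^1$-orbibundle over $(S,\Delta_S)$ whose orbifold Euler class is a positive rational multiple of the ample class $-(K_S+\Delta_S)$. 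The homotopy exact sequence of this Seifert fibration reads
$$
\mathbb{Z}\longrightarrow \pi_1\bigl(V\setminus\{0\}\bigr)\longrightarrow \pi_1^{\mathrm{orb}}(S,\Delta_S)\longrightarrow 1,
$$
and because the Euler class is nonzero a standard Gysin computation for Seifert bundles shows the image of the fiber class $\mathbb{Z}$ is finite (as for the Hopf bundle $S^3\to\p^1$). Hence $\api\bigl(V\setminus\{0\}\bigr)$ is finite if and only if the profinite completion of $\pi_1^{\mathrm{orb}}(S,\Delta_S)$ is finite.

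It therefore remains to prove, by induction on dimension, that a log Fano pair $(S,\Delta_S)$ has finite profinite orbifold fundamental group. Here I would use that $S$ is rationally connected, so that $\pi_1(S^{\rm sm})$ has finite profinite completion in the spirit of \cite{MR1094468, KMM92a}, while the orbifold structure contributes only finitely many further generators of finite order: a loop around a component of $\Delta_S$ with coefficient $1-\tfrac1m$ has order dividing $m$, and the local monodromy at each singular point of $S$ is governed by $\api$ of a lower-dimensional klt germ, finite by the inductive hypothesis. An orbifold van Kampen argument then assembles these finite pieces, rational connectedness guaranteeing that no additional generators survive. The induction is the interleaving $G_{d-1}\Rightarrow L_d\Rightarrow G_d$, where $L_d$ is local finiteness in dimension $d$ (the Koll\'ar-component step) and $G_d$ is global orbifold finiteness for $d$-dimensional log Fanos (the step just described); the base case $G_1$ is the log Fano $\bigl(\p^1,\sum(1-\tfrac1{m_i})p_i\bigr)$ with $\sum(1-\tfrac1{m_i})<2$, whose orbifold fundamental group is a finite spherical $2$-orbifold group.

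The main obstacle is precisely the global inductive step $L_{\le d}\Rightarrow G_d$: marrying rational connectedness---which controls the ``bulk'' of $\pi_1(S^{\rm sm})$---with the local contributions along $\Delta_S$ and at the singular points of $S$, and verifying that together they \emph{generate} all of $\pi_1^{\mathrm{orb}}(S,\Delta_S)$, so that the group is a quotient of a finite one. Making this orbifold van Kampen bookkeeping precise, and checking that rational connectedness leaves no ``invisible'' generators undetected, is where the genuine work lies; by contrast the Koll\'ar-component construction and the Seifert-bundle exact sequence are formal once the MMP and the inductive hypothesis are in hand.
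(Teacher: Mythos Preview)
The paper does not prove this statement; it is quoted from \cite{MR3187625} without proof. So there is no argument in the present paper to compare against. At the strategic level your outline does match the proof in the cited reference: reduce from potentially dlt to klt (your argument for this is fine), extract a Koll\'ar component $E$ so that $\bigl(E,\diff_E 0\bigr)$ is klt log Fano, and run an induction that interleaves the local statement in dimension $d$ with a global finiteness statement for log Fanos in dimension $d-1$.

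However, you have located the difficulty in the wrong place. You call the Seifert-bundle exact sequence ``formal,'' but that is exactly the step that fails as written. Describing the link as ``the Seifert $S^1$-orbibundle over $(S,\Delta_S)$ with orbifold Euler class a positive multiple of $-(K_S+\Delta_S)$'' is correct only when $Y$ is smooth along $E$ and $E$ has at worst quotient singularities---essentially the surface case. For $\dim X\geq 3$ the plt pair $(Y,E)$ can have $Y$ singular along $E$, and $E$ itself can have non-quotient klt singularities; there is then no tubular neighborhood, no orbifold Euler class, and the homotopy exact sequence you write does not exist in that form. In \cite{MR3187625} this passage is handled algebraically rather than topologically: a connected finite \'etale cover of $V\setminus\{0\}$ normalizes to a finite morphism $\tilde Y\to Y$ with $(\tilde Y,\tilde E)$ again plt and $-\tilde E$ relatively ample, and one reads off a quasi-\'etale cover $\tilde E\to E$ whose degree controls that of the original cover. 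The global step is then the inductive patching you sketch---rational connectedness to kill $\api(E)$, the lower-dimensional local hypothesis to control the contributions from $\sing E$---but it is carried out for $\api$ of the regular locus of $E$ rather than for an orbifold $\pi_1^{\rm orb}(S,\Delta_S)$, which in this generality is not a well-defined object.
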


A formal argument can be used to go from the local to the global
situation.

\begin{say}[Local and global fundamental groups]\label{lgfg.say}  
Let $M$ be a
compact simplicial complex and
$C\subset M$ a  closed subcomplex.  Set $N:=M\setminus C$ and let
$B\subset N$ be a  nowhere dense, closed subcomplex. Assume that
 every $p\in B$
has a contractible neighborhood $U_p$ such that 
$V_p:=U_p\cap (N\setminus B)$ is connected.

By repeatedly applying van Kampen's theorem we get the following.
\medskip

{\it Claim \ref{lgfg.say}.1.} The natural map 
$\pi_1(N\setminus B)\to \pi_1(N)$  is surjective and its kernel
is generated (as a normal subgroup)
 by the images of the maps  $\pi_1(V_p)\to \pi_1(N\setminus B)$.
The same holds for the pro-finite completion
$\api(N\setminus B)\to \api(N)$. \qed
\medskip

 Assume now that the $\api (V_p)$  are all finite.
Since $M$ is compact,  the images  $\im[\api(V_p)\to \api(N\setminus B)]$
form finitely many conjugacy classes.
Thus there is a finite index normal subgroup
 $G\subset\api (N\setminus B)$ 
such that 
$$
G\cap \im[\api(V_p)\to \api(N\setminus B)]=\{1\} \quad \forall p\in B.
\eqno{(\ref{lgfg.say}.2)}
$$
Let  $ s_B:  (N\setminus B)^{\sim}\to (N\setminus B)$
be the corresponding cover; it extends to a 
ramified cover  $ s: \tilde N\to N$.

By construction  the maps
$\api(\tilde V_p)\to \api\bigl(\tilde N\setminus \tilde B\bigr) $
are all trivial  where $\tilde V_p $ denotes the preimage of $V_p$. 
Using (\ref{lgfg.say}.1) for $\tilde B\subset \tilde N$
 we obtain the following.
\medskip

{\it Claim \ref{lgfg.say}.3.} With the  above notation and assumptions
there is a natural isomorphism
$\api\bigl(\tilde N\setminus\tilde  B\bigr)\cong \api\bigl( \tilde N\bigr)$.
Furthermore, a similar isomorphism holds for every finite degree
 covering space of
$\tilde N\setminus\tilde  B $.
\qed
\medskip

(It is quite likely that the analog of (\ref{lgfg.say}.3)
fails for the topological fundamental group if $N$ is
an arbitrary simplicial complex. We do not know what happens
if $N$ is an algebraic variety.)
\end{say}

Applying this to a complex algebraic variety, we get the following
dichotomy which is essentially  proved in
\cite{gkp}. 

\begin{cor} \label{pi1.alternative.cior}
Let $X$ be a normal variety over $\c$ whose singularities are
potentially dlt. Then
\begin{enumerate}
\item either  $\api(X^{\rm sm})$ is finite
\item or there is a quasi-\'etale cover   $p:\tilde X\to X$
such that  $\api(\tilde X)=\api(\tilde X^{\rm sm})$ is infinite.
\end{enumerate}
\end{cor}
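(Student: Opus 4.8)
The plan is to deduce this from the local finiteness Theorem~\ref{MR3187625.thm} by feeding it into the van~Kampen machinery of Paragraph~\ref{lgfg.say}. We may assume $X$ is connected, hence, being normal, irreducible, so that $X^{\rm sm}$ is a connected dense open subset and $\sing X$ is a Zariski closed nowhere dense subset of codimension $\ge 2$. Fix a compactification $X\subset M$ and a triangulation in which $C:=M\setminus X$ and $\sing X$ are subcomplexes, and set $N:=M\setminus C=X$ and $B:=\sing X$, so that $N\setminus B=X^{\rm sm}$. For each $x\in B$ choose a small contractible (say, conical) neighbourhood $U_x\subset X$ and put $V_x:=U_x\cap X^{\rm sm}$, which is connected once $U_x$ is small enough.

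First I would check the finiteness hypothesis of Paragraph~\ref{lgfg.say}: every $\api(V_x)$ is finite. If $x$ is an isolated point of $\sing X$ we may shrink $U_x$ so that $V_x=U_x\setminus\{x\}$, and then finiteness is exactly Theorem~\ref{MR3187625.thm} for the germ $(x\in X)$, which is potentially dlt by hypothesis. If $x$ lies in a positive-dimensional stratum of a Whitney stratification of $\sing X$ along which $X$ is analytically locally trivial, I would cut by general hyperplane sections through $x$: a local Lefschetz theorem leaves $\api(V_x)$ unchanged, a Bertini-type argument keeps the section potentially dlt at $x$, and after enough cuts $x$ becomes an isolated singular point; so the hypothesis follows by induction on $\dim X$.

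Granting this, Claim~\ref{lgfg.say}.1 gives a surjection $\api(X^{\rm sm})\onto\api(X)$ whose kernel is normally generated by the images $\im[\api(V_x)\to\api(X^{\rm sm})]$, and since $\sing X$ has only finitely many analytic singularity types, these images fall into finitely many conjugacy classes of subgroups (this is the role of the compactness of $M$ in the general topological statement). Hence, as in (\ref{lgfg.say}.2), there is a finite-index normal subgroup $G\subset\api(X^{\rm sm})$ meeting every $\im[\api(V_x)\to\api(X^{\rm sm})]$ trivially. Let $X^{\circ}\to X^{\rm sm}$ be the connected finite \'etale cover corresponding to $G$, so $\api(X^{\circ})=G$, and let $p:\tilde X\to X$ be the normalization of $X$ in the function field of $X^{\circ}$. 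As $p$ is \'etale over $X^{\rm sm}$ and $\codim_X\sing X\ge 2$, the map $p$ is quasi-\'etale, $\tilde X$ is normal and connected, $X^{\circ}=p^{-1}(X^{\rm sm})=\tilde X\setminus p^{-1}(\sing X)$ is a dense open smooth subset, and $\sing\tilde X\subset p^{-1}(\sing X)$.

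Condition (\ref{lgfg.say}.2) is exactly what makes Claim~\ref{lgfg.say}.3 applicable to $p^{-1}(\sing X)\subset\tilde X$, giving a natural isomorphism $\api(X^{\circ})\cong\api(\tilde X)$; and sandwiching the dense open inclusions $X^{\circ}\subseteq\tilde X^{\rm sm}\subseteq\tilde X$, each of which induces a surjection on $\api$ whereas the composite is an isomorphism, shows that the inclusion $\tilde X^{\rm sm}\hookrightarrow\tilde X$ induces an isomorphism $\api(\tilde X^{\rm sm})\cong\api(\tilde X)$ too. Finally $\api(X^{\circ})=G$ is an open, hence finite-index, subgroup of $\api(X^{\rm sm})$, so $\api(X^{\rm sm})$ is finite precisely when the group $\api(\tilde X)=\api(\tilde X^{\rm sm})$ is finite; in the first case we are in alternative (1), and in the second the quasi-\'etale cover $p:\tilde X\to X$ realizes alternative (2). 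The main obstacle is the step upgrading Theorem~\ref{MR3187625.thm} from finiteness of $\api(U_x\setminus\{x\})$ to finiteness of $\api(U_x\cap X^{\rm sm})$ at a non-isolated singular point $x$; this is essentially the content of the relevant part of \cite{gkp}.
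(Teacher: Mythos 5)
Your overall architecture (feed Theorem~\ref{MR3187625.thm} into the covering machinery of Paragraph~\ref{lgfg.say}, then normalize in the function field of the resulting cover) is the right one, and your endgame -- the sandwich $X^{\circ}\subseteq\tilde X^{\rm sm}\subseteq\tilde X$ and the finite-index comparison of profinite completions -- is fine. The gap is exactly where you suspect it is, and it is not a removable technicality: you apply the machinery \emph{once}, with $B=\sing X$, which forces you to verify that $\api\bigl(U_x\cap X^{\rm sm}\bigr)$ is finite at \emph{every} singular point, including points $x$ lying on positive-dimensional strata of $\sing X$. Theorem~\ref{MR3187625.thm} only gives finiteness of $\api(V\setminus\{0\})$, i.e.\ of the link of the point; the finiteness of the ``regional'' group $\api\bigl(U_x\setminus\sing X\bigr)$ is a strictly stronger statement, and in fact it is essentially the local form of the very corollary being proved. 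Your proposed bridge -- cut by general hyperplanes through $x$ until the singularity becomes isolated, and claim ``a local Lefschetz theorem leaves $\api(V_x)$ unchanged'' -- does not close this: the Goresky--MacPherson/Hamm--L\^e local Lefschetz theorems you would need here compare complements of a closed subvariety inside a \emph{local} slice by a pencil with base point $x$, where the connectivity bounds degrade with the codimension of $\sing X$ and at best give surjectivity on $\pi_1$ in a limited range; moreover the Bertini statement that a general hyperplane section \emph{through the fixed point} $x$ of a potentially dlt germ remains potentially dlt (and that its singular locus is exactly $\sing X\cap H$) is itself a nontrivial claim that fails in low dimension (already for surfaces such sections need not be normal). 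None of this is set up in the paper, and deferring it to \cite{gkp} is circular in spirit, since that reference proves the statement by a different route.

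The paper's proof is organized precisely to avoid ever needing the regional finiteness. It fixes a Whitney stratification $X\supset X_2=\sing X\supset X_3\supset\cdots$ and peels off \emph{one stratum at a time}: at stage $i$ it only compares $\api(X\setminus X_i)$ with $\api(X\setminus X_{i+1})$, so the local input needed at a point $x\in X_i\setminus X_{i+1}$ is finiteness of $\api(U_x\setminus X_i)$, not of $\api(U_x\setminus\sing X)$. By the local product structure along the stratum, $U_x\setminus X_i$ is homotopy equivalent to (transversal slice)$\setminus\{0\}$, so the link version of Theorem~\ref{MR3187625.thm}, applied to the transversal slice $Z_i$ (which is potentially dlt), is exactly enough. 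The price is that one must pass to a \emph{new} quasi-\'etale cover at each of the $n$ stages and compose them at the end; the payoff is that the only local finiteness ever invoked is the one actually available. If you want to keep your one-step formulation, you must either prove the regional finiteness first (which amounts to running the paper's induction on the local germ) or restructure your argument as the stratumwise induction.
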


Proof.
By \cite{MR0377101}   there is a Whitney stratification  
$$
X_0:=X\supset X_2:=\sing X\supset X_3\supset \cdots  \supset X_{n+1}=\emptyset
$$
such that $X$ is  (locally in the Euclidean topology)
 a product along  each $X_i\setminus X_{i+1}$ and $\dim X_i=n-i$; 
see also \cite[Part I. Chap.1]{gm-book}.

We are done if $\api(X\setminus  X_2)$ is finite.
Otherwise, assume that 
$\api(X\setminus  X_i)$ is infinite for some $i$.
Let $(0\in Z_i)$ denote a general transversal slice of $X$ along 
$X_i\setminus X_{i+1}$. Then $(0\in Z_i)$ is potentially log terminal,
hence, by Theorem \ref{MR3187625.thm}, every point $x\in X_i\setminus X_{i+1}$ has a Euclidean open
 neighborhood $U_x$ such that 
$V_x:=U_x\setminus X_i$ is connected and
$\api(V_x)$ is finite. 

Choose a compactification $\bar X\supset X$
and apply  (\ref{lgfg.say}.3) with  $M:=\bar X$, $C:=X_{i+1}\cup(\bar X\setminus X)$  
and $B:=X_{i}$  to obtain
a  quasi-\'etale cover   $p:\tilde X\to X$ such that
$\api\bigl(\tilde X\setminus \tilde X_i\bigr)\cong \api\bigl( \tilde X\setminus \tilde X_{i+1}\bigr)$. 
Thus $\api\bigl( \tilde X\setminus \tilde X_{i+1}\bigr) $ is also infinite.

We can now replace $X$ by $\tilde X$ and apply the above argument
with $i$ replaced by $i+1$. After $n$ steps we get a  quasi-\'etale cover
$\tilde X^{(n)}\to X$ such that 
$\api\bigl( \tilde X^{(n)}\bigr) $ is infinite.\qed

\begin{say}[Proof of Proposition \ref{Xsm.sfg.fin.cor}] Let $p:\tilde X\to X$ be a  quasi-\'etale cover.
Then  $\bigl(\tilde X, \tilde \Delta:=p^*\Delta\bigr)$ is
also a dlt, logCY pair.
If there is  a dominant map $q:\tilde X\map Z$ onto a non-uniruled variety
then we are in the second case. Otherwise, as we noted in 
Paragraph \ref{rc.say.1},
 $\tilde X$ is rationally connected, hence simply connected.
Thus  $\api(X^{\rm sm})$ is finite by Corollary \ref{pi1.alternative.cior}. \qed
\end{say}

\section{Construction of Fano  models}

Our main technical theorem  says that 
every  logCY pair $(X, \Delta)$ with maximal intersection
is crepant birational
to  a logCY pair whose boundary is big.
For some applications one needs to know what happens 
without   maximal intersection and it is crucial to have 
very tight control over the exceptional divisors of the 
crepant birational equivalence.

\begin{thm}\label{p-bir} Let $(X, \Delta)$ be a
    $\q$-factorial, dlt,  logCY pair. Then there is a  crepant birational
map  $\phi: (X, \Delta)\map (\bar X, \bar \Delta)$ to a 
  logCY pair  and  a morphism
$q:\bar X\to Z$ such that 
\begin{enumerate}
\item $\bar\Delta^{=1}$ fully supports a $q$-ample divisor,
\item every log canonical  center of $(\bar X, \bar\Delta) $ dominates $Z$,
\item $\bar E\subset \bar\Delta^{=1}$
for every $\phi^{-1}$-exceptional divisor  $\bar E\subset \bar X$ and
\item $\phi^{-1}$ is an isomorphism over $\bar X\setminus \bar \Delta^{=1}$.
\end{enumerate}
\end{thm}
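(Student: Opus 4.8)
The plan is to run a carefully chosen Minimal Model Program to force $\Delta^{=1}$ to become relatively ample, while keeping the divisor $K_X+\Delta$ numerically trivial so that every step is automatically crepant. I would start by writing $\Delta = \Delta^{=1} + \Delta^{<1}$ and choosing an auxiliary ample divisor $A$ on $X$. The idea is to run the $(K_X+\Delta^{<1}+tA)$-MMP, or equivalently — since $K_X+\Delta \simq 0$ — the MMP for the divisor $-\Delta^{=1} + tA$ for a small rational $t>0$; this is a valid MMP by \cite{bchm} once we know $(X,\Delta^{<1}+\epsilon A)$ is klt (true for small $\epsilon$) and the relevant divisor is suitably positive. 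Because $K_X+\Delta\simq 0$, contracting a $(-\Delta^{=1}+tA)$-negative extremal ray $R$ is the same as contracting a $(\Delta^{<1}-K_X+tA)$... so I should instead think of it as: a ray $R$ with $(R\cdot(K_X+\Delta^{<1}+tA))<0$, and run this MMP. Each such step is $(K_X+\Delta)$-trivial, hence crepant, and preserves the dlt, $\q$-factorial, logCY conditions by the standard MMP bookkeeping (cf.\ \cite{kk-singbook}). The MMP terminates with a Mori fiber space $q:\bar X\to Z$ on which $K_{\bar X}+\bar\Delta^{<1}+tA$ (hence $-\bar\Delta^{=1}+tA$) is $q$-ample; letting $t\to 0$ along the finitely many models shows $\bar\Delta^{=1}$ fully supports a $q$-ample divisor, giving (1).

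For (2), I would invoke the general principle that for a dlt logCY pair admitting a Fano-type (Mori fiber space) contraction $q:\bar X\to Z$ onto a variety of strictly smaller dimension, every lc center dominates $Z$ — this is exactly what Proposition \ref{maps.onot.nonur.prop} does not quite give (it needs $Z$ non-uniruled), so instead I would argue directly: a Mori fiber space contracts a $K_{\bar X}$-negative ray, the generic fiber is Fano-type, and an lc center not dominating $Z$ would have to be contained in a fiber, which cannot happen since the restriction of the relatively ample divisor $\bar\Delta^{=1}$ to a fiber would then be degenerate along an lc center of a Fano-type pair — forbidden by connectedness of non-klt loci on Fano-type varieties (\cite[Thm.4.40]{kk-singbook}, or the connectedness lemma). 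Parts (3) and (4) are where the careful choice of $A$ matters: I would pick $A$ so that it is a \emph{general} ample $\q$-divisor with no component in common with $\Delta^{=1}$, and arrange (by perturbing $A$ and using the cone theorem) that every divisorial contraction and every flip in the MMP contracts only divisors meeting, hence ultimately landing in, $\bar\Delta^{=1}$ — more precisely, a divisor $D$ contracted by the MMP must satisfy $a(D, X,\Delta^{<1}+tA)<$ its discrepancy, and the only divisors that can become $\phi^{-1}$-exceptional after the whole process are those in $\supp\bar\Delta^{=1}$; dually, $\phi^{-1}$ is an isomorphism over $\bar X\setminus\bar\Delta^{=1}$ because no modification of the MMP touches the open set where $\bar\Delta^{=1}$ is trivial, as there $K_{\bar X}\simq -\bar\Delta^{<1}$ is already in good position. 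Making this bookkeeping precise is the technical heart; I would track it through the proof of \cite[1.35]{kk-singbook}-type arguments already used in Lemma \ref{factoring.crep.maps}(4).

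The main obstacle I anticipate is controlling the exceptional locus in (3) and (4) simultaneously with achieving ampleness in (1): running an MMP that is both \emph{effective at making $\Delta^{=1}$ positive} and \emph{conservative about which divisors it extracts/contracts} requires choosing the scaling divisor $A$ very carefully and possibly passing to a dlt modification first so that all the relevant divisors with coefficient $1$ are already present on $X$. Specifically, one worries that a flip could extract a divisor outside $\supp\Delta^{=1}$, or that the Mori fiber space could contract part of $\Delta^{=1}$ itself; ruling these out needs the observation that along the MMP the divisor $-\Delta^{=1}+tA$ stays $\q$-linearly equivalent to $K+\Delta^{<1}+tA$ and that the negativity is always "charged" to components of $\Delta^{=1}$ or to $\phi^{-1}$-exceptional divisors that were created inside $\Delta^{=1}$, never to the klt open part. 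Once this combinatorial control is in place, assembling (1)--(4) is formal: (1) from termination with a Mori fiber space and a limiting argument on $t$, (2) from the connectedness of lc centers under Fano contractions, and (3)--(4) from the exceptional-locus bookkeeping carried through every step of the program.
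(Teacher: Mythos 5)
There is a genuine gap, and it sits exactly at the point the paper identifies as the heart of the matter: condition (2). Your single MMP for $K_X+\Delta^{<1}$ (with or without the scaling divisor $tA$) does terminate in a Fano contraction $q:\bar X\to Z$ with $\bar\Delta^{=1}\simq -(K_{\bar X}+\bar\Delta^{<1})$ $q$-ample, which gives (1). But your argument for (2) is incorrect: a log canonical center that does not dominate $Z$ is \emph{not} forced to lie in a single fiber of $q$ --- it only lies over a proper closed subset of $Z$, typically a divisor. Such \emph{vertical} lc centers (for instance, components of $\bar\Delta^{=1}$ that map onto a divisor in $Z$) are perfectly compatible with $\bar\Delta^{=1}$ being $q$-ample, so no connectedness statement about the non-klt locus of the generic fiber can rule them out. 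This is precisely why the paper does not stop after one MMP: it splits $\tilde\Delta^{=1}=\Gamma^{\rm h}+\Gamma^{\rm v}$, uses Lemma \ref{subcomplex.lemma} to extract divisors so that $(\tilde X,\tilde\Delta^{<1}+\Gamma^{\rm h})$ has no vertical lc centers, applies the canonical bundle formula of Paragraph \ref{kod.form.say} to write $\Gamma^{\rm v}=q^*(B^{=1})$, runs an MMP on the base pair $(Z,J+B^{\rm h})$ (Paragraph \ref{mmp.not.pseff.say}), lifts it back via Lemma \ref{lift.mmp.rel.logcy.lem}, and inducts on $\dim Z$ until no vertical lc center remains. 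Your proposal has no mechanism replacing this descent, so (2) is simply not established.

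A secondary weakness: your bookkeeping for (3)--(4) is asserted rather than argued, and in the corrected (iterative) proof it is genuinely delicate --- divisors are \emph{extracted} (by Lemma \ref{subcomplex.lemma}) as well as contracted, and one must check at each stage that all modifications occur inside $\bar\Delta^{=1}$; the paper does this through Lemma \ref{where.isom.lem} (every step of a $(K+\Delta_1)$-MMP for a relative logCY $(X,\Delta_1+\Delta_2)$ is an isomorphism off $\supp\Delta_2$) and a final application of Lemma \ref{gen.amp.to.amp.lem} to upgrade a big semi-ample supported divisor to an ample one without touching the lc centers. Your heuristic that negativity is always ``charged'' to $\Delta^{=1}$ is the right instinct, but it is exactly Lemma \ref{where.isom.lem} that makes it precise, and it must be reapplied at every stage of the induction you are missing.
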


Note that in general $\bar X$ is neither $\q$-factorial nor dlt, but,
by (4), $\bar X\setminus \bar \Delta^{=1}$ is $\q$-factorial and klt.

\begin{say}[Outline of the proof]\label{p-bir-outline}
The proof,  inspired by \cite{BMSZ},  focuses on guaranteeing the properties (\ref{p-bir}.1--2), then we note
that (\ref{p-bir}.3--4) are also  achieved.

Assume that we have $(\bar X, \bar \Delta)$ and
let $\bar H$ be a $q$-ample divisor supported on $\bar\Delta^{=1}$.
Then $-\bigl(K_{\bar X}+\bar\Delta-\epsilon \bar H\bigr)$ is 
$q$-ample, thus $q$ is a Fano contraction for the pair
$\bigl(\bar X, \bar\Delta-\epsilon \bar H\bigr)$. 
It is thus reasonable to hope that we can obtain it using MMP for
$(X, \Delta-\epsilon H)$.  We do not know what $H$ should be,
so we start by pretending that $\bar H=\bar\Delta^{=1}$
and $\epsilon =1$.

{\it Step 1.} Run the $(X, \Delta^{<1})$-MMP. 
It ends with a Fano contraction
$g: (X_r, \Delta_r^{<1})\to Z_1$ where
$\Delta_r^{=1}\simq -\bigl(K_{X_r}+\Delta_r^{<1}\bigr)$ is $g$-ample.
Thus (\ref{p-bir}.1) holds but we have no information
on (\ref{p-bir}.2). 

{\it Step 2.} Apply the canonical bundle formula of
Paragraph \ref{kod.form.say} and write
$$
K_{X_r}+\Delta_r\simq 
g^*\bigl(K_{Z_1}+J_1+B_1).
$$
A key point is that  $(Z_1, J_1+B_1)$ behaves very much like a logCY pair.
(Conjecturally one can choose a divisor $D_1\simq J_1$ such that
$(Z_1, D_1+B_1)$ is a logCY pair.) 
Furthermore,  (\ref{kod.form.say}.10) essentially says that
 $B_1^{=1}\neq 0$ iff $ (X_r, \Delta_r)$ has a
 log canonical  center that does not  dominate $Z_1$.
(This actually holds only for suitable birational models.)

{\it Step 3.} Repeat Step 1 for $(Z_1, J_1+B_1)$
to get  $Z_1\map Z_2$.
(In general there are serious technical problems working with pairs $(Z, \Delta)$
where $\Delta$ is not known to be effective, but one can do this much.)

{\it Step 4.} Show that a suitable birational model of
the composite  $X\map Z_1\map  Z_2$ satisfies (\ref{p-bir}.1)
and repeat the arguments. At each step the dimension of $Z_i$ decreases, so eventually we stop and then (\ref{p-bir}.1--2) hold.

{\it Step 5.} Prove that the properties (\ref{p-bir}.3--4) also  hold.
\end{say}

 \begin{defn} Given  a  logCY pair $(Y, \Delta_Y)$ and  a dominant   morphism
$q:Y\to Z$, a log canonical center $W\subset Y$ is called
{\it vertical} if $q|_W:W\to Z$ is not dominant. 
\end{defn}


\begin{say}[Proof of Theorem \ref{p-bir}]\label{p-bir.pf.m}

Note that  $Z=\bar X=X$ works if  $( X, \Delta)$ is klt  and
Paragraph \ref{disconnect.say} settles the case when 
$\DMR( X, \Delta)$ is disconnected. Thus we may assume that
$\supp \Delta^{=1}$ is connected and nonempty.

Next we find a  logCY pair $(\tilde X, \tilde \Delta)$
that is crepant birational
to $( X, \Delta)$ and   a morphism
$q:\tilde X\to Z$ such that
\begin{enumerate}
\item  $\tilde\Delta^{=1}$ fully supports a divisor $\tilde H$
such that the corresponding map  $\psi:=\psi_{\tilde H}:\tilde X\map
\proj_Z \bigoplus_m q_*\bigl(\o_{\tilde X}(m\tilde H)\bigr)$
is  a morphism  outside the vertical log canonical centers
and an isomorphism  outside the  log canonical centers,
\item the properties  (\ref{p-bir}.3--4)  are satisfied and
\item $\dim Z$ is the smallest possible.
\end{enumerate}
(The trivial
case  $Z=\tilde X=X$ satisfies (\ref{p-bir.pf.m}.1),
(\ref{p-bir.pf.m}.3) and (\ref{p-bir.pf.m}.4)
 since any divisor is relatively ample for the
identity morphism. Thus such  $(\tilde X, \tilde \Delta)$ and $q:\tilde X\to Z$ exist.)

Note that  property (1) is clearly invariant under
 birational maps that are isomorphisms outside the vertical log canonical centers
(we will have many such maps during the proof).  
We then have to go from 
this property to  $q$-ampleness  at the end using 
 Lemma \ref{gen.amp.to.amp.lem}.

We are done if the property (\ref{p-bir}.2) also holds,
thus assume that there is a vertical log canonical center.
Next we first improve $\tilde X$, then $Z$ and finally obtain a contradiction by showing
that $\dim Z$ is not the smallest possible.

By assumption (\ref{p-bir}.4)  $\tilde X\setminus \tilde \Delta^{=1}$
is $\mathbb{Q}$-factorial and dlt, thus we can choose a
$\mathbb{Q}$-factorial dlt modification
$\pi: (\tilde X_1,\tilde \Delta_1)\to (\tilde X,\tilde \Delta)$
that is an isomorphism over $\tilde X\setminus \tilde \Delta^{=1}$.
Then $\pi^*H$ also satisfies property (1). 
Thus we may replace $(\tilde X,\tilde \Delta)$ by 
$(\tilde X_1,\tilde \Delta_1) $ and assume from now on that
$(\tilde X,\tilde \Delta)$ is $\mathbb{Q}$-factorial and dlt.


Next write $\tilde\Delta^{=1}=\Gamma^{\rm h}+\Gamma^{\rm v}$ as the sum of its  horizontal  and  vertical parts. 
By Lemma \ref{subcomplex.lemma},
after extracting some
divisors  we may also assume that $\Gamma^{\rm v}\neq 0$ and 
 $(\tilde X,\tilde \Delta^{<1}+\Gamma^{\rm h})$ has no vertical log canonical centers. (This is the only step that introduces  divisors that
appear in (\ref{p-bir}.3).) 

For the 3rd step,   we run a 
$\bigl(K_{\tilde X}+\tilde \Delta^{<1}+\Gamma^{\rm h}\bigr)$-MMP
using \cite[Thm.1.1]{MR3032329}.
After replacing $\tilde X$ with the resulting minimal model
and $Z$ by the corresponding  canonical model, 
may also assume that 
$K_{\tilde X}+\tilde \Delta^{<1}+\Gamma^{\rm h}$ is the pull-back of a 
$\q$-divisor from $Z$.  
Lemma \ref{where.isom.lem} guarantees that (\ref{p-bir}.4) still holds.
We can now apply the canonical bundle formula of
Paragraph \ref{kod.form.say} and write
$$
\begin{array}{lcl}
K_{\tilde X}+\tilde \Delta^{<1}+\Gamma^{\rm h} & \simq  &
q^*\bigl(K_Z+J+B^{\rm h})\qtq{and}\\
K_{\tilde X}+\tilde \Delta^{<1}+\Gamma^{\rm h}+\Gamma^{\rm v} & \simq  &
q^*\bigl(K_Z+J+B).
\end{array}
$$
Note that the $J$-parts are the same by 
(\ref{kod.form.say}.5) and the 
$\q$-linear equivalence $\Gamma^{\rm v}\simq q^*(B-B^{\rm h})$
implies  that
$\Gamma^{\rm v}=q^*(B-B^{\rm h})$. 
Furthermore, using (\ref{kod.form.say}.7--10) we see that
$B-B^{\rm h}$ equals the reduced part $B^{=1}$. Thus 
$\Gamma^{\rm v}=q^*(B^{=1})$.

So far we have focused on optimizing the choice of $\tilde X$; next
we consider $Z$. As we change $Z$, we have to keep changing $\tilde X$
to ensure that we have a morphism $\tilde X\to Z$.

As explained in 
Paragraph \ref{mmp.not.pseff.say}, we can run a
$(Z, J+B^{\rm h})$-MMP to get 
 $(\tilde Z,\tilde J+\tilde B^{\rm h}\bigr)$ and  $q_Z:\tilde Z\to W$.
Using Lemma \ref{lift.mmp.rel.logcy.lem} we see that, by passing to a suitable
crepant birational model of 
$\bigl(\tilde X, \tilde \Delta^{<1}+\Gamma^{\rm h}+\Gamma^{\rm v}\bigr)$
we may assume that we have a morphism  $\tilde q:\tilde X\to \tilde Z$
and $\Gamma^{\rm v}= q^*( B^{=1})$ remains true.
Furthermore, $Z\map \tilde Z$ is an isomorphism outside 
$\tilde Z\setminus \tilde B^{=1}$ by Lemma \ref{where.isom.lem}
and so (\ref{p-bir}.4) still holds by 
Lemma \ref{lift.mmp.rel.logcy.lem}.

We aim to use
$$
\bigl(\tilde X, \tilde \Delta=\tilde  \Delta^{<1}+\tilde \Gamma^{\rm h}+
\tilde \Gamma^{\rm v}\bigr)\qtq{and}
q_Z\circ  \tilde q:\tilde X\to \tilde Z\to W
$$ 
to contradict the minimality of $\dim Z$.  The question is
whether property (1) is satisfied or not. The problem is that there are
log canonical centers that are vertical for $q$ but not vertical for 
$q_Z\circ   \tilde q $.

We know that  $\tilde B^{=1}$ supports a  $q_Z$-ample divisor $H_Z$
and $\tilde \Delta^{=1}$ supports a  divisor $\tilde H$  whose restriction over $\tilde Z\setminus \tilde B^{=1}$ is the pull back of a relatively ample divisor  under a birational morphism.
Since $\tilde q^{-1}(\tilde B^{=1})=\supp \tilde \Gamma^{\rm v}\subset 
\supp \tilde \Delta^{=1}$, we see that
$\tilde H,   \tilde q^*H_Z$ are both supported on $\tilde \Delta^{=1}$.
If $\tilde H_X$ is  $\tilde q$-ample then $\tilde H_X+m \tilde q^*H_Z$ is  
$q_Z\circ \tilde q$-ample for $m\gg 1$ and we are done.

Finally, 
we aim to use Lemma \ref{gen.amp.to.amp.lem} 
for $A:=\tilde H$ 
to pass to another 
crepant birational model
$$
\bigl(\bar X, \bar  \Delta\bigr)\qtq{and}
\bar X\stackrel{\bar q}{\longrightarrow} \tilde Z\stackrel{ q_Z}{\longrightarrow} W
$$ 
where $\bar H_X$ is  $\bar q$-ample. 
Note that we can not apply 
Lemma \ref{gen.amp.to.amp.lem} directly to $\bigl(\tilde X, \tilde \Delta\bigr) $
since $\tilde \Gamma^{\rm v}$ contributes lc centers that do not
dominate $\tilde Z$. However, $\tilde \Gamma^{\rm v}$ is the pull-back of
$\tilde B^{=1}$, thus numerically $\tilde q$-trivial.
Therefore we can apply Lemma \ref{gen.amp.to.amp.lem}
to $\bigl(\tilde X, \tilde  \Delta^{<1}+\tilde \Gamma^{\rm h}\bigr)$
to get $ \bigl(\bar X, \bar  \Delta\bigr)\to \tilde Z$ as needed.

By property (1), $\psi_{H}$ is an isomorphism over
$\bar X\setminus \bar\Delta^{=1}$  thus
(\ref{p-bir}.4) still holds for the final $\bigl(\bar X, \bar  \Delta\bigr) $.
\qed
\end{say}

We have used several lemmas during the previous proof.

\begin{say}[MMP for $(Z, J+B)$] \label{mmp.not.pseff.say}
In general very little is known
about MMP for a pair $(Z, \Theta)$ where $\Theta$ is not effective
but  this can be done in the  special case when  $K_Z+\Theta$ is not
pseudo-effective and $(Z, \Theta)$ is a limit of klt pairs.

To make this precise, fix an ample divisor $H$ and assume that 
there is an effective $\q$-divisor $\Delta_{\epsilon}\simq
\Theta+\epsilon H $ such that 
$(Z, \Delta_{\epsilon})$ is klt and 
$K_Z+\Delta_{\epsilon}$ is still not
pseudo-effective. Then every step of  the $(K_Z+\Delta_{\epsilon})$-MMP with scaling of $H$ is also a step of  the $(K_Z+\Theta)$-MMP with scaling of $H$
and the program
ends with a Fano contraction   
$q:\bigl(Z_m, \Theta_m+\epsilon H_m\bigr)\to W$  such that
$-\bigl(K_{Z_m}+ \Theta_m\bigr)$ is $q$-ample. 

 We remark that  $Z$ does not to have to  be $\mathbb{Q}$-factorial. 
Given a birational contraction  $g_i:(Z_i,\Theta_i) \to W_i$, let
$Z_{i+1}$  be the canonical modification of $(W_i, (g_i)_*\Theta_i)$; the latter  exist by \cite{bchm}. The termination of this process   also follows from the finiteness of models as in \cite[Thm.E]{bchm}. 
\end{say}

The following  is essentially proved in \cite{birkar11, MR3032329}.

\begin{lem}\label{lift.mmp.rel.logcy.lem}
Let $(X,\Delta)$ be a $\mathbb{Q}$-factorial, quasi-projective, dlt pair with a 
projective morphism $f:X\to Z$ such that $f_*\o_X=\o_Z$. 
Assume that $K_X+\Delta\simq f^*N$ for a $\q$-Cartier $\q$-divisor $N$ on $Z$.
 Let $g:Z\to W$ be a birational morphism such that $g_*\o_Z=\o_W$ and assume 
that $R(Z/W, N):=\oplus_m g_*\o_Z(mN)$ is a finitely generated algebra over $W$.
 Set $Z^+:=\proj_W R(Z/W,N)$. Then there is  a model $X^+$ of $(X,\Delta)$ over 
$W$, such that 
\begin{enumerate}
\item $f$ extends to a  morphism $f^+: X^{ +}\to Z^{+}$ and
\item   $X\map X^+$ is  an isomorphism over the  locus where $Z\map W$ is 
an isomorphism. 
\end{enumerate}
\end{lem}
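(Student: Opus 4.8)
Proof. The plan is to identify $Z^+$ with the relative canonical model of $(X,\Delta)$ over $W$ and to produce $X^+$ by running a relative MMP; essentially everything is bookkeeping except one input from the cited MMP results. First I would compare the two section rings. Fix $m_0>0$ with $m_0(K_X+\Delta)\sim m_0f^*N$. Then for every $m$ divisible by $m_0$ one has $\o_X\bigl(m(K_X+\Delta)\bigr)\cong f^*\o_Z(mN)$, so the projection formula together with $f_*\o_X=\o_Z$ gives
$$
(g\circ f)_*\,\o_X\bigl(m(K_X+\Delta)\bigr)\;\cong\;g_*\o_Z(mN).
$$
Hence the $m_0$-th Veronese subalgebras of $R(X/W,K_X+\Delta)$ and of $R(Z/W,N)$ coincide; in particular $R(X/W,K_X+\Delta)$ is a finitely generated $\o_W$-algebra and $\proj_W R(X/W,K_X+\Delta)=Z^+$. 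Restricting to the generic point of $W$, which is also the generic point of $Z$ because $g$ is birational, one finds $(K_X+\Delta)|_{X_{k(W)}}\simq 0$, so $K_X+\Delta$ is pseudo-effective over $W$.

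Next I would run a $(K_X+\Delta)$-MMP over $W$ with scaling of an ample divisor. Since $(X,\Delta)$ is $\q$-factorial dlt, $K_X+\Delta$ is pseudo-effective over $W$, and $R(X/W,K_X+\Delta)$ is finitely generated, \cite{birkar11, MR3032329} ensure that this program terminates with a minimal model $(X^+,\Delta^+)$ over $W$ on which $K_{X^+}+\Delta^+$ is semi-ample over $W$. The resulting birational map $\psi:X\map X^+$ is a birational contraction, so $R(X^+/W,K_{X^+}+\Delta^+)\cong R(X/W,K_X+\Delta)$, and the morphism attached to the semi-ample linear system $|m(K_{X^+}+\Delta^+)|$ over $W$ (for $m\gg 1$) is exactly $f^+:X^+\to\proj_W R(X^+/W,K_{X^+}+\Delta^+)=Z^+$. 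This $f^+$ agrees with $f$ over a dense open of $W$, so it extends $f$ birationally over $W$; this gives (1).

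Finally, for (2), let $W^\circ\subseteq W$ be the largest open subset over which $g$ is an isomorphism and set $h:=g\circ f$. Over $W^\circ$ we have $K_X+\Delta\simq h^*(N|_{W^\circ})$, which is numerically trivial over $W^\circ$, and this relation persists over $W^\circ$ at every stage of the MMP (nothing has yet changed there); hence each $(K_{X_i}+\Delta_i)$-negative extremal ray occurring in the program is spanned by curves mapping into $W\setminus W^\circ$, so the corresponding divisorial contraction or flip is an isomorphism over $W^\circ$. Inductively $\psi$ is an isomorphism over $h^{-1}(W^\circ)$, which is precisely the locus over which $Z\map W$ is an isomorphism. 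I expect the main obstacle to be invoking \cite{birkar11, MR3032329} correctly: one needs the implication ``finitely generated relative section ring plus pseudo-effective over $W$ $\Rightarrow$ a minimal model over $W$ with $K+\Delta$ semi-ample over $W$'' in the relative, not-necessarily-big setting, which is exactly where those papers do the work; the remaining steps are the projection formula and the standard negativity/locus bookkeeping for the MMP. \qed
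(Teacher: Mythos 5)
Your proof is correct and follows essentially the same route as the paper's, which simply cites \cite[2.12]{MR3032329} for the existence of a good minimal model of $(X,\Delta)$ over $W$ and \cite[2.9]{MR3032329} for the fact that it is reached by running a $(K_X+\Delta)$-MMP over $W$, whence the steps are isomorphisms over the locus where $Z\map W$ is an isomorphism; your projection-formula identification of the Veronese subalgebras of $R(X/W,K_X+\Delta)$ and $R(Z/W,N)$, and your explicit negativity argument for (2) using that $K_X+\Delta$ is a pullback from $W$ over the isomorphism locus, are exactly the details the paper leaves implicit. One caution: the black box you ask for --- ``finitely generated relative section ring plus pseudo-effective over $W$ implies a good minimal model over $W$'' --- is \emph{not} what \cite{birkar11, MR3032329} prove and is not a theorem in that generality (it is essentially the abundance conjecture, since finite generation is automatic for klt pairs). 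What makes \cite[2.12]{MR3032329} applicable is precisely the stronger structural hypothesis of the lemma, namely $K_X+\Delta\simq f^*N$ with $g:Z\to W$ birational; since you have and use that hypothesis elsewhere in your argument, the proof goes through once the citation is invoked in that form rather than via the general principle you state.
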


\begin{proof} It follows from \cite[2.12]{MR3032329} that $(X,\Delta)$ has a 
good minimal model $X^+$ over $W$. So $X^+$ admits a morphism to $Z^+$. 
Furthermore, by \cite[2.9]{MR3032329}, we know that $X^+$ can be obtained by 
running a suitable $(K_X+\Delta)$-MMP  over $W$. Thus all steps are
isomorphisms over the  locus where $Z\map W$ is 
an isomorphism. 
\end{proof}

\begin{lem} \label{where.isom.lem} 
Let $g:\bigl(X, \Delta_1+\Delta_2\bigr)\to Z$
be a  relative logCY where the $\Delta_i$ are effective and $\Delta_2$
is $\q$-Cartier.  Let $\phi:X\map X'$ be a rational map
obtained by running a $(K_X+\Delta_1)$-MMP. Then 
$\phi^{-1}$ is an isomorphism on $X'\setminus \supp\Delta_2'$. 
\end{lem}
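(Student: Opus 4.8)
The plan is to realise the $(K_X+\Delta_1)$-MMP over $Z$ as a $(-\Delta_2)$-MMP and to check, one elementary step at a time, that each step changes only the support of the birational transform of $\Delta_2$. We may assume $(X,\Delta_1)$ is $\q$-factorial and dlt, this being the setting in which the $(K_X+\Delta_1)$-MMP is run (and the one occurring in our applications). Write $\phi$ as a composition of elementary steps $X=X_0\map X_1\map\cdots\map X_k=X'$ over $Z$, each of which is a divisorial contraction $\psi_i\colon X_i\to X_{i+1}$ or a flip $\psi_i\colon X_i\map X_{i+1}$ over some $Y_i$, and write $\Delta_{j,i}$ for the birational transform of $\Delta_j$ on $X_i$. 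The first point I would record, by induction on $i$, is that $K_{X_i}+\Delta_{1,i}+\Delta_{2,i}$ is $\q$-linearly trivial over $Z$ and that $\Delta_{2,i}$ is $\q$-Cartier: this is clear for $i=0$ since $(X,\Delta_1+\Delta_2)$ is relative logCY and $\Delta_2$ is $\q$-Cartier, and it propagates because the extremal ray $R_i$ contracted by $\psi_i$ is spanned by curves mapping to a point of $Z$, with $(K_{X_i}+\Delta_{1,i})\cdot R_i<0$ as $K_X+\Delta_1$ is $\q$-linearly equivalent to $-\Delta_2$ over $Z$; hence $(K_{X_i}+\Delta_{1,i}+\Delta_{2,i})\cdot R_i=0$, the step $\psi_i$ is crepant for $(X_i,\Delta_{1,i}+\Delta_{2,i})$, and push-forward (divisorial case) or strict transform (flip case) preserves the relative $\q$-triviality by the relative contraction theorem. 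In particular $\Delta_{2,i}=(K_{X_i}+\Delta_{1,i}+\Delta_{2,i})-(K_{X_i}+\Delta_{1,i})$ stays $\q$-Cartier since $(X_i,\Delta_{1,i})$ stays dlt.

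Next I would show, step by step, that $\psi_i^{-1}$ is an isomorphism over $X_{i+1}\setminus\supp\Delta_{2,i+1}$. In the divisorial case, let $E_i$ be the contracted prime divisor. Combining the MMP negativity inequality $a(E_i,X_i,\Delta_{1,i})<a(E_i,X_{i+1},\Delta_{1,i+1})$ with the crepancy equality $a(E_i,X_i,\Delta_{1,i}+\Delta_{2,i})=a(E_i,X_{i+1},\Delta_{1,i+1}+\Delta_{2,i+1})$ established above, and subtracting, one gets $\mult_{E_i}\psi_i^{*}\Delta_{2,i+1}=a(E_i,X_{i+1},\Delta_{1,i+1})-a(E_i,X_{i+1},\Delta_{1,i+1}+\Delta_{2,i+1})>\coeff_{E_i}\Delta_{2,i}\geq0$, so $E_i\subset\supp\psi_i^{*}\Delta_{2,i+1}=\psi_i^{-1}\bigl(\supp\Delta_{2,i+1}\bigr)$ and hence $\psi_i(E_i)\subset\supp\Delta_{2,i+1}$; since $\psi_i^{-1}$ is an isomorphism away from $\psi_i(E_i)$, this settles the divisorial case. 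In the flip case, any curve $C^{+}$ contracted by $X_{i+1}\to Y_i$ satisfies $(K_{X_{i+1}}+\Delta_{1,i+1})\cdot C^{+}>0$ while $(K_{X_{i+1}}+\Delta_{1,i+1}+\Delta_{2,i+1})\cdot C^{+}=0$, so $\Delta_{2,i+1}\cdot C^{+}<0$, which forces $C^{+}\subset\supp\Delta_{2,i+1}$; as the exceptional locus of $X_{i+1}\to Y_i$ is covered by such curves, it lies in $\supp\Delta_{2,i+1}$, and again $\psi_i^{-1}$ is an isomorphism off it.

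Finally I would compose the steps: given $x'\in X'\setminus\supp\Delta_2'$, the map $\psi_k^{-1}$ takes a neighbourhood of $x'$ isomorphically onto a neighbourhood of $x_{k-1}:=\psi_k^{-1}(x')$, and since $\Delta_{2,k-1}$ corresponds to $\Delta_{2,k}=\Delta_2'$ under this local isomorphism, $x_{k-1}\notin\supp\Delta_{2,k-1}$; iterating over $\psi_{k-1}^{-1},\dots,\psi_1^{-1}$ shows $\phi^{-1}$ is a local isomorphism at $x'$. The place where I expect to need genuine care is the first paragraph --- propagating the relative $\q$-triviality of $K_{X_i}+\Delta_{1,i}+\Delta_{2,i}$ and the $\q$-Cartier property of $\Delta_{2,i}$ along the MMP --- together with the standard but slightly fiddly discrepancy bookkeeping in the divisorial case; the flip case and the composition are immediate.
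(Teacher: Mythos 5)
Your discrepancy computations and the flip/divisorial case analysis are sound as far as they go, but the opening line ``We may assume $(X,\Delta_1)$ is $\q$-factorial and dlt'' is an added hypothesis, not a reduction, and it fails in one of the two places where the lemma is actually invoked. In the proof of Theorem \ref{p-bir} the lemma is applied not only to the MMP on the $\q$-factorial dlt model $\tilde X$ (where your argument works), but also to the $(Z,J+B^{\rm h})$-MMP on the base of the canonical bundle formula; there $Z$ is \emph{not} $\q$-factorial, and, as explained in Paragraph \ref{mmp.not.pseff.say}, the steps of that MMP are not divisorial contractions and flips but extremal contractions $g_i:Z_i\to W_i$ followed by the relative canonical modification of $(W_i,(g_i)_*\Theta_i)$. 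Your case division simply does not cover such a step, and without $\q$-factoriality you also lose the automatic $\q$-Cartierness of $\Delta_{2,i}$ on the intermediate models that your discrepancy bookkeeping needs --- exactly the point you flagged as requiring ``genuine care.''

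The repair is to reduce to a single step taken in the general form: the extremal contraction $g:X\to W$ of a $(K_X+\Delta_1)$-negative ray followed by $X':=\proj_W\sum_{m\ge 0}\o_W\bigl(\rdown{-mg_*\Delta_2}\bigr)$, which uniformly covers divisorial contractions, flips, and the canonical-modification steps over a non-$\q$-factorial base. Then (i) any curve $C'$ contracted by $g':X'\to W$ satisfies $\bigl(C'\cdot(K_{X'}+\Delta_1')\bigr)>0$, hence $\bigl(C'\cdot\Delta_2'\bigr)<0$ (note $\Delta_2'$ is $\q$-Cartier because $K_{X'}+\Delta_1'$ is $g'$-ample and $K_{X'}+\Delta_1'+\Delta_2'$ is pulled back from $Z$), so $\ex(g')\subset\supp\Delta_2'$; and (ii) over a point $w\in W$ where $g'$ is a local isomorphism but $g$ is not, there is a $g$-exceptional divisor mapping onto a set containing $w$, and since $\Delta_2\equiv_g-(K_X+\Delta_1)$ is $g$-ample while a nonzero effective exceptional divisor cannot be relatively nef, one concludes $w\in\supp(g_*\Delta_2)$. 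This two-part argument subsumes both of your cases and is the form in which the lemma is actually needed.
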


Proof. It is enough to check this when $\phi$ is  a single step of the MMP.
So assume that $\phi$ corresponds to the extremal ray 
$R\subset \overline{NE}(X/Z)$. 
Let $g:X\to W$ denote the corresponding contraction
and set
$$
X':= \proj_W  \sum_{m\geq 0}\o_W\bigl(\rdown{mK_W+mg_*\Delta_1}\bigr)=
 \proj_W  \sum_{m\geq 0}\o_W\bigl(\rdown{-mg_*\Delta_2}\bigr).
$$
 If a curve  $C'\subset X'$  is contracted by $g'$ then
$\bigl(C'\cdot \Delta'_2\bigr)<0 $, thus
$\ex(g')\subset \supp \Delta'_2$. Therefore
$g'$ is an isomorphism on $X'\setminus  \supp\Delta_2'$.

Pick $w\in W$. If $g'$ is a local isomorphism over $w$ but $g$ is not
then there is a $g$-exceptional divisor $E$ such that $w\in g(E)$. 
Let $C\subset X$ be an irreducible
curve such that $[C]\in R$. Then 
$\bigl(C\cdot (K_X+\Delta_1)\bigr)<0$ hence
$\bigl(C\cdot \Delta_2\bigr)>0$. In particular $E\subset \supp  \Delta_2$.
An effective exceptional divisor can not be relatively nef, thus
$w\in \supp(g_*\Delta_2)$. Therefore 
$\phi^{-1}$ is an isomorphism on $X'\setminus \supp\Delta_2'$. 
\qed


\medskip

\begin{lem} \label{gen.amp.to.amp.lem}  
Let $g:(X, \Delta)\to Z$
be a relative logCY and  $A$  a $\q$-Cartier $\z$-divisor
on $X$. Let $Z^0\subset Z$ be an open subset such that 
\begin{enumerate}
\item the restriction of $|A|$ to $X_0:=g^{-1}(Z_0)$ is base point free and big,
\item   no lc center of $(X, \Delta) $
is contained in $g^{-1}(Z\setminus Z^0)$. 
\end{enumerate}
Then there is a relative logCY
$g':(X', \Delta')\to Z$ and a crepant birational contraction map
$\phi:(X, \Delta)\map (X', \Delta') $  such that $\phi_*A$ is  $q'$-ample.
\end{lem}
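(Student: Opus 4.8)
The plan is to run a relative MMP on $X$ with a boundary that makes $A$ positive, using the hypotheses to control the lc centers so that crepancy is preserved. More precisely, I would first observe that since $|A|$ is base point free on $X_0 = g^{-1}(Z_0)$, there is some $0 < \epsilon \ll 1$ and an effective $\q$-divisor $A' \sim_{\q} \epsilon A$ such that $A'$ does not contain any lc center of $(X,\Delta)$ and $(X,\Delta + A')$ is still log canonical (indeed dlt if we choose $A'$ general in a suitable linear system, thanks to (1) and Bertini on $X_0$). The hypothesis (2) is what guarantees that adding $A'$ does not create problems along $g^{-1}(Z \setminus Z^0)$: every lc center maps into $Z^0$ where $|A|$ behaves well.

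The main step is then to run the $(K_X + \Delta + A')$-MMP over $Z$. Since $K_X + \Delta \simq 0$ over $Z$ (relative logCY), this is the same as running the $A'$-MMP over $Z$, i.e.\ we are running MMP with scaling to make $A'$ (equivalently $A$) relatively nef, and ultimately relatively semi-ample and big on the image. Because $|A|$ is big on $X_0$, the divisor $K_X + \Delta + A'$ is not relatively pseudo-effective-to-zero in a degenerate way over $Z^0$; the relevant finiteness/termination comes from \cite{bchm} applied relatively over $Z$ (or from the finite generation of $\bigoplus_m g_* \o_X(\rdown{m(K_X+\Delta+A')})$, which over $Z^0$ is the Proj giving the big base-point-free model). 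I would take $(X', \Delta')$ to be the resulting relative minimal model, with $\phi: (X,\Delta) \map (X', \Delta')$ the composite of the steps. Each step of this MMP contracts an extremal ray $R$ with $(R \cdot A') < 0$, hence $(R \cdot (K_X + \Delta)) = 0$, so every step is $(K_X+\Delta)$-trivial: this is exactly why $\phi$ is crepant and why $(X', \Delta' := \phi_* \Delta)$ is again a relative logCY over $Z$ with $K_{X'} + \Delta' \simq 0$ over $Z$. Moreover, since the $A'$-negative extremal rays never lie in an lc center (as $A'$ avoids all lc centers by construction, so $(R \cdot A') < 0$ forces $R$ to move $A'$, and by (2) this happens over $Z^0$), the map $\phi$ is a birational contraction — no divisor is extracted by $\phi^{-1}$, and it restricts to an isomorphism near the lc centers, so $\phi$ is thrifty in the sense of Definition \ref{crep.eq.def.1}. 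At the end $\phi_* A' \sim_\q \epsilon \phi_* A$ is relatively nef and big and base point free over $Z^0$; running a final contraction (the relative ample model, or simply noting the minimal model is already the canonical model here) makes $\phi_* A$ relatively ample, giving the required $q' = g'$-ample property.

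The technical care needed — and the step I expect to be the main obstacle — is ensuring that the MMP can be run at all in this mildly non-standard setting: $\Delta$ need not make $(X,\Delta)$ klt (it is only dlt/lc), and we want the MMP over a base $Z$ that may not be $\q$-factorial. This is handled as in Paragraph \ref{mmp.not.pseff.say} and Lemma \ref{lift.mmp.rel.logcy.lem}: one perturbs $\Delta$ slightly to a klt boundary $\Delta_\delta \simq \Delta$ (possible since $(X,\Delta)$ is dlt and we can trade coefficient-$1$ components for an ample perturbation, or pass to a $\q$-factorial dlt modification first), runs the klt MMP with scaling, and checks that each resulting step is also a step of the $(K_X + \Delta + A')$-MMP; the finiteness of models from \cite{bchm} gives termination. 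The base-change subtlety over non-$\q$-factorial $Z$ is exactly the situation covered by Lemma \ref{lift.mmp.rel.logcy.lem}, which lets us replace $Z$ by the relevant $\proj$ and lift. Once the MMP is set up, verifying crepancy, the birational-contraction property, and the thrifty/isomorphism-near-lc-centers conclusion are all formal consequences of the $(K_X+\Delta)$-triviality of each step together with hypothesis (2), so the real content is packaging the MMP correctly.
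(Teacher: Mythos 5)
Your proposal is correct and follows essentially the same route as the paper: perturb $\Delta$ by $\epsilon$ times a general member of $|A|$, which avoids all lc centers by hypotheses (1) and (2), then pass to the relative canonical model, where the relative numerical triviality of $K_X+\Delta$ forces $\phi_*A$ to become relatively ample and each step to be crepant. The paper merely packages the MMP more succinctly, first taking a $\q$-factorial dlt modification and then invoking the existence of the relative canonical model from \cite{MR3032329}, rather than spelling out the steps of the MMP with scaling as you do.
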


Proof.  Let $\pi\colon \tilde X\to X$ be a $\mathbb{Q}$-factorial dlt modification of $(X,\Delta)$ with $K_{\tilde{X}}+\tilde \Delta=\pi^*(K_X+\Delta)$.
Let  $D\in \pi^*|A|$ 
  a general member.  Then $D$ does not contain any of the log canonical centers
of $( \tilde X,  \tilde \Delta)$, hence $( \tilde X, \tilde \Delta+\epsilon D)$
is also dlt for $0<\epsilon\ll 1$. 
By \cite{MR3032329} it has a relative  canonical model 
$( X',  \Delta'+\epsilon D')$. Then $D'$ is $g'$-ample since $K_{X'}+\Delta'$ is numerically $g'$-trivial and  $D'\sim_{g} \phi_*A$. \qed
\medskip

\begin{lem}\label{subcomplex.lemma}
Let $(Y,\Delta)$ be a $\q$-factorial dlt pair and   
${\mathcal C}\subset \DMR(Y,\Delta)$ a closed subcomplex 
that contains all the vertices.
Let
$W_C\subset Y$ be the union of the strata
corresponding to those simplices that are not contained in 
${\mathcal C}$.  

Then there is  a $\q$-factorial dlt pair  $(Y_C,\Delta_C)$ and
a proper, crepant, birational morphism  $g:(Y_C,\Delta_C)\to (Y,\Delta)$
such that 
\begin{enumerate}
\item   $g$ is an isomorphism over $Y\setminus W_C$,
\item all the  exceptional divisors have discrepancy $-1$ and
\item $\DMR\bigl(Y_C, g^{-1}_*\Delta\bigr)={\mathcal C}$.
\end{enumerate}
\end{lem}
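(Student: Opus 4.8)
The plan is to realize $\mathcal C$ in a single crepant modification, by extracting divisors obtained from blowing up the strata that correspond to the \emph{minimal} simplices of $\DMR(Y,\Delta)$ lying outside $\mathcal C$. Since $\mathcal C$ is closed, the set of simplices not in $\mathcal C$ is upward closed in the face poset, so it has a well-defined set $\sigma^{(1)},\dots,\sigma^{(p)}$ of minimal elements; because $\mathcal C$ contains all vertices, $\dim\sigma^{(j)}\ge 1$ for each $j$, and a simplex $\tau$ lies in $\mathcal C$ exactly when no $\sigma^{(j)}$ is a face of $\tau$. Write $W_j\subseteq Y$ for the stratum of $\Delta^{=1}$ corresponding to $\sigma^{(j)}$; it is a log canonical centre of $(Y,\Delta)$, and $W_C=\bigcup_j W_j$, since each $W_j$ is itself a non-$\mathcal C$ stratum while every non-$\mathcal C$ stratum $W_\tau$ sits inside $W_j$ whenever $\sigma^{(j)}\subseteq\tau$.

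\emph{Construction.} Let $v_j$ be the divisorial valuation given by blowing up the ideal sheaf of $W_j$. Computing at the generic point of $W_j$, where $(Y,\Delta)$ is snc and $W_j$ is a connected component of $D_0\cap\cdots\cap D_{k_j}$ with $k_j=\dim\sigma^{(j)}$ and each $\coeff_{D_i}\Delta=1$, one finds $a(v_j,Y,\Delta)=(\codim W_j-1)-(k_j+1)=-1$, so every $v_j$ is an lc place of the dlt pair $(Y,\Delta)$. By \cite[1.38]{kk-singbook} there is a $\q$-factorial dlt pair $(Y_C,\Delta_C)$ and a proper crepant birational morphism $g\colon(Y_C,\Delta_C)\to(Y,\Delta)$ extracting precisely $v_1,\dots,v_p$; let $E_j$ be the prime divisor realizing $v_j$, so that $\Delta_C=g^{-1}_*\Delta+\sum_jE_j$. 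Every $g$-exceptional divisor is one of the $E_j$ and has discrepancy $-1$, which gives (2); and $g$ is an isomorphism over $Y\setminus\bigcup_jc_Y(v_j)=Y\setminus\bigcup_jW_j=Y\setminus W_C$, which gives (1).

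\emph{The dual complex.} The pair $(Y_C,g^{-1}_*\Delta)$ is again $\q$-factorial and dlt, because decreasing the boundary of a dlt pair preserves the dlt property (its lc centres form a subset of the old ones, since discrepancies can only increase); hence $\DMR(Y_C,g^{-1}_*\Delta)=\D\bigl((g^{-1}_*\Delta)^{=1}\bigr)$ by \cite[Thm.3]{dkx}. This dual complex is a crepant-birational invariant by Theorem \ref{fkx.thm.thm}, so it may be computed on any crepant model of $(Y,\Delta)$ extracting exactly $v_1,\dots,v_p$, and near each $W_j$ we may take the honest blow-up of $W_j$, where everything is snc. Blowing up $W_j=D_0\cap\cdots\cap D_{k_j}$ there makes $\tilde D_0,\dots,\tilde D_{k_j}$ have empty common intersection while preserving every other stratum of $\Delta^{=1}$ and adding the divisor $E_j$; hence it kills exactly the strata whose simplex contains $\sigma^{(j)}$ as a face. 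Carrying out all $p$ blow-ups, the strata of $(g^{-1}_*\Delta)^{=1}$ are precisely the strata of $\Delta^{=1}$ whose simplex contains no $\sigma^{(j)}$ — that is, exactly the strata indexed by $\mathcal C$ — since the newly created strata all involve some $E_j$ and so contribute nothing to $(g^{-1}_*\Delta)^{=1}$. Therefore $\DMR(Y_C,g^{-1}_*\Delta)=\mathcal C$, which is (3).

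\emph{Main obstacle.} The delicate point is the last step, in two respects: that \cite[1.38]{kk-singbook} can be arranged to extract \emph{precisely} the $v_j$ and nothing more — if extra lc places were to appear, their centres would lie in some $W_j$, and one would have to check that they do not change which strict transforms $\tilde D_i$ meet — and that the snc local blow-up picture genuinely computes the global $\D\bigl((g^{-1}_*\Delta)^{=1}\bigr)$, given that $(Y,\Delta)$ need not be snc at non-generic points of $W_j$. Both are handled by the crepant-birational invariance of $\D\bigl((g^{-1}_*\Delta)^{=1}\bigr)$ (Theorem \ref{fkx.thm.thm}), which permits passing to the blow-up model, together with the fact that $Y$ is smooth along the generic point of each $W_j$ and that each $W_j$ is normal by adjunction; any remaining bad points of $W_j$ can be absorbed into further crepant blow-ups that leave the dual complex unchanged.
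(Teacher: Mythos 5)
Your combinatorial strategy --- extract only the lc places over the strata corresponding to the \emph{minimal} non-$\mathcal C$ simplices --- is sound in the snc case and is genuinely different from the paper's proof, which blows up \emph{all} non-$\mathcal C$ strata (deepest first) after passing to a thrifty resolution and then contracts the unwanted divisors by an MMP. In the general dlt case, however, your argument has a gap that you flag but do not close. The tool you invoke, \cite[1.38]{kk-singbook}, produces a crepant dlt modification extracting the prescribed $v_j$ \emph{and possibly further divisors of discrepancy $-1$}; this is exactly how the paper itself uses that result in Lemma \ref{factoring.crep.maps} and Corollary \ref{p-bir-cor}. Any such extra lc place has centre equal to some stratum of $\Delta^{=1}$, but nothing forces that stratum to lie in $W_C$: it can be a stratum $W_\tau$ with $\tau\in\mathcal C$, for instance a deep stratum along which the dlt pair fails to be snc. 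That already destroys (1), and extracting a divisor over such a $W_\tau$ can separate the components of $g^{-1}_*\Delta$ passing through it and delete $\tau$ from $\D\bigl((g^{-1}_*\Delta)^{=1}\bigr)$, destroying (3) as well. Your proposed remedy (``their centres would lie in some $W_j$'') is asserted, not proved, and is not true in general.

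The second soft spot is the verification of (3). Theorem \ref{fkx.thm.thm} compares dual complexes of \emph{crepant birational} pairs; it applies to $(Y_C,\Delta_C)$, whose dual complex is that of $(Y,\Delta)$, but not to the reduced-boundary pair $(Y_C,g^{-1}_*\Delta)$, which is not crepant birational to $(Y,\Delta)$. So ``compute on any crepant model extracting exactly the $v_j$'' is not licensed by the cited theorem; you would need a separate argument that two such models, isomorphic in codimension one, induce the same subcomplex on the strict-transform boundary. The paper sidesteps both problems by first taking a thrifty resolution (all of whose exceptional divisors have discrepancy $>-1$, since an lc place of a dlt pair has centre equal to a stratum, over whose generic point a thrifty resolution is an isomorphism), performing the explicit iterated smooth blow-ups in the resulting snc pair, and only then running a minimal model program that contracts precisely the discrepancy $>-1$ divisors. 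If you replace the appeal to \cite[1.38]{kk-singbook} by that construction --- or prove a refined extraction statement that is an isomorphism outside $W_C$ and extracts no further lc places --- your argument goes through.
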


 \begin{proof}
Assume first that $(Y,\Delta)$  is a simple normal crossing pair.
Then the process is straightforward. First we blow up the 
strata corresponding the  maximal dimensional simplices   not contained in 
${\mathcal C}$, then the strata corresponding to codimension 1  simplices 
  not contained in 
${\mathcal C}$ and so on.

In general, first we take a thrifty resolution 
$(Y_1,\Delta_1)\to(Y,\Delta)$ 
as in \cite[Sec.2.5]{kk-singbook} where  $\Delta_1 $ is the birational 
transform of  $\Delta $.
 Then   $(Y_1,\Delta_1)$ is a
simple normal crossing pair, thus we can apply the previous argument
to get $(Y_2,\Delta_2)\to(Y_1,\Delta_1)\to(Y,\Delta)$
  where  $\Delta_2 $ is the birational 
transform of  $\Delta_1 $.
Finally we take a minimal model for 
  $(Y_2,\Delta_2+E)$, where $E$ is the exceptional divisor of 
$Y_2\to Y$. As in \cite[1.35]{kk-singbook}, this removes the
exceptional divisors whose discrepancy is $>-1$.
\end{proof}

In general  the pair $(\bar X, \bar \Delta)$ in Theorem \ref{p-bir}
is neither unique 
nor dlt and for some applications we need a carefully chosen dlt modification.

Start with a projective logCY pair $(X, \Delta)$.
 After  extracting  divisors with discrepancy $-1$ 
 we may assume  that $(X, \Delta) $ is dlt and $\q$-factorial.
 Then apply Theorem \ref{p-bir} to get $(\bar X, \bar \Delta)$ and
$q:\bar X\to Z$.
Finally let $g:(X_s, \Delta_s)\to (\bar X, \bar \Delta)$ be a 
$\q$-factorial, dlt, crepant modification that extracts
precisely all $\phi$-exceptional divisors and possibly some other
divisors with discrepancy $-1$; see \cite[1.38]{kk-singbook}.
 Then
$g^{-1}\circ \phi$ is a crepant birational map
and we obtain the following.

\begin{cor}\label{p-bir-cor}
 Every  logCY pair $(X, \Delta)$ is crepant birational
to  a $\q$-factorial, dlt, logCY pair $(X_s, \Delta_s)$
with crepant birational maps   
$$
\psi:=g^{-1}\circ \phi: (X, \Delta)\stackrel{\phi}{\map} (\bar X, \bar \Delta)
\stackrel{g}{\leftarrow} (X_s, \Delta_s) 
$$ 
such that there is a morphism
$q_s:X_s\to Z$ with the following properties.
\begin{enumerate}
\item Every log canonical  center of $(X_s, \Delta_s) $ dominates $Z$,
\item $\Delta_s^{=1}\subset g^{-1}(\bar \Delta^{=1})$,
\item $g^{-1}(\bar \Delta^{=1})$ fully supports a $q_s$-big and $q_s$-semi-ample divisor,
\item every prime divisor $E_s\subset X_s$
has non-empty intersection with $g^{-1}(\bar \Delta^{=1})$.
\item $\psi^{-1}$ is a crepant, birational contraction and $ E_s\subset \Delta_s^{=1}$
for every $\psi^{-1}$-exceptional divisor  $ E_s\subset  X_s$. \qed
\end{enumerate}
\end{cor}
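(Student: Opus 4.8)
The plan is to carry out exactly the construction sketched in the paragraph preceding the statement and then to verify each of the five items by bookkeeping with discrepancies and log canonical centers. First I would reduce to the case where $(X,\Delta)$ is $\q$-factorial and dlt: after extracting all divisors of discrepancy $-1$ by \cite[1.38]{kk-singbook} the dual complex is unchanged by Theorem \ref{fkx.thm.thm}, and the disconnected case is already settled in Paragraph \ref{disconnect.say}, so one may assume $\supp\Delta^{=1}$ connected and nonempty. Next I would invoke Theorem \ref{p-bir} to produce the crepant birational map $\phi\colon(X,\Delta)\map(\bar X,\bar\Delta)$ and the morphism $q\colon\bar X\to Z$ satisfying (\ref{p-bir}.1--4), and then choose, by \cite[1.38]{kk-singbook}, a $\q$-factorial dlt crepant modification $g\colon(X_s,\Delta_s)\to(\bar X,\bar\Delta)$ whose exceptional set is precisely the union of the $\phi$-exceptional divisors together with the divisors of discrepancy $-1$ forced by the process. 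Set $q_s:=q\circ g$ and $\psi:=g^{-1}\circ\phi$.

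For the verification, recall that $\bar X\setminus\bar\Delta^{=1}$ is klt and $\q$-factorial, as noted after Theorem \ref{p-bir}, so every log canonical center of $(\bar X,\bar\Delta)$ is contained in $\bar\Delta^{=1}$. Since $g$ is crepant, any $g$-exceptional divisor of discrepancy $-1$ has center an lc center of $(\bar X,\bar\Delta)$, hence lies over $\bar\Delta^{=1}$; combined with (\ref{p-bir}.4), which forces the center of any $\phi$-exceptional divisor into $\bar\Delta^{=1}$, this gives $\Delta_s^{=1}\subset g^{-1}(\bar\Delta^{=1})$, i.e. (\ref{p-bir-cor}.2). Item (\ref{p-bir-cor}.1) follows because $g$ crepant implies every lc center of $(X_s,\Delta_s)$ maps onto an lc center of $(\bar X,\bar\Delta)$, and those dominate $Z$ by (\ref{p-bir}.2), hence so do their preimages under $q_s$. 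For (\ref{p-bir-cor}.5): because all $\phi$-exceptional divisors survive on $X_s$ by construction, the map $\psi\colon X\map X_s$ contracts no divisor, so $\psi^{-1}$ is a birational contraction; its exceptional divisors are the $g$-exceptional divisors that are not $\phi$-exceptional (these have discrepancy $-1$, hence coefficient $1$ in $\Delta_s$) and the birational transforms of the $\phi^{-1}$-exceptional divisors, which lie in $\bar\Delta^{=1}$ by (\ref{p-bir}.3) and so keep coefficient $1$ on $X_s$ by crepancy. Thus every $\psi^{-1}$-exceptional divisor is a component of $\Delta_s^{=1}$.

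Item (\ref{p-bir-cor}.3) is then immediate: if $\bar H$ is the $q$-ample divisor with $\supp\bar H=\supp\bar\Delta^{=1}$ supplied by (\ref{p-bir}.1), then $g^*\bar H$ is $q_s$-semi-ample and $q_s$-big, and its support is exactly $g^{-1}(\bar\Delta^{=1})$. The remaining point (\ref{p-bir-cor}.4) is, I expect, the main obstacle. A prime divisor $E_s\subset X_s$ disjoint from $g^{-1}(\bar\Delta^{=1})$ is either $g$-exceptional, in which case its center on $\bar X$ lies in $\bar\Delta^{=1}$ by the discussion above (a contradiction), or the birational transform of a prime divisor $\bar E\subset\bar X$ disjoint from $\bar\Delta^{=1}$; over a general point $z\in Z$ such an $\bar E$ would meet the fibre inside the affine variety $\bar X_z\setminus\bar\Delta^{=1}_z$ (the complement of the support of the ample divisor $\bar H|_{\bar X_z}$), forcing $\bar E$ to be generically finite over $Z$, which one rules out using that every component of $\bar\Delta^{=1}$ is an lc center and hence dominates $Z$ by (\ref{p-bir}.2), together with the connectedness of $\supp\bar\Delta^{=1}$ arranged in Paragraph \ref{p-bir.pf.m}. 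The one delicate piece of bookkeeping in writing this up is to make sure the single choice of $g$ simultaneously delivers (\ref{p-bir-cor}.4) and (\ref{p-bir-cor}.5) — in particular that \cite[1.38]{kk-singbook} can be arranged to extract exactly the $\phi$-exceptional divisors — while everything else is a direct consequence of Theorem \ref{p-bir} and Theorem \ref{fkx.thm.thm}.
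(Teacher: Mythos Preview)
Your construction is exactly the one the paper gives in the paragraph preceding the corollary: pass to a $\q$-factorial dlt model, apply Theorem~\ref{p-bir} to obtain $\phi\colon(X,\Delta)\map(\bar X,\bar\Delta)$ and $q\colon\bar X\to Z$, then take a $\q$-factorial dlt crepant modification $g\colon(X_s,\Delta_s)\to(\bar X,\bar\Delta)$ extracting precisely the $\phi$-exceptional divisors together with any further divisors of discrepancy $-1$. The paper records the corollary with a \qed\ and does not spell out (1)--(5); your verifications of (1), (2), (3) and (5) are correct and more detailed than anything in the paper.

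Your treatment of (4), which you rightly flag as the delicate point, has a real gap. You correctly reduce to showing no prime divisor $\bar E\subset\bar X$ is disjoint from $\bar\Delta^{=1}$, and correctly observe that such an $\bar E$ lies in $\bar X\setminus\supp\bar H$, which is affine over $Z$, so $q|_{\bar E}$ is proper and affine, hence finite. But your proposed way of ruling this out---``using (\ref{p-bir}.2) together with the connectedness of $\supp\bar\Delta^{=1}$''---does not work: connectedness of $\bar\Delta^{=1}$ tells you nothing about a divisor sitting entirely outside it. The clean conclusion is a dimension count: finiteness forces $\dim Z\geq\dim\bar E=\dim\bar X-1$, while (\ref{p-bir}.2) lets you compute $\DMR$ on the generic fibre, giving $\dim\DMR(\bar X,\bar\Delta)\leq(\dim\bar X-\dim Z)-1\leq 0$. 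So (4) holds whenever $\dim\DMR\geq 1$, and in fact it can fail otherwise: for $(X,\Delta)=(\p^1\times E,\{0,\infty\}\times E)$ with $E$ elliptic, the construction returns $(\bar X,\bar\Delta)=(X,\Delta)$ with $Z=E$, and $\{1\}\times E$ is disjoint from $\bar\Delta^{=1}$. Since the paper only invokes (\ref{p-bir-cor}.4) under the standing hypothesis $\dim\DMR\geq 2$, this is harmless for the applications, but you should either add that hypothesis or replace your connectedness appeal by the dimension count when writing (4) up.
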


\section{The boundary of crepant birational pairs}

So far we have studied the dual complex of logCY pairs  $(X, \Delta)$. 
Some of the invariance results have their counterparts for
$\rdown{\Delta}$ as well.

\begin{thm} \label{boundary.coh.pi1.thm}     
Let $g:(X_1, \Delta_1)\map (X_2, \Delta_2)$ 
be a proper, crepant birational map
of dlt log pairs as in (\ref{crep.eq.def.1}). 
Assume that either the $\Delta_i$ are effective or
the $(X_i, \Delta_i) $ are simple normal crossing pairs.
Set $D_i:=\Delta^{=1}_i$.
Then
\begin{enumerate}
\item $H^i\bigl(D_1, \o_{D_1}\bigr)\cong 
H^i\bigl(D_2, \o_{D_2}\bigr)$ for every $i$ and 
\item 
$\pi_1\bigl(D_1\bigr)\cong \pi_1\bigl(D_2\bigr)$.
\end{enumerate}
\end{thm}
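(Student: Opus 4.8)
The plan is to reduce everything to the factorization of crepant birational maps provided by Lemma \ref{factoring.crep.maps}(4), so that it suffices to prove invariance under two elementary moves: a crepant birational \emph{morphism} $p\colon (X',\Delta')\to (X,\Delta)$ between dlt pairs, and a thrifty crepant birational \emph{isomorphism in codimension $1$}. For the morphism case, the key point is that $p$ extracts only divisors with discrepancy $-1$, so it restricts to a morphism $p\colon D'=\Delta'^{=1}\to D=\Delta^{=1}$; moreover $p_*\o_{D'}=\o_D$ and, since the relevant pairs are dlt (hence the strata of $D$ have rational singularities and the higher direct images are controlled), $R^ip_*\o_{D'}=0$ for $i>0$. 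This last vanishing is the one genuinely technical input; I would extract it from the description of $\diffg_W\Delta$ in Paragraph \ref{delt.defn.say} together with the fact that dlt strata are themselves dlt logCY pairs after adjunction, applying Kawamata--Viehweg/Koll\'ar-type vanishing stratum by stratum and assembling via the spectral sequence (\ref{top.coh.say}.2). Once this is in hand, the Leray spectral sequence gives $H^i(D',\o_{D'})\cong H^i(D,\o_D)$, proving (1) for morphisms. For $\pi_1$, the fibers of $p\colon D'\to D$ over a point of a stratum $W\subset D$ are, up to the combinatorics of which exceptional divisors lie over $W$, dual-complex-like and in any case connected with surjective $\pi_1$; I would combine a stratified van Kampen argument (as used in Paragraph \ref{lgfg.say}) with the fact that $\ex(p)\subset D'$ to conclude $\pi_1(D')\cong\pi_1(D)$.

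For the thrifty, codimension-$1$ move $\phi\colon (X_1',\Delta_1')\map (X_2',\Delta_2')$ from Lemma \ref{factoring.crep.maps}(4), the hypothesis is that $\phi$ is an isomorphism outside closed sets $Z_i$ of codimension $\geq 1$ containing no lc center; hence it restricts to an isomorphism $D_1'\setminus(Z_1\cap D_1')\cong D_2'\setminus(Z_2\cap D_2')$ where the removed loci are \emph{closed subsets of codimension $\geq 1$ inside $D_i'$ missing every stratum of $D_i'$} (this is exactly what thriftiness buys, since the strata of $D_i'$ are lc centers of $(X_i',\Delta_i')$). Because the $D_i'$ are snc (the dlt pairs having been resolved), deleting such a codimension-$\geq 1$ subset that avoids all strata does not change $H^i(D_i',\o_{D_i'})$ — one checks this again via the stratum spectral sequence, each $H^i(\cap_{j\in J}E_j,\o)$ being unchanged by removing a codimension-$\geq 1$ closed subset missing all further strata — nor does it change $\pi_1$, by a normal-crossings van Kampen / purity argument. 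This yields (1) and (2) for $\phi$.

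Chaining the two moves along the factorization $(X_1,\Delta_1)\leftarrow(X_1',\Delta_1')\map(X_2',\Delta_2')\to(X_2,\Delta_2)$ gives the theorem. I would note that the hypothesis ``the $\Delta_i$ effective, or the $(X_i,\Delta_i)$ snc'' is needed precisely so that Lemma \ref{factoring.crep.maps} applies and so that the intermediate common resolutions in its proof can be taken with $\Delta''$ effective along the relevant components; the snc case is where one does not have effectivity but can work directly on the dual complexes. The main obstacle I anticipate is the vanishing $R^ip_*\o_{D'}=0$ and the corresponding statement for deleting codimension-$\geq 1$ strata-avoiding subsets: these require knowing that the strata of $\Delta^{=1}$ for a dlt logCY pair have Du~Bois (indeed rational) singularities and that the adjunction pairs $(W,\diffg_W\Delta)$ inherit enough positivity to run vanishing — all available in the literature cited (\cite{kk-singbook}, \cite{MR1094468, KMM92a}), but needing to be patched together carefully across the stratification rather than applied in one stroke.
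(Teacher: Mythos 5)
There is a genuine gap, concentrated in your treatment of the ``isomorphism in codimension $1$'' piece of the factorization from Lemma \ref{factoring.crep.maps}(4). You claim that deleting from $D_i'$ a closed subset of codimension $\geq 1$ that misses all strata changes neither $H^i\bigl(D_i',\o_{D_i'}\bigr)$ nor $\pi_1(D_i')$. For coherent cohomology this is false: $H^i(V\setminus Z,\o)$ bears no relation to $H^i(V,\o)$ when $Z$ is a divisor (already $H^0(\p^1\setminus\{pt\},\o)$ is infinite dimensional), and since the indeterminacy loci $Z_i$ have codimension $\geq 2$ only in the ambient $X_i'$, the traces $Z_i\cap D_i'$ can perfectly well be divisors in $D_i'$. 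Coherent cohomology simply cannot be compared by restricting to a common open subset; one has to push forward from a common model and prove a vanishing of higher direct images. For $\pi_1$ the same issue appears in a milder form: removing a complex-codimension-$1$ subset of $D_i'$ gives only a surjection $\pi_1(D_i'\setminus Z_i)\onto\pi_1(D_i')$, so you obtain one group surjecting onto both $\pi_1(D_1')$ and $\pi_1(D_2')$, not an isomorphism between them. (A smaller inaccuracy: in Lemma \ref{factoring.crep.maps}(4) the $(X_i',\Delta_i')$ are only $\q$-factorial dlt, so the $D_i'$ need not be snc as you assert.)

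The paper's route avoids this step entirely. It reduces both sides to simple normal crossing models by a \emph{thrifty resolution} $p:(Y,\Delta_Y)\to(X,\Delta)$, proving $H^i(D,\o_D)\cong H^i(\rdown{\Delta},\o_{\rdown{\Delta}})$ for $D=p^{-1}_*\Delta^{=1}$ via the sequence $0\to\o_Y(-D)\to\o_Y\to\o_D\to 0$ together with $R^ip_*\o_Y=0$ (rational singularities) and $R^ip_*\o_Y(-D)=0$ from \cite[2.87]{kk-singbook}, and proving the $\pi_1$-statement by showing every fiber of $p|_D$ is simply connected (via \cite[7.5]{k-shaf} and an induction with van Kampen in the non-normal case). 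Two snc models are then compared by weak factorization into smooth blow-ups with centers having normal crossings with the boundary, for which both assertions are immediate. Your ``crepant morphism'' step is in the same spirit as this resolution step, though you would still need to pin down the vanishing $R^ip_*\o_{D'}=0$ rather than gesture at a stratum-by-stratum argument; but the flop-like step, as you have set it up, does not close, and the effective/snc hypothesis is used for the vanishing and the weak factorization, not merely to invoke Lemma \ref{factoring.crep.maps}.
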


Proof.  
In the  simple normal crossing case
 $g$ can be factored as a composite of smooth blow-ups and 
blow-downs whose centers have simple normal crossings with  $  \supp\Delta$.
For any one such blow-up the claimed isomorphisms 
(\ref{boundary.coh.pi1.thm}.1--2) are clear. 

In order to reduce to this case 
let $ (X, \Delta)$ be a dlt log pair with $\Delta$ effective; thus
$\rdown{\Delta}=\Delta^{=1}$. Let
$p:(Y, \Delta_Y)\to (X, \Delta)$ a thrifty resolution as in 
 \cite[Sec.2.5]{kk-singbook}.  Set $D:=p^{-1}_*\Delta^{=1}$.
Consider the exact sequence
$$
0\to \o_{Y}(-D)\to \o_{Y}\to
 \o_{D}\to 0.
$$
Note that $R^ip_* \o_{Y}=0$ for $i>0$ since $X$ has rational singularities
and  \cite[2.87]{kk-singbook} implies that 
 $R^ip_* \o_{Y}(-D)=0$ for $i>0$. 
Thus $R^ip_*  \o_{D}=0$ for $i>0$ and
$p_*  \o_{D}= \o_{\rdown{\Delta}}$. Therefore
 $H^i\bigl(D, \o_{D}\bigr)\cong 
H^i\bigl(\rdown{\Delta}, \o_{\rdown{\Delta}}\bigr)$.

It remains to establish that (\ref{boundary.coh.pi1.thm}.2) holds for $p$.
For this  it is enough to prove that
every fiber of  $g:=p|_D:D\to \rdown{\Delta}$ 
is simply connected.
If $\rdown{\Delta}$ is normal, this follows from
\cite[7.5]{k-shaf} in the algebraic case and \cite{tak} in general.
The proof of \cite[7.5]{k-shaf} works in the non-normal case;
most likely the same applies to \cite{tak} but we have not checked the details.
One can, however, go around this problem as follows. 

Fix a point $x\in \rdown{\Delta}$ and let $S_1,\dots, S_r\subset X$ be the 
irreducible components of
$\rdown{\Delta}$ passing through $x$ with birational transforms
 $D_1,\dots, D_r\subset Y$. 

We use induction on $r$ and on the 
dimension. If $r=1$ then, as we noted,  $g^{-1}(x)$ is  simply connected.
For $r>1$, let $h$ denote the restriction of $g$ to $D_r$
and $g_{r-1}$ the restriction of $g$ to  $D_1\cup\cdots\cup D_{r-1}$.
Both  $h^{-1}(x)$ and  $g_{r-1}^{-1}(x)$ are  simply connected by induction.
Their intersection is isomorphic to the fiber we get from the
lower dimensional  thrifty resolution
$$
\bigl(D_r, \diffg_{D_r}\Delta_Y\bigr)\to \bigl(S_r, \diffg_{S_r}\Delta\bigr),
$$
hence also simply connected. Thus $g^{-1}(x)$ is also  simply connected
by van~Kampen's theorem.
 \qed

\section{Examples}\label{sec.examples}

\begin{exmp} Start with  $X_1=\p^1$ and $\Delta=(0{:}1)+(1{:}0)$.
Let $\tau_1$ be the involution  $(x{:}y)\mapsto (y{:}x)$.
Set $(X_n, \Delta_n):= (X_1, \Delta_1)^n$,
$\tau_n$ the involution $(\tau_1,\dots, \tau_1)$
and $(Y_n, \Delta^Y_n):=(X_n, \Delta_n)/(\tau_n)$.

Then $\DMR(X_n, \Delta_n)$ is the boundary of the cube of dimension $n$, thus
PL-homeomorphic to ${\mathbb S}^{n-1}$ while
$\DMR(Y_n, \Delta^Y_n)\simeq \r\p^{n-1}$.
\end{exmp}

\begin{exmp} Let $ G$ be  a group of order $m$. 
It acts on $\p^{m-1}$ by permuting the coordinates.
Pick a general hyperplane and move it around by $G$ to get a
logCY pair  $(\p^{m-1}, \Delta_m)$.  The dual complex is the
boundary of the $(m-1)$-simplex,  thus
PL-homeomorphic to ${\mathbb S}^{m-2}$.

Next we take the quotient by $G$. The boundary consists of  1 divisor with
complicated self-intersections. It is thus better to take the
 barycentric subdivision first and then take the quotient. 
See \cite[Rem.10]{dkx} on how to do this with blow-ups to obtain
a dlt pair  $(X_m, \Delta_m)$.
The resulting quotient need not be dlt; such examples led to the
introduction of quotient-dlt pairs in  \cite[Sec.5]{dkx}.

The dual complex of the quotient is ${\mathbb S}^{m-2}/G$.
Note that usually $G$ has fixed points on ${\mathbb S}^{m-2}$.
The only exception occurs when $G$ is cyclic of prime order $m=p$.
In these cases
$\pi_1\bigl(\DMR(X_p, \Delta_p))\cong \z_p$.
\end{exmp}

\begin{exmp}\label{pi1.not.birinv.exmp}
Let $A$ be an Abelian surface and $X=A/(\pm)$ the corresponding
Kummer surface with minimal resolution $ X'$. Then
$X$ and $X'$ are crepant-birational. Note that $X'$  is smooth
and simply connected while  $\pi_1\bigl(X^{\rm sm}\bigr)$ is infinite.

There are similar examples involving only rational surfaces.
Start with $\p^1\times\p^1$ and an involution $\tau$  with 4 isolated fixed points. Set $X=(\p^1\times\p^1)/(\tau)$  with minimal resolution $ X'$.
In this example $X'$  is smooth
and simply connected while  $\pi_1\bigl(X^{\rm sm}\bigr)\cong \z/2$.
\end{exmp}

\begin{exmp}\label{big.nef.not.rc.exmp} Let $Y$ be a smooth
CY and $L$ a very ample line bundle. Set $X:=\p_Y(L+\o_Y)$
and $D, D'\subset X$ the 2 sections. 
Then
$(X, D+D')$ is a logCY.

Choose indexing such that $\o_X(D)|_{D}\cong L$.
Then $D$ is big and semi-ample. Thus $D+D'$ supports---but not fully supports---a big and semi-ample divisor and yet $X$ is not rationally connected.

The linear system  $|D|$ is base point free; let
$D_1,\dots, D_n$ be general members. Set
$\Delta:=\frac1{n}(D_1+\cdots + D_n)+D'$. Then
$(X,\Delta)$ is a log CY with a morphism
$X\to Y$ to  a CY  but, for $n\geq 3$,  $(X,\Delta)\to Y$
is not a product, not even locally analytically at the generic point.

This contrasts with the product theorem of \cite{k-lars}
for Calabi--Yau varieties without boundary divisors.
\end{exmp}

\begin{exmp} \label{join.and.product.exmp}
Let $(X_i, \Delta_i)$ be two logCY pairs.
The product
$$
(X_1, \Delta_1)\times (X_2, \Delta_2):=
\bigl(X_1\times X_2, X_1\times \Delta_2+\Delta_1\times X_2\bigr)
$$
is also logCY and
$$
\DMR\bigl((X_1, \Delta_1)\times (X_2, \Delta_2)\bigr)\simeq
\DMR(X_1, \Delta_1)* \DMR(X_2, \Delta_2)
$$
where $*$ denotes the join.
Thus if $\DMR(X_i, \Delta_i)\simeq {\mathbb S}^{n_i}/G_i$
then
$$
\DMR\bigl((X_1, \Delta_1)\times (X_2, \Delta_2)\bigr)\simeq
{\mathbb S}^{n_1+n_2+1}/G_1\times G_2.
$$
\end{exmp}

\def\cprime{$'$} \def\cprime{$'$} \def\cprime{$'$} \def\cprime{$'$}
  \def\cprime{$'$} \def\cprime{$'$} \def\dbar{\leavevmode\hbox to
  0pt{\hskip.2ex \accent"16\hss}d} \def\cprime{$'$} \def\cprime{$'$}
  \def\polhk#1{\setbox0=\hbox{#1}{\ooalign{\hidewidth
  \lower1.5ex\hbox{`}\hidewidth\crcr\unhbox0}}} \def\cprime{$'$}
  \def\cprime{$'$} \def\cprime{$'$} \def\cprime{$'$}
  \def\polhk#1{\setbox0=\hbox{#1}{\ooalign{\hidewidth
  \lower1.5ex\hbox{`}\hidewidth\crcr\unhbox0}}} \def\cdprime{$''$}
  \def\cprime{$'$} \def\cprime{$'$} \def\cprime{$'$} \def\cprime{$'$}
\providecommand{\bysame}{\leavevmode\hbox to3em{\hrulefill}\thinspace}
\providecommand{\MR}{\relax\ifhmode\unskip\space\fi MR }
\providecommand{\MRhref}[2]{%
  \href{http://www.ams.org/mathscinet-getitem?mr=#1}{#2}
}
\providecommand{\href}[2]{#2}

\medskip

\noindent JK: Princeton University, Princeton NJ 08544-1000

{\begin{verbatim} kollar@math.princeton.edu\end{verbatim}}
\medskip

\noindent  CX: Beijing International Center of Mathematics Research, 
Beijing, 100871, China  

{\begin{verbatim} cyxu@math.pku.edu.cn\end{verbatim}}

\end{document}